\newtheorem{theorem}{Theorem}[section]
\newtheorem{lemma}[theorem]{Lemma}
\newtheorem{fact}[theorem]{Fact}
\newtheorem{question}[theorem]{Question}
\newtheorem{proposition}[theorem]{Proposition}
\newtheorem{corollary}[theorem]{Corollary}
\newtheorem{claim}[theorem]{Claim}
\theoremstyle{definition}
\newtheorem{definition}[theorem]{Definition}
\newtheorem{remark}[theorem]{Remark}
\let \restr = \upharpoonright
\let \into = \longrightarrow
\let \sub = \subseteq
\let \elsub = \preccurlyeq
\let \a = \alpha
\let \b = \beta
\let \g = \gamma
\let \d = \delta
\let \l = \lambda
\let \k = \kappa
\let \m = \mu
\let \n = \nu
\let \p = \pi
\let \r = \rho
\let \t = \theta
\let \s = \sigma
\let \x = \xi
\let \o = \omega
\let \al = \aleph
\let \mtcl = \mathcal
\let \mtbb = \mathbb
\let \it = \item
\DeclareMathOperator{\dom}{dom}
\DeclareMathOperator{\range}{range}
\DeclareMathOperator{\cf}{cf}
\DeclareMathOperator{\ot}{ot}
\DeclareMathOperator{\cl}{cl}
\DeclareMathOperator{\ssup}{sup}
\DeclareMathOperator{\FA}{FA}
\DeclareMathOperator{\Lim}{Lim}
\DeclareMathOperator{\Ord}{\textsf{Ord}}
\DeclareMathOperator{\MM}{\textsf{MM}}
\DeclareMathOperator{\CB}{CB}
\DeclareMathOperator{\wCC}{wCC}
\DeclareMathOperator{\ZFC}{ZFC}\DeclareMathOperator{\NS}{NS}
\DeclareMathOperator{\MA}{\textsf{MA}}
\DeclareMathOperator{\PFA}{\textsf{PFA}}
\DeclareMathOperator{\CH}{\textsf{CH}}
\DeclareMathOperator{\Unif}{\textsf{Unif}}
\title{Generalizations of Martin's Axiom, weak square, weak Chang's Conjecture, and a forcing axiom failure}
\author[D. Asper\'o]{David Asper\'o}
\address{David Asper\'o, School of Mathematics, University of East Anglia, Norwich NR4 7TJ, UK}
\email{d.aspero@uea.ac.uk}
\author[N. Tananimit]{Nutt Tananimit}
\address{Nutt Tananimit, School of Mathematics, University of East Anglia, Norwich NR4 7TJ, UK}
\email{n.tananimit@uea.ac.uk}
\date{}
\begin{document}

\subjclass[2020]{03E50, 03E35, 03E05, 03E40}

\keywords{Weak square, generalizations of Martin's Axiom, forcing axiom failures, $\aleph_{1.5}$-c.c.\ with respect to families of models with additional properties, weak Chang's Conjecture}

\maketitle
\pagestyle{myheadings}\markright{Generalizations of Martin's Axiom, $\Box^*_{\o_1}$, $\lnot\wCC$, and a forcing axiom failure}\markleft{Generalizations of Martin's Axiom, $\Box^*_{\o_1}$, $\wCC$, and a forcing axiom failure}

\begin{abstract}
We prove that the forcing axiom $\MA^{1.5}_{\al_2}(\mbox{stratified})$ implies $\Box_{\omega_1, \o_1}$. 
Using this implication, we show that the forcing axiom $\MM_{\aleph_2}(\aleph_2\mbox{-c.c.})$ is inconsistent. We also derive weak Chang's Conjecture from $\MA^{1.5}_{\al_2}(\mbox{stratified})$ and use this second implication to give another proof of the inconsistency of $\MM_{\aleph_2}(\aleph_2\mbox{-c.c.})$.  \end{abstract}

\section{Introduction}

Jensen's square principle $\Box_\k$ at an infinite successor cardinal $\k^+$, as well as its natural weakenings $\Box_{\k, {<}\l}$, are very well studied principles in combinatorial set theory that can be naturally viewed as incompactness or anti-reflection principles. For example, and as is well-known, $\Box_\k$ implies that for every stationary $S\sub\k^+$ there is a stationary $S'\sub S$ which does not reflect (i.e., such that $S'\cap\a$ is non-stationary for every $\a<\k^+$). We recall the definition of these weakenings of $\Box_\k$. 

\begin{definition}
Given a cardinal $\k\geq\o_1$ and a cardinal $\l\leq\k^+$, $\Box_{\k, {<}\l}$ holds if and only if there is a sequence $(\mtcl C_\a\,;\,\a\in\Lim(\k^+))$ with the following properties.
\begin{enumerate}
\item For every $\a\in \Lim(\k^+)$, 
\begin{enumerate}
\item $\mtcl C_\a$ is a collection of clubs of $\a$,
\item $\ot(C)\leq\k$ for every $C\in\mtcl C_\a$, and
\item $|\mtcl C_\a|<\l$.
\end{enumerate}
\item (Coherence) For every $\a\in\Lim(\k^+)$, $C\in\mtcl C_\a$, and $\bar\a<\a$, if $\bar\a$ is a limit point if $C$, then $C\cap\bar\a\in \mtcl C_{\bar\a}$.  
\end{enumerate} 
\end{definition}

In the above definition, and throughout the paper, given an ordinal $\mu$, $\Lim(\mu)$ will denote the set of nonzero limit ordinals in $\m$. 

We also write $\Box_{\k, \l}$ for $\Box_{\k, {<}\l^+}$. We will be focusing our attention on the weakest form of $\Box_{\k, \l}$ for $\k=\o_1$, i.e., on $\Box_{\o_1, \o_1}$. This principle is also called \emph{weak square} and, by a well-known result of Jensen, is equivalent to the existence of a special $\al_2$-Aronszajn tree. 

Forcing axioms are reflection principles, between forcing extensions and the ground model, which tend to imply failures of square. For example, a well-known of Todor\v{c}evi\'{c} (\cite{Todorcevic}) is that the Proper Forcing Axiom ($\PFA$) implies that $\Box_\k$ fails for every uncountable cardinal $\k$.   

Given a cardinal $\k$ and a class $\mtcl K$ of partial orders, we denote the forcing axiom for $\mtcl K$ with respect to families of $\k$-many dense sets by $\FA_\k(\mtcl K)$. This is the assertion that for every $\mtbb P\in\mtcl K$ and every sequence $(D_i\,:\,i<\k)$ of dense subsets of $\mtbb P$ there is a filter $G\sub\mtbb P$ such that $G\cap D_i\neq\emptyset$ for all $i<\k$. 
We will say that \emph{$\FA_\k(\mtcl K)$ is the forcing axiom for $\mtcl K$ at $\k$}.

In \cite{genMA}, a certain forcing axiom $\MA^{1.5}_\k$ extending Martin's Axiom at $\k$ (i.e., $\MA_\k$) is introduced. It is proved in that paper that if $\CH$ holds, $\k$ is any closed enough cardinal, and a suitable diamond principle holds, then $\MA^{1.5}_\k$ holds in some cardinal-preserving forcing extension. The first half of  the present paper is devoted to the study of the effects, on square principles at $\o_2$, of forcing axioms which are natural mild strengthenings of $\MA^{1.5}_{\al_2}$. The main result in this part of the paper is that one such axiom, which we will refer to as $\MA^{1.5}_{\o_2}(\mbox{stratified})$, implies $\Box_{\o_1, \o_1}$.
Moreover, $\MA^{1.5}_{\o_2}(\mbox{stratified})$, and in fact $\MA^{1.5}_\k(\mbox{stratified})$ for any given $\k$ fixed in advance, is consistent by essentially the same proof as in \cite{genMA}. 

Certainly, weak forcing axioms such as $\MA_\k$, for any $\k$, are compatible with square principles: any cardinal-preserving forcing extension of a model with squares will preserve them, and $\MA_\k$ can always be forced by c.c.c.\ forcing. What is perhaps surprising, given that strong forcing axioms at $\o_1$ like $\PFA$ prohibit squares, is the fact that strong enough forcing axioms at $\o_2$, like $\MA^{1.5}_{\o_2}(\mbox{stratified})$, outright imply square principles at $\o_2$. 

Similar results have been obtained by Neeman. One of the results in \cite{Neeman} is that a forcing axiom $\MA^{1.5}_{\o_2}(U)$, whose definition involves a certain parameter $U\sub [\o_2]^{\al_0}$, implies both $\Box_{\o_1, {<}\o}$ and the following strengthening $\Box_{\o_1, \o}^{\textrm{ta}}$ of $\Box_{\o_1, \o}$:\footnote{On the other hand, the definition of $\MA^{1.5}_{\o_2}(\mbox{stratified})$ is parameter-free.} 
Given cardinals $\l\leq\k$ such that $\k\geq\o_1$, $\Box^{\text{ta}}_{\k, \l}$ holds if and only if there is a $\Box_{\k, \l}$-sequence $(\mtcl C_\a\,:\,\a\in\Lim(\k^+))$ such that for every $\a\in \Lim(\k^+)$ and for all $C$, $C'\in\mtcl C_\a$, $C$ and $C'$ agree on a tail, i.e., there is some $\b<\a$ such that $C\setminus\b=C'\setminus\b$.\footnote{The superscript \text{ta} stands for `tail agreement'.}

Neeman also points out in \cite{Neeman} that both $\Box_{\o_1, \o}$ and $\Box_{\o_1, \o}^{\textrm{ta}}$ follow from some of his strong high analogues of $\PFA$. On a related vein,\footnote{One important aspect in which Sakai's result is different from both Neeman's results and the first main result in the present paper is that Sakai's theorem involves strong forcing axiom at $\o_1$, whereas the others are implications from forcing axioms at $\o_2$.} Sakai shows in \cite{Sakai} that Martin's Maximum proves that $\Box^{\text p}_{\o_1}$ (i.e, partial square at $\o_2$) holds and that this is not the case for $\PFA$.

Given a set $N$ such that $N\cap\o_1\in \o_1$, we denote this ordinal by $\d_N$ and call it \emph{the height of $N$}. Given a sequence $\vec e=(e_\a\,:\,\a\in \o_2)$, where $e_\a:|\a|\into \a$ is a bijection for each $\a<\o_2$, we say that a set $N$ is \emph{closed under $\vec e$} if $e_\a\restr\x+1\in N$ whenever $\a\in \o_2\cap N$ and $\x\in |\a|\cap N$. We will be using the following well-known fact repeatedly, sometimes without mention.

\begin{fact}\label{agreement} Suppose $\vec e=(e_\a\,:\,\a\in \o_2)$, where $e_\a:|\a|\into \a$ is a bijection for each $\a<\o_2$, and suppose $N_0$ and $N_1$ are countable submodels of $H(\o_2)$ closed under $\vec e$ and such that $\d_{N_0}\leq\d_{N_1}$. Then $N_0\cap\a\sub N_1$ for every $\a\in N_0\cap N_1\cap\o_2$.
\end{fact}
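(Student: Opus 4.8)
The plan is to prove Fact~\ref{agreement} by analyzing the bijections $e_\a$ and how closure under $\vec e$ forces the two models to agree below the smaller height. Fix $\a\in N_0\cap N_1\cap\o_2$ and let $x\in N_0\cap\a$ be arbitrary; I want to show $x\in N_1$. The key idea is that since $e_\a:|\a|\into\a$ is a bijection lying in both $N_0$ and $N_1$ (as both are closed under $\vec e$ and contain $\a$; note $e_\a$ itself is captured once an appropriate initial segment is), every element of $\a$ is coded by a unique ordinal $\x<|\a|$ via $x=e_\a(\x)$. So it suffices to show that the relevant $\x$ lies in $N_1$, and then use $e_\a\restr\x+1\in N_1$ to recover $x=e_\a(\x)\in N_1$.

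First I would reduce to the case where $\a$ is the least element of $N_0\cap N_1\cap\o_2$ above all the ordinals we care about, and more importantly reduce the problem to ordinals: since $N_0\cap\a$ consists of sets, but the statement $N_0\cap\a\sub N_1$ is really about membership, I would peel off to the ordinal level by observing that it is enough to handle $x\in N_0\cap\a\cap\Ord$, because every $x\in N_0\cap\a$ with $\a$ an ordinal is itself an ordinal below $\a$. So fix an ordinal $\x^*:=e_\a^{-1}(x)\in |\a|$, which is an element of $N_0$ (as $e_\a$ and $x$ are both in $N_0$, assuming $e_\a\in N_0$). The crux is then to show $\x^*\in N_1$; once we have this, closure of $N_1$ under $\vec e$ gives $e_\a\restr\x^*+1\in N_1$, hence $x=e_\a(\x^*)\in N_1$ by evaluating the function in $N_1$.

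To show $\x^*\in N_1$ I would argue by induction on $\x^*\in N_0\cap|\a|$, or equivalently show that $N_0\cap|\a|\sub N_1$. Here the height hypothesis $\d_{N_0}\leq\d_{N_1}$ enters: since $x<\a$ and $e_\a$ is a bijection from $|\a|$, the preimage $\x^*<|\a|\leq\a$, and crucially the ordinals in $N_0$ below $|\a|$ form a set of size at most $\al_0$ whose order type is some countable ordinal $<\o_1$; the supremum of $N_0\cap\o_1$ is exactly $\d_{N_0}$. The point is that $N_0\cap|\a|$ is controlled by $\d_{N_0}$ through the transitive collapse, and the inequality $\d_{N_0}\leq\d_{N_1}$ guarantees that the collapse of $N_0$ sits inside that of $N_1$ on the relevant segment, so that each $\x^*\in N_0\cap|\a|$ is reconstructed inside $N_1$ from data both models share. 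Concretely, one runs an induction: assuming $N_0\cap\x^*\sub N_1$, the ordinal $\x^*$ is definable in $N_1$ from $N_1\cap|\a|$ together with parameters in $N_1\cap\o_1\supseteq N_0\cap\o_1$ (using $\d_{N_0}\leq\d_{N_1}$), placing $\x^*\in N_1$.

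I expect the main obstacle to be precisely this last step: making rigorous the claim that $N_0\cap|\a|\sub N_1$, i.e., pinning down why the height inequality $\d_{N_0}\leq\d_{N_1}$ transfers agreement from the countable ordinals up to the preimages under $e_\a$. The subtlety is that $|\a|$ can be as large as $\o_1$, so $N_0\cap|\a|$ need not be bounded below $\d_{N_0}$ in general; the argument must leverage that each individual $\x^*\in N_0$ is itself an element and can be traced through a chain of bijections $e_\b$ (for $\b\in N_0\cap N_1\cap\o_2$ with $\b\le\a$) down to genuinely countable-ordinal data, at which point $\d_{N_0}\leq\d_{N_1}$ forces membership in $N_1$. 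Organizing this descent cleanly — choosing the right induction (likely on $\a$, or a simultaneous induction on the structure of the $e$-coding) and verifying at each stage that the witnessing parameters already lie in $N_1$ — is where the real content of the proof resides.
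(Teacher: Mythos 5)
Your overall strategy is the correct one and is the same as the paper's: given $\bar\alpha\in N_0\cap\alpha$, pass to the preimage $\xi=e_\alpha^{-1}(\bar\alpha)$, show $\xi\in N_1$, and then use closure of $N_1$ under $\vec e$ (i.e., $e_\alpha\restr\xi+1\in N_1$) to conclude $\bar\alpha=e_\alpha(\xi)\in N_1$. However, you stop short of the single observation that closes the argument, and the machinery you propose in its place --- an induction on $\xi$ or on $\alpha$, transitive collapses, a ``descent'' through chains of bijections $e_\beta$ --- is both unnecessary and, as you yourself acknowledge, not actually carried out. That is a genuine gap, since you explicitly locate ``the real content of the proof'' in a step you do not supply.

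The missing point is elementary: since $\alpha<\omega_2$, we have $|\alpha|\leq\omega_1$, so $\xi\in N_0\cap|\alpha|\subseteq N_0\cap\omega_1=\delta_{N_0}$. By hypothesis $\delta_{N_0}\leq\delta_{N_1}=N_1\cap\omega_1$, and since $N_1\cap\omega_1$ is an ordinal (this is exactly what the notation $\delta_{N_1}$ presupposes), it is an initial segment of $\omega_1$; hence $\xi<\delta_{N_1}$ gives $\xi\in N_1$ at once. In particular your worry that ``$N_0\cap|\alpha|$ need not be bounded below $\delta_{N_0}$'' is beside the point: boundedness plays no role, one only needs that each individual $\xi\in N_0\cap|\alpha|$ is an ordinal below $\delta_{N_0}$, which is automatic because $N_0\cap\omega_1$ \emph{equals} the ordinal $\delta_{N_0}$. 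This is the only place the height hypothesis is used; no induction, no collapse, and no chain of $e_\beta$'s is needed.
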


\begin{proof}
Given any  $\bar\a\in N_0\cap\a$ there is some $\xi\in N_0\cap |\a|$ such that $e_\a(\xi)=\bar\alpha$. But since $\a$ and $\xi$ are both members of $N_1$ as $|\a|\leq\o_1$, we also have that $\bar\alpha=e_\a(\xi)\in N_1$.\end{proof}

\begin{corollary}\label{agreement-0}
Suppose $\vec e=(e_\a\,:\,\a\in \o_2)$, where $e_\a:|\a|\into \a$ is a bijection for each $\a<\o_2$, and suppose $N_0$ and $N_1$ are countable submodels of $H(\o_2)$ closed under $\vec e$ of the same height. Then $N_0\cap N_1\cap\o_2$ is an initial segment of both $N_0\cap\o_2$ and $N_1\cap\o_2$.
\end{corollary}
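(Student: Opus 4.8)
The plan is to reduce the statement directly to Fact \ref{agreement}, which is available in both directions precisely because $N_0$ and $N_1$ have the same height: the hypothesis $\d_{N_0}=\d_{N_1}$ simultaneously gives $\d_{N_0}\leq\d_{N_1}$ and $\d_{N_1}\leq\d_{N_0}$, so the Fact may be applied with the two models in either order.

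First I would unwind what must be checked. To show that $N_0\cap N_1\cap\o_2$ is an initial segment of $N_0\cap\o_2$, the inclusion $N_0\cap N_1\cap\o_2\sub N_0\cap\o_2$ is immediate, so it suffices to verify downward closure inside $N_0\cap\o_2$: whenever $\a\in N_0\cap N_1\cap\o_2$ and $\bar\a\in N_0\cap\o_2$ satisfy $\bar\a<\a$, one must conclude $\bar\a\in N_0\cap N_1\cap\o_2$. Since $\bar\a\in N_0\cap\o_2$ already holds, the only nontrivial point is membership $\bar\a\in N_1$.

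To establish this I would invoke Fact \ref{agreement} with the models in the given order, using $\d_{N_0}\leq\d_{N_1}$. Because $\a\in N_0\cap N_1\cap\o_2$, the Fact yields $N_0\cap\a\sub N_1$. As $\bar\a\in N_0\cap\o_2$ and $\bar\a<\a$, we have $\bar\a\in N_0\cap\a\sub N_1$, hence $\bar\a\in N_1$, as desired. This shows that $N_0\cap N_1\cap\o_2$ is an initial segment of $N_0\cap\o_2$.

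The symmetric assertion for $N_1\cap\o_2$ then follows by the identical argument with $N_0$ and $N_1$ interchanged, which is legitimate exactly because the common height also supplies $\d_{N_1}\leq\d_{N_0}$. I expect there to be essentially no obstacle here: the entire combinatorial content is carried by Fact \ref{agreement}, and the only point requiring any care is to exploit the equality of heights so as to license the application of the Fact in both directions.
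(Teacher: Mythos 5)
Your proof is correct and is exactly the intended argument: the paper states this as an immediate corollary of Fact \ref{agreement} without writing out a proof, and your two applications of the Fact (one in each direction, licensed by the equality of heights) supply precisely the missing details.
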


As already mentioned, \cite{genMA} introduces a strengthening $\MA^{1.5}_\k$ of $\MA_\k$, for any given cardinal $\k$. $\MA^{1.5}_\k$ is the forcing axiom $$\FA_\k(\{\mtbb P\,:\,\mtbb P\mbox{ has the }\al_{1.5}\mbox{-c.c.}\}),$$ where having the $\al_{1.5}$-c.c.\ is defined as follows.\footnote{One can also define $\MA^{1.5}_{{<}\k}$ as $\MA^{1.5}_\l$ for all $\l<\k$.}

\begin{definition} (\cite{genMA}) A partial order $\mtbb P$ has the $\al_{1.5}$-c.c.\ if for every large enough cardinal $\t$ (i.e., every cardinal $\t$ such that $\mtbb P\in H(\t)$) there is a club $E$ of $[H(\t)]^{\al_0}$ such that for every finite $\mtcl N\sub E$ and every $N_0\in\mtcl N$, if $N_0$ has minimum height within $\mtcl N$, then for every $p_0\in N_0\cap\mtbb P$ there is some extension $p\in\mtbb P$ of $p_0$ such that $p$ is $(N, \mtbb P)$-generic for all $N\in\mtcl N$. \end{definition}

It is easy to see (s.\ \cite{genMA}) that every partial order with the countable chain condition has the $\al_{1.5}$-c.c.\ and every partial order with the $\al_{1.5}$-c.c.\ is proper and has the $\al_2$-c.c.\footnote{Hence the $\al_{1.5}$ notation.}

We can naturally strengthen $\MA^{1.5}_\k$ by restricting the definition to finite families $\mtcl N\sub E$ with some given nice structural property $P$. We may then refer to the corresponding enlargement of $$\{\mtbb P\,:\,\mtbb P\mbox{ has the }\al_{1.5}\mbox{-c.c.}\}$$ as the class of partial orders with the $\al_{1.5}$-c.c.\ with respect to finite families with property $P$, and may denote the corresponding forcing axiom by $\MA^{1.5}_\k(P)$.

The following notion of stratified family seems to give rise to a particularly useful form of $\MA^{1.5}_\k$. 

\begin{definition}\label{stratified}
A collection $\mtcl N$ of countable elementary submodels of $H(\t)$, for some infinite cardinal $\t$, is \emph{stratified} in case for all $N_0$, $N_1\in\mtcl N$, if $\d_{N_0}<\d_{N_1}$, then in fact $\ot(N_0\cap\o_2)<\d_{N_1}$. 
\end{definition}

Correspondingly, we say that a forcing notion $\mtbb P$ \emph{has the $\al_{1.5}$-c.c.\ with respect to finite stratified families of models} iff for every infinite cardinal $\t$ such that $\mtbb P\in H(\t)$ there is a club $E\sub [H(\t)]^{\al_0}$ such that for every finite stratified $\mtcl N\sub E$, if $p\in N_0\cap\mtbb P$, where $N_0\in\mtcl N$ is of minimal height within $\mtcl N$, then there is an extension $p^*$ of $p$ in $\mtbb P$ such that $p^*$ is $(N, \mtbb P)$-generic for every $N\in \mtcl N$. Clearly, every partial order with the $\al_{1.5}$-c.c.\ also has the $\al_{1.5}$-c.c.\ with respect to finite stratified families of models. Also, given a cardinal $\k$, we write $\MA^{1.5}_\k(\mbox{stratified})$ to denote $\FA_\k(\mtcl K)$, where $\mtcl K$ is the class of partial orders $\mtbb P$ such that $\mtbb P$ has the $\al_{1.5}$-c.c.\ with respect to finite stratified families of models. We then have that 
$\MA^{1.5}_\k(\mbox{stratified})$ implies $\MA^{1.5}_\k$. 

The following proposition extends the aforementioned fact that every forcing with the $\al_{1.5}$-c.c.\ is proper and has the $\al_2$-c.c.

\begin{proposition}\label{al15-2} If a forcing notion has the $\al_{1.5}$-c.c.\ with respect to finite stratified families of models, then it is proper and has the $\al_2$-c.c.\end{proposition}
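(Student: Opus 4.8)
The plan is to establish the two conclusions separately: properness is essentially immediate from the definition applied to singletons, while the $\al_2$-c.c.\ requires a short but slightly delicate construction of two elementary submodels.

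For properness, I would exploit the fact that a one-element family of models is \emph{vacuously} stratified: if $\mtcl N=\{N\}$, then the hypothesis ``$\d_{N_0}<\d_{N_1}$'' in Definition \ref{stratified} can never be met, so $\{N\}$ is automatically stratified. So, fixing a large enough $\t$ and letting $E\sub[H(\t)]^{\al_0}$ be the club provided by the hypothesis, for every countable elementary submodel $N\in E$ with $\mtbb P\in N$ and every $p\in N\cap\mtbb P$ I may apply the defining property to $\mtcl N=\{N\}$ (with $N$ trivially of minimal height) and obtain an extension $p^*$ of $p$ that is $(N,\mtbb P)$-generic. Since $E$ is a club, this is precisely the assertion that $\mtbb P$ is proper.

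For the $\al_2$-c.c., suppose toward a contradiction that there is an antichain in $\mtbb P$ of size $\al_2$; extending it, I may assume it is a maximal antichain $A$ with $|A|\geq\al_2$. Fix a large enough $\t$, a sequence $\vec e=(e_\a\,:\,\a\in\o_2)$ of bijections $e_\a:|\a|\into\a$, an injective enumeration $\vec p=(p_\x\,:\,\x<\o_2)$ of $\al_2$-many members of $A$, and the club $E$ witnessing the hypothesis. The crucial observation is that I only need a \emph{two-element} stratified family, and that any family of models \emph{all of the same height} is stratified, because then the hypothesis ``$\d_{N_0}<\d_{N_1}$'' of Definition \ref{stratified} is vacuous. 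So the goal is to produce two countable elementary submodels $N_0,N_1\in E$, both closed under $\vec e$ and both containing $A,\vec p,\vec e,\mtbb P$, of a common height $\d$, together with an ordinal $\a\in N_0\cap\o_2$ satisfying $\a>\ssup(N_1\cap\o_2)$; then $p_\a\in N_0\cap\mtbb P$ while $\a\notin N_1$ and hence $p_\a\notin N_1$. Granting this, the pair $\{N_0,N_1\}$ is stratified and $N_0$ is of minimal height, so the defining property applied to $p_\a\in N_0\cap\mtbb P$ yields an extension $p^*$ of $p_\a$ that is $(N,\mtbb P)$-generic for both $N\in\{N_0,N_1\}$. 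Since $A\in N_1$ is a maximal antichain, $(N_1,\mtbb P)$-genericity makes $A\cap N_1$ predense below $p^*$, so $p^*$, and therefore $p_\a$, is compatible with some $q\in A\cap N_1$; as $p_\a\notin N_1$ we have $q\neq p_\a$, contradicting that $A$ is an antichain.

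The only real obstacle is the construction of $N_0$ and $N_1$. I would first build a continuous $\in$-increasing chain $(M_i\,:\,i<\o_1)$ of countable elementary submodels of $H(\t)$ lying in $E$, each closed under $\vec e$, with $A,\vec p,\vec e,\mtbb P\in M_0$ and $(M_j\,:\,j\leq i)\in M_{i+1}$. Its heights $\d_i=\d_{M_i}$ then form a club $C^0\sub\o_1$, and $\g^*=\ssup\{\ssup(M_i\cap\o_2)\,:\,i<\o_1\}<\o_2$. I would then choose $\a\in(\g^*,\o_2)$ with $p_\a\in A$; selecting $\a$ \emph{above the global supremum} $\g^*$ is exactly what breaks the apparent circularity between the height of $N_1$ and the ordinal placed into $N_0$. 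Letting $C^1\sub\o_1$ be the set of heights of models in $E$ that are closed under $\vec e$ and contain $A,\vec p,\vec e,\mtbb P,\a$, which is a club as the projection to heights of a club in $[H(\t)]^{\al_0}$, I would pick any $\d\in C^0\cap C^1$. For the unique $i$ with $\d_i=\d$, the model $N_1:=M_i$ has height $\d$, contains the parameters, and satisfies $\ssup(N_1\cap\o_2)\leq\g^*<\a$; and membership $\d\in C^1$ furnishes a model $N_0\in E$ of height $\d$, closed under $\vec e$, containing the parameters and $\a$. Both models then have height $\d$, with $\a\in N_0\setminus N_1$ as required (and by Corollary \ref{agreement-0} they in fact agree below $\a$). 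I expect the main technical content to be the standard but careful bookkeeping that makes the projection of the relevant clubs of models onto clubs of heights, so as to guarantee a single height $\d$ realized simultaneously by a parameter-model and by a parameter-plus-$\a$ model.
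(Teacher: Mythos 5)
Your proposal is correct and follows essentially the same route as the paper: properness via the observation that singletons are trivially stratified, and the $\al_2$-c.c.\ via a two-element family of models of equal height (hence vacuously stratified) with an element of the antichain in one model but not the other, so that genericity over the second model produces a distinct compatible member of $A$. The only difference is the mechanism for producing the pair of models --- you use a continuous chain together with an intersection of clubs of heights, while the paper reflects inside an elementary submodel $M$ of size $\al_1$ --- and this is immaterial to the argument.
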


\begin{proof} Suppose $\mtbb P$ is a forcing notion with the $\al_{1.5}$-c.c.\ with respect to finite stratified families of models. Let $\t$ be a cardinal such that $\mtbb P\in H(\t)$ and let $E\sub [H(\t)]^{\al_0}$ be a club witnessing, for $H(\t)$, that $\mtcl P$ has the $\al_{1.5}$-c.c.\ with respect to finite stratified families of models. Given any $N\in E$, $\{N\}$ is trivially stratified, and therefore for every $p\in \mtbb P\cap N$ there is an $(N, \mtbb P)$-generic extension of $p$. This shows that $\mtbb P$ is proper. 

To prove that $\mtbb P$ has the $\al_2$-chain condition let us assume, towards a contradiction, that there is a maximal antichain $A$ of $\mtbb P$ such that $|A|\geq\al_2$, and let $(p_i\,:\,i<\l)$ be a one-to-one enumeration of $A$, for some $\l\geq\o_2$. Let $M$ be an elementary submodel of some large enough $H(\chi)$ such that
\begin{enumerate}
\item  $E$, $A\in M$ and
\item $|M|=\al_1$.
\end{enumerate}

Let $i_0\in \o_2\setminus M$ and let $N\preccurlyeq H(\t)$ be countable and such that $p_{i_0}$, $E$, $M\in N$.
% Let $\tau=M\cap\o_2$ and $\b=\ssup(N\cap\tau)$.
By correctness of $M$ we may find $i_1\in \o_2\cap M$ for which there is some $N'\in E\cap M$ such that $\d_{N'}=\d_N$ and $p_{i_1}\in N'$.
Indeed, the existence of such an $N'$ is expressed by a true sentence, as witnessed by $i_0$ and $N$, with $\d_N$, $E$ and $(p_i\,:\,i<\l)$ as parameters.

We note that $\mtcl N=\{N, N'\}$ is a stratified family of members of $E$ as $\d_N=\d_{N'}$. It follows, since $p_{i_0}\in N$ and $\d_{N}=\d_{N'}$, that we may find an $(N', \mtbb P)$-generic condition $p$ extending $p_{i_0}$. Then there must be condition $p'$ extending $p$ and extending some $\bar p\in A\cap N'$. But that is impossible since $A$ is an antichain and $\bar p\neq p_{i_0}$ as $N'\sub M$.
\end{proof}

Essentially the same forcing construction from \cite{genMA} showing the consistency of $\MA^{1.5}_{{<}\k}$, for any given closed enough $\k$, can be used to prove the following theorem.\footnote{The main point is that all relevant systems of models coming up in the proof from \cite{genMA} turn out in fact to be stratified.} 

\begin{theorem}\label{mainthm000} ($\textsf{CH}$) Let $\k\geq\o_2$ be a regular cardinal such that $\m^{\al_0}< \kappa$ for all $\m < \k$ and $\diamondsuit(\{\a<\k\,:\,\cf(\a)\geq\o_1\})$ holds. Then there is a proper forcing notion $\mtcl P$ of size $\k$ with the $\al_2$-chain condition such that the following statements hold in the generic extension by $\mtcl P$.

\begin{enumerate}

\it[(1)] $2^{\al_0}=\k$
   
\it[(2)] For every $\l<\k$, $\MA^{1.5}_\l(\mbox{stratified})$
\end{enumerate}
\end{theorem}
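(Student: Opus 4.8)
The plan is to reuse, essentially verbatim, the forcing construction from \cite{genMA} that forces $\MA^{1.5}_{{<}\k}$, and to verify the single additional point flagged in the footnote: every finite family of models that the preservation analysis appeals to is in fact stratified, so that the very same generic object witnesses the stronger axiom $\MA^{1.5}_\l(\mbox{stratified})$ for each $\l<\k$. Concretely, I would build $\mtcl P=\mtcl P_\k$ as the length-$\k$ iteration of \cite{genMA}, whose iterands are (names for) forcings with the $\al_{1.5}$-c.c.\ with respect to finite stratified families, using the support employed there. The bookkeeping is driven by the $\diamondsuit(\{\a<\k\,:\,\cf(\a)\geq\o_1\})$-sequence: at each stage $\a$ of uncountable cofinality the diamond guess anticipates a pair consisting of a $\mtcl P_\a$-name $\dot{\mtbb Q}_\a$ for such a forcing together with a sequence of fewer than $\k$ many names for dense subsets of it, and one then forces with $\dot{\mtbb Q}_\a$. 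The hypothesis that $\m^{\al_0}<\k$ for every $\m<\k$ (together with $\k$ regular, which yields $\k^{\al_0}=\k$) guarantees that each $\mtcl P_\a$ has size $<\k$, that there are fewer than $\k$ relevant nice names at stage $\a$, and hence that the diamond correctly anticipates, on a stationary set of stages, every forcing/dense-family pair occurring in $V^{\mtcl P_\k}$.

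The core of the argument is the preservation induction, which is exactly the iteration theorem of \cite{genMA}: one shows by induction on $\a\leq\k$ that $\mtcl P_\a$ has the $\al_{1.5}$-c.c.\ with respect to finite stratified families, whence, by Proposition~\ref{al15-2}, $\mtcl P_\a$ is proper and has the $\al_2$-chain condition. The only place where the stratified variant could behave differently from the full $\al_{1.5}$-c.c.\ is the amalgamation step, where, given a finite stratified family $\mtcl N$ of models and a condition in the model of least height, one climbs the family and successively applies the chain condition of the iterands to produce an extension that is $(N,\mtcl P_\a)$-generic for every $N\in\mtcl N$. Since the iterands now carry only the stratified chain condition, they supply such generic conditions only for stratified families of their own models; the \emph{essential observation} is therefore that the finite systems of models fed to the iterands along this climb are themselves stratified. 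This should follow from the way those systems are produced in \cite{genMA}: the models are chosen in a tower with controlled order types, so that whenever $\d_{N_0}<\d_{N_1}$ one already has $\ot(N_0\cap\o_2)<\d_{N_1}$, which is precisely the defining condition of Definition~\ref{stratified}. With this in hand the induction closes verbatim.

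Granting the preservation, I would read off the two conclusions as in \cite{genMA}. For (2), fix $\l<\k$ and a forcing $\mtbb Q$ in $V^{\mtcl P_\k}$ with the $\al_{1.5}$-c.c.\ with respect to finite stratified families, together with $\l$ many dense sets; by the $\al_2$-c.c.\ and $\k^{\al_0}=\k$, both $\mtbb Q$ and the dense sets have names appearing in some $V^{\mtcl P_\a}$ with $\cf(\a)\geq\o_1$, where the diamond anticipates them and $\dot{\mtbb Q}_\a$ is forced with, so that the generic $G_\a$ yields a filter meeting all $\l$ dense sets, giving $\FA_\l$ for $\mtbb Q$. For (1), $2^{\al_0}\leq\k$ follows since, by the $\al_2$-c.c.\ and the regularity of $\k$, every real of $V^{\mtcl P_\k}$ already belongs to $V^{\mtcl P_\a}$ for some $\a<\k$, while each $\mtcl P_\a$ adds at most $|\mtcl P_\a|^{\al_0}<\k$ reals; and $2^{\al_0}\geq\k$ because each $\MA^{1.5}_\l(\mbox{stratified})$ implies $\MA_\l$ and hence $2^{\al_0}>\l$, for every $\l<\k$.

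I expect the genuine obstacle to be the middle step: pinning down, inside the preservation proof of \cite{genMA}, exactly which finite families of models the iterands' chain condition is applied to, and checking that the order-type bookkeeping built into that construction really does force each such family to satisfy $\ot(N_0\cap\o_2)<\d_{N_1}$ whenever $\d_{N_0}<\d_{N_1}$. Everything else is a faithful transcription of the $\MA^{1.5}_{{<}\k}$ construction.
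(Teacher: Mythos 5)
Your proposal takes essentially the same route as the paper, which in fact offers no more detail than you do: it simply asserts that the construction of \cite{genMA} goes through verbatim, with a footnote identifying exactly the point you flag as the crux, namely that all systems of models arising in the preservation argument of \cite{genMA} are already stratified. Your additional sketch of the diamond bookkeeping, the preservation induction via Proposition \ref{al15-2}, and the reading-off of (1) and (2) is consistent with that outline, so the proposal is a faithful (indeed more explicit) rendering of the paper's argument.
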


Let us write $\PFA_{\al_2}(\aleph_2\mbox{-c.c.})$ to denote $\FA_{\al_2}(\mtcl K)$, where $\mtcl K$ is the class of proper forcing notions with the $\al_2$-chain condition. 

Theorem \ref{mainthm000}, as well as other similar strengthenings of the main result from \cite{genMA}, motivate the following question.\footnote{This question is also motivated by the main result from \cite{Asp-Golsh}, to the effect that it is consistent, for arbitrary choice of $\kappa$, that $\FA_\k(\{\mtbb P\,:\, \mtbb P\mbox{ proper}, |\mtbb P|=\al_1\})$ holds. It is worth pointing out that the proof of this theorem is very different from the proof of the main result from \cite{genMA}.}

\begin{question} Is $\PFA_{\al_2}(\aleph_2\mbox{-c.c.})$ consistent?
\end{question}

We are not able to answer this question. However, in Section \ref{s2}
% the second part of this paper 
we will show that $\MM_{\al_2}(\aleph_2\mbox{-c.c.})$, a natural strengthening of $\PFA_{\al_2}(\aleph_2\mbox{-c.c.})$, is in fact inconsistent. $\MM_{\al_2}(\aleph_2\mbox{-c.c.})$ is  $\FA_{\al_2}(\mtcl K^*)$, where $\mtcl K^*$ is the class of forcing notions that both preserve stationary subsets of $\o_1$ and have the $\al_2$-chain condition.

One of the ingredients of this proof will be the fact that $\MM_{\al_2}(\aleph_2\mbox{-c.c.})$ implies $\Box_{\o_1, \o_1}$ (since $\MM_{\al_2}(\aleph_2\mbox{-c.c.})$ extends $\MA^{1.5}_{\al_2}(\mbox{stratified})$).

There are other results in the literature dealing with failures of forcing axioms at $\al_2$ or above. In this respect we single out the following theorem of Shelah (\cite{Shelah}), extended by the main result in Section \ref{s2}. 
%the second part of the paper.

\begin{theorem} (Shelah) Given any regular cardinal $\l>\o_1$, $\FA_\l(\mtcl K_\l)$ is false, where $\mtcl K_\l$ is the class of forcing notions preserving all stationary subsets of $\m$ for every uncountable regular cardinal $\m\leq\l$. 
\end{theorem}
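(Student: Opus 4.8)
The plan is to argue by contradiction. Assuming $\FA_\l(\mtcl K_\l)$ holds, it suffices to produce a single forcing notion $\mtbb P\in\mtcl K_\l$ together with a family $(D_i\,:\,i<\l)$ of dense subsets of $\mtbb P$ admitting no filter meeting all of them. The conceptual point driving the choice of $\mtbb P$ and the $D_i$ is that, although a filter $G$ furnished by the axiom meets only $\l$-many dense sets and is thus very far from being $\V$-generic, the object it assembles is concrete and therefore lies in $\V$; the contradiction must accordingly be extracted by confronting this $\V$-object with a rigid combinatorial structure fixed in $\V$ in advance, exploiting crucially that membership $\mtbb P\in\mtcl K_\l$ leaves that structure intact. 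To this end I would fix, using $\ZFC$ alone, a club-guessing sequence $(c_\a\,:\,\a\in E)$ on $E=\{\a<\l\,:\,\cf(\a)=\o_1\}$ ---so each $c_\a$ is club in $\a$ of order type $\o_1$ and, for stationarily many $\a\in E$, every club of $\l$ contains some $c_\a$ as a subset--- together with a partition $(S_i\,:\,i<\l)$ of $\{\a<\l\,:\,\cf(\a)=\o\}$ into $\l$-many pairwise disjoint stationary sets (Solovay).

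The forcing $\mtbb P$ I would try to use builds, by closed bounded approximations, a club $C\sub\l$ carrying side information that, at each $\a\in C\cap E$, commits to a ``reading'' of the guessed sequence $c_\a$ against the partition $(S_i\,:\,i<\l)$; extension is end-extension of the approximation together with refinement of the side conditions. The dense sets are of two kinds: for each $\b<\l$, the set $D_\b$ of conditions whose top lies above $\b$, which guarantees that $C=\bigcup\{c\,:\,c\in G\}$ is unbounded and hence a club of $\l$; and, for each relevant level, the dense sets forcing the corresponding commitment to be resolved. Once $C$ is a club, club-guessing locates, in $\V$, a stationary set of $\a\in E$ with $c_\a\sub C$, and along each such $c_\a$ the resolved side conditions read off, cofinally in $\a$, indices from $\l$; a pressing-down argument---first on the stationary set of guessed points below $\l$, then, after transferring the situation to the copy of $\o_1$ given by $c_\a$, on $\o_1$ using the preserved stationary sets there---is intended to force a single index to be used along a stationary set, which collides with the pairwise disjointness of the $S_i$ and yields the desired contradiction.

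The main obstacle, and the real content of the theorem, is verifying that $\mtbb P$ genuinely belongs to $\mtcl K_\l$: it must preserve stationary subsets of every uncountable regular $\m\le\l$. Preservation at $\l$ itself is what keeps $E$, the sets $S_i$, and the guessed points stationary in the extension and, more basically, is what makes $\mtbb P$ eligible for the axiom at all; preservation at $\o_1$ is what prevents the side-condition commitments from being washed out by newly added clubs and what keeps the lower pressing-down argument alive. The tension is structural: one needs enough closure or distributivity for stationary-preservation at all levels simultaneously, yet enough flexibility that a merely $\l$-generic filter already forces the entire diagonal pattern. I expect the preservation of stationary subsets of $\o_1$---the level at which new countable sequences threaten to appear---to be the delicate step, to be handled by a properness- or strategic-closure-style argument tailored to the side conditions, while the productivity of the $\l$-many dense sets will have to be read off directly from the design of the approximations.
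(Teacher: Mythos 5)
The paper does not actually prove this theorem --- it is quoted from Shelah's \emph{Forcing axiom failure for any $\lambda>\aleph_1$}, and the closest argument carried out in the paper is the $\lambda=\omega_2$ case in Section \ref{s2}. Measured against either, your proposal is a plan with genuine gaps rather than a proof. First, the forcing is never defined: ``side information that commits to a reading of $c_\alpha$ against the partition $(S_i)$'' does not determine a partial order, and without a concrete definition none of the density or preservation claims can be checked. Second, you yourself flag that membership of $\mathbb{P}$ in $\mathcal{K}_\lambda$ is ``the real content'' and defer it to an unspecified properness- or strategic-closure-style argument; that is exactly the step that cannot be waved at, since the entire difficulty is to exhibit a forcing that preserves stationary subsets of every regular uncountable $\mu\le\lambda$ and yet, after meeting only $\lambda$-many dense sets, assembles an object that provably cannot exist in $V$.

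Third, and most substantively, the endgame does not close. Club-guessing gives you stationarily many $\alpha$ with $c_\alpha\subseteq C$, and pressing down can stabilize an index $i$ on a stationary set, but a single index being ``used'' stationarily often does not collide with the pairwise disjointness of the $S_i$: disjoint stationary sets are not contradicted by anything of that shape, so no contradiction is actually reached. What is needed is a ZFC-provable \emph{negative} combinatorial statement for the $V$-object produced by the filter to violate. Shelah's argument (and its $\omega_2$ instance in Section \ref{s2} of this paper) uses non-uniformization: the ZFC theorem that $\Unif_{S,2}$ fails for stationary $S$ concentrating on points of cofinality $\omega_1$ (cf.\ Theorems \ref{Shelah-thm} and \ref{unif}), paired with a stationary-set-preserving forcing that uniformizes a given colouring $F:S\into 2$ along guessed clubs modulo bounded or nonstationary sets. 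The forcing axiom then yields a uniformizing function in $V$, contradicting the ZFC theorem. Your partition-plus-pressing-down scheme would have to be replaced by, or reduced to, such a uniformization argument; as written, there is no identifiable contradiction at the end of the construction.
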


We will actually give two proofs of the failure of $\MM_{\al_2}(\aleph_2\mbox{-c.c.})$. The first one, in Section \ref{s2}, we have already referred to. The second proof, in Section \ref{s4}, will make use of a consequence of $\MA^{1.5}_{\al_2}(\mbox{stratified})$ regarding canonical functions. We recall that if $\a<\o_2$ is a nonzero ordinal and $\p:\o_1\into\a$ is a surjection, the function $g_\a:\o_1\into \o_1$ defined by letting $g_\a(\n)=\ot(\p``\n)$ is called a, or the, \emph{canonical function for $\a$}. The use the definite article when referring to canonical functions for a given $\a$ is justified by the following obvious observation, which in particular implies that $g_\a$ is uniquely determined modulo clubs.

\begin{fact} Given $\a<\o_2$ and given surjections $\p_0$, $\p_1:\o_1\into \a$ there is a club of $\n<\o_1$ such that $\p_0``\n=\pi_1``\n$. \end{fact}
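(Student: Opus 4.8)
The plan is to use a standard elementary-submodel argument. Fix $\a<\o_2$ and surjections $\p_0$, $\p_1:\o_1\into\a$, and let $\t$ be a large enough regular cardinal with $\a$, $\p_0$, $\p_1\in H(\t)$. First I would consider the collection of countable $M\prec H(\t)$ with $\a$, $\p_0$, $\p_1\in M$; by the usual closure argument the set $C$ of ordinals of the form $\n=M\cap\o_1$ arising from such $M$ contains a club of $\o_1$.

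The key step is to show that any such $\n=M\cap\o_1$ lies in the desired set, i.e.\ that $\p_0``\n=\p_1``\n$, which I would obtain by establishing $\p_0``\n=\a\cap M=\p_1``\n$. For the inclusion $\p_0``\n\sub\a\cap M$, note that every $\b<\n$ belongs to $M$ (since $\n=M\cap\o_1$ is an ordinal, hence a transitive subset of $\o_1$), so $\p_0(\b)\in\a\cap M$. For the reverse inclusion, given $\xi\in\a\cap M$, elementarity together with the fact that $\p_0$ is onto $\a$ yields some $\b\in M\cap\o_1=\n$ with $\p_0(\b)=\xi$, whence $\xi\in\p_0``\n$. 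The same computation with $\p_1$ in place of $\p_0$ gives $\p_1``\n=\a\cap M$, and therefore $\p_0``\n=\p_1``\n$ for every $\n\in C$.

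I do not expect any genuine obstacle here: the only point requiring the slightest care is the observation that $\b<M\cap\o_1$ forces $\b\in M$, which is immediate. Alternatively, one could dispense with models entirely by defining, for $i\in\{0,1\}$, the function $f_i:\o_1\into\o_1$ sending $\b$ to the least $\g$ with $\p_i(\b)\in\p_{1-i}``\g$, and taking $C$ to be the club of $\n<\o_1$ closed under both $f_0$ and $f_1$; closure under $f_0$ gives $\p_0``\n\sub\p_1``\n$ and closure under $f_1$ gives $\p_1``\n\sub\p_0``\n$, yielding the same conclusion.
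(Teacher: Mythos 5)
Your proof is correct; the paper states this fact as an ``obvious observation'' and supplies no proof, and either of your two arguments (the elementary-submodel computation $\p_0``\n=\a\cap M=\p_1``\n$, or the club of ordinals closed under the two ``search'' functions $f_0$, $f_1$) is a complete and standard justification. No gaps.
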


Another standard fact about the canonical function for $\a$ is that it represents the ordinal $\a$ in every generic ultrapower of $V$ obtained from forcing with $\mtcl P(\o_1)/\NS_{\o_1}$. In other words, if $g$ is a canonical function for $\a$, then $\mtcl P(\o_1)/\NS_{\o_1}$ forces that, letting $M=((^{\o_1^V}V)\cap V)/\dot G$ be the generic ultrapower of $V$ obtained from $\dot G$, the set of $M$-ordinals below the class $[g]_{\dot G}$ of $g$ in $M$ is well-ordered in order type $\a$.  

\emph{Club-bounding by canonical functions}, $\CB$, is the statement that every function $f:\o_1\into \o_1$ is bounded on a club by the canonical function of some nonzero $\a<\o_2$. $\CB$ is a weakening of $\NS_{\o_1}$ being saturated.

A natural weakening of $\CB$ is \emph{weak Chang's Conjecture}, $\wCC$, which is the statement that for every function $f:\o_1\into  \o_1$ there is some $\a<\o_2$ such that $\{\n<\o_1\,:\, f(\n)<g(\n)\}$ is stationary for every canonical function $g$ for $\a$ (s.\ \cite{donder-levinski}). Martin's Maximum implies the saturation of $\NS_{\o_1}$  (\cite{FMS}), and hence also $\CB$. On the other hand, it is not difficult to see that not even $\wCC$ follows from $\PFA$ (s.\ e.g.\ \cite{Asp} for strong forms of this non-implication).

In Section \ref{s3} we will prove that $\MA^{1.5}_{\al_2}(\mbox{stratified})$ implies $\lnot\wCC$. Using this implication, in Section \ref{s4} we will give another proof of the inconsistency of $\MM_{1.5}(\aleph_2\mbox{-c.c.})$. Specifically, assuming this forcing axiom, we will produce a sequence $(f_n)_{n<\o}$ of functions from $\o_1$ to $\o_1$ such that for each $n$, $f_n$ dominates $f_{n+1}$ on a club. This is of course impossible as then we are able to find an infinite decreasing sequence of ordinals. 

The rest of the paper is structured as follows. 
In the following section we prove that $\MA^{1.5}_{\al_2}(\mbox{stratified})$ implies $\Box_{\o_1, \o_1}$. Using this result, in Section \ref{s2} we prove that $\MM_{\al_2}(\aleph_2\mbox{-c.c.})$ is false. In Section \ref{s3} we prove that $\MA^{1.5}_{\al_2}(\mbox{stratified})$ implies $\lnot\wCC$. Finally, in Section \ref{s4}, using the implication in Section \ref{s3}, we give another proof of the inconsistency of $\MM_{\al_2}(\aleph_2\mbox{-c.c.})$.

Throughout the paper, we will write $S^2_1$ for $\{\a<\o_2\,:\,\cf(\a)=\o_1\}$, and sometimes $S^2_0$ for $\{\a<\o_2\,:\,\cf(\a)=\o\}$. All undefined pieces of notation are hopefully standard and may be found in \cite{JECH} or \cite{KUNEN}.

\section{$\MA^{1.5}_{\al_2}(\mbox{stratified})$ implies $\Box_{\o_1, \o_1}$}\label{s1}

The main theorem in this section is the following.

\begin{theorem}\label{thm1} $\MA^{1.5}_{\al_2}(\mbox{stratified})$ implies $\Box_{\o_1, \o_1}$.
 \end{theorem}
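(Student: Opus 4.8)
The plan is to apply the forcing axiom to a partial order $\mtbb P$ that generically adds a $\Box_{\o_1,\o_1}$-sequence by approximations, so the whole task reduces to checking that $\mtbb P$ lies in the class $\mtcl K$ of forcings with the $\al_{1.5}$-c.c.\ with respect to finite stratified families of models. Fix once and for all a sequence $\vec e=(e_\a\,:\,\a<\o_2)$ of bijections $e_\a:|\a|\into\a$, so that Fact~\ref{agreement} and Corollary~\ref{agreement-0} are available. Let the conditions of $\mtbb P$ be the coherent approximations $p=(\mtcl C^p_\a\,:\,\a\in\Lim(\g_p+1))$, for $\g_p\in\Lim(\o_2)$, where each $\mtcl C^p_\a$ is a nonempty collection of clubs of $\a$ of order type $\leq\o_1$, of size $\leq\o_1$, satisfying the coherence requirement in the definition of $\Box_{\o_1,\o_1}$; order $\mtbb P$ by end-extension, and for $\b<\o_2$ set $D_\b=\{p\,:\,\g_p\geq\b\}$. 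Coherence is preserved under end-extension, so any filter meeting all $D_\b$ yields a $\Box_{\o_1,\o_1}$-sequence via $\bigcup G$, and by Proposition~\ref{al15-2} $\mtbb P$ preserves $\o_1$ and $\o_2$, so the order-type and size bounds are computed correctly in $V$. Density of $D_\b$ is the routine square-construction bookkeeping: to extend to a new limit level $\a$ of cofinality $\o$ one uses a single cofinal $\o$-sequence with no limit points below $\a$, making coherence vacuous there, while at a level of cofinality $\o_1$ one threads a continuous cofinal sequence of order type $\o_1$ through clubs already placed below, adding the requisite proper initial segments on the way up.

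The content is showing $\mtbb P\in\mtcl K$. Let $E$ be the club of countable $N\elsub H(\t)$ closed under $\vec e$ with $\mtbb P,\vec e\in N$. Given a finite stratified $\mtcl N\sub E$ and $p\in N_0\cap\mtbb P$ with $N_0\in\mtcl N$ of minimal height, write $\d^{\o_2}_N=\ssup(N\cap\o_2)$ for $N\in\mtcl N$, noting that each such ordinal has cofinality $\o$. The aim is a single descending sequence $p=p_0\geq p_1\geq\cdots$ and a terminal condition $p^*$ such that for every $N\in\mtcl N$ the part of the sequence below $\d^{\o_2}_N$ is the union of an $N$-generic sequence of conditions of $N$. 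As is standard for end-extension forcings, genericity with respect to $N$ depends only on the approximation below $\d^{\o_2}_N$, so once this is arranged $p^*$ is $(N,\mtbb P)$-generic for every $N\in\mtcl N$ at once; whenever the construction reaches some $\d^{\o_2}_N$ it caps that level with a cofinal $\o$-sequence, which again carries no coherence obligation. The case $|\mtcl N|=1$ is exactly the proof that $\mtbb P$ is proper.

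The difficulty, and the point where the hypotheses genuinely enter, is that for $N,N'\in\mtcl N$ the part of the sequence below $\min\{\d^{\o_2}_N,\d^{\o_2}_{N'}\}$ must be generic for $N$ and for $N'$ simultaneously, hence must be assembled from conditions lying in $N\cap N'$. For models of equal height this is furnished by Corollary~\ref{agreement-0}: $N\cap N'\cap\o_2$ is a common initial segment of $N\cap\o_2$ and of $N'\cap\o_2$, over which the two models see matching dense sets, so one sequence of conditions of $N\cap N'$ meets the dense sets of both. For models of distinct heights $\d_N<\d_{N'}$, the stratification bound $\ot(N\cap\o_2)<\d_{N'}$ together with Fact~\ref{agreement} is what controls the situation: below any common ordinal all of $N\cap\o_2$ already lies in $N'$, and the order-type bound forces the trace of $N$ on $\o_2$ to be small enough to be captured inside $N'$, so that the conditions of $N$ needed for the shared initial part are also available to $N'$. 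Organizing these two mechanisms into one recursion over the finitely many models of $\mtcl N$ — processing them compatibly with both their heights and the ordinals $\d^{\o_2}_N$, and checking at each step that the condition just chosen is still a legitimate member of every model whose region of responsibility has not yet been closed off — is the main obstacle, and is precisely what the combination of Fact~\ref{agreement}, Corollary~\ref{agreement-0} and the stratification clause is designed to make possible.

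Granting $\mtbb P\in\mtcl K$, the axiom $\MA^{1.5}_{\al_2}(\mbox{stratified})$ applies to $\mtbb P$ and the family $(D_\b\,:\,\b<\o_2)$ of $\al_2$-many dense sets, yielding a filter $G\in V$ meeting every $D_\b$. Then $\ssup\{\g_p\,:\,p\in G\}=\o_2$, so $\bigcup G$ is a coherent sequence defined on all of $\Lim(\o_2)$ with clubs of order type $\leq\o_1$ and collections of size $\leq\o_1$; that is, $\bigcup G$ is a $\Box_{\o_1,\o_1}$-sequence in $V$, as required.
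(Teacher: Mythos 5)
Your choice of poset does not work: the forcing $\mtbb P$ whose conditions are initial segments $(\mtcl C^p_\a\,:\,\a\in\Lim(\g_p+1))$ ordered by end-extension fails the $\al_2$-chain condition, and therefore, by Proposition \ref{al15-2}, it cannot have the $\al_{1.5}$-c.c.\ with respect to finite stratified families of models. Indeed, under end-extension any two conditions with the same top level $\g_p=\g_{p'}=\o_1$ but $\mtcl C^p_{\o_1}\neq\mtcl C^{p'}_{\o_1}$ are incompatible, and there are at least $2^{\al_1}\geq\al_2$ ways to place a single club of $\o_1$ at that level (closing off under coherence below it costs nothing), so $\mtbb P$ has antichains of size $\al_2$. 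Thus the step you describe as ``the content'' of the proof --- showing $\mtbb P\in\mtcl K$ --- is not merely delicate but false, and the recursion you sketch cannot be carried out. The same obstruction appears locally in your simultaneous-genericity argument: for an end-extension forcing, an $(N,\mtbb P)$-generic condition must decide the sequence below $\ssup(N\cap\o_2)$ as the union of a descending sequence of conditions of $N$ meeting every dense set of $N$, and for two models $N$, $N'$ of the same height whose traces on $\o_2$ diverge above $N\cap N'\cap\o_2$, the dense sets of $N$ need not belong to $N'$ and need not be met by conditions lying in $N\cap N'$; Corollary \ref{agreement-0} gives agreement of the traces on an initial segment but says nothing about the dense sets.

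The paper avoids all of this by forcing with \emph{finite} approximations to the threads of the square sequence, together with a finite stratified family of countable models carried along as side conditions (the poset $\mtcl P$ of Section \ref{s1}); the models in a condition are what simultaneously enforce coherence of the generic object, properness, and the $\al_2$-c.c. To salvage your outline you would need to replace the end-extension poset by a finite-condition poset of this kind, after which the density and amalgamation arguments take a rather different shape from the one you describe.
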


Let $\vec e=(e_\a\,:\, \a<\o_2)$ be such that $e_\a:|\a|\into\a$ is a bijection for each $\a<\o_2$. 
We define the following forcing notion $\mtcl P$.

Conditions in $\mtcl P$ are triples $$p=(h^p, i^p, \mtcl N_p)$$ with the following properties.

\begin{enumerate}

\item $h^p$ is a function such that $\dom(h_p)\in  [\Lim(\o_2)\times\o_1\times (\o_1+1)]^{{<}\o}$ and such that for each $(\a,  \n, \tau)\in \dom(h^p)$, $h^p(\a, \n, \tau)\sub\tau\times\a$ is a finite function which can be extended to a strictly increasing and continuous function $f:\tau\into\a$ with range cofinal in $\a$.
\item For every $(\a, \n, \tau)\in\dom(h^p)$:
\begin{enumerate}
\item  if $\cf(\a)=\o_1$, then $\tau=\o_1$ and $\n=0$;
\item  if $\cf(\a)=\o$, then $\tau\in\Lim(\o_1)$. 
\end{enumerate}
\item For every $\a$ and $\n$ there is at most one $\tau$ such that $(\a, \n, \tau)\in\dom(h^p)$.

\item $i^p$ is a function whose domain is the set of triples $(\a, \n, \bar\tau)$ such that $(\a, \n, \tau)\in\dom(h^p)$ for some $\tau$ and $\bar\tau\in\dom(h^p(\a, \n, \tau))\cap\Lim(\o_1)$, and $i^p(\a, \n, \bar\tau)\in\o_1$ for each $(\a, \n, \bar\tau)\in\dom(i^p)$.

\item The following holds for each $(\a,  \n, \tau)\in \dom(h^p)$.
\begin{enumerate}

\item For every limit ordinal $\bar\tau\in\dom(h^p(\a, \n,  \tau))$, 
$$(h^p(\a, \n, \tau)(\bar\tau), i^p(\a, \n, \bar\tau), \bar\tau)\in \dom(h^p)$$ and $$h^p(h^p(\a, \n, \tau)(\bar\tau),  i^p(\a, \n, \bar\tau), \bar\tau)=h^p(\a, \n, \tau)\restr\bar\tau$$
\end{enumerate}

\item $\mtcl N$ is a finite stratified collection of countable elementary submodels of $(H(\o_2); \in)$ closed under $\vec e$.

\item For every $N\in\mtcl N$ and every $(\a, \n, \tau)\in\dom(h^p)$ such that $\a$, $\n\in N$:
\begin{enumerate}
\item $\tau\in N$;
\item  $h^p(\a, \n, \tau)\restr N\sub N$;\footnote{In particular, if $\cf(\a)=\o$, then $h^p(\a, \n, \tau)\in N$.}
\item If $\cf(\a)=\o_1$, then $$\d_N\in\dom(h^p(\a, \n, \o_1))$$ and $$h^p(\a, \n, \o_1)(\d_N)=\ssup(N\cap\a)$$
\item For every $\bar\tau\in\dom(h^p(\a, \n, \tau))\cap\Lim(\o_1)\cap N$, $i^p(\a, \n, \bar\tau)\in N$.
\end{enumerate}

\end{enumerate}

Given $\mtcl P$-conditions $p_0$ and $p_1$, $p_1$ extends $p_0$ if and only if:
\begin{enumerate}
\item $\dom(h^{p_0})\sub\dom(h^{p_1})$;
\item for every $(\a, \n, \tau)\in \dom(h^{p_0})$, 
\begin{enumerate}
\item $h^{p_0}(\a, \n, \tau)\sub h^{p_1}(\a, \n, \tau)$, and
\item $i^{p_1}(\a, \n, \bar\tau)=i^{p_0}(\a, \n, \bar\tau)$ for each $\bar\tau\in\dom(h^{p_0}(\a, \n, \tau))\cap\Lim(\o_1)$.
\end{enumerate}
\item $\mtcl N_{p_0}\sub \mtcl N_{p_1}$.
\end{enumerate}

Given $p\in\mtcl P$, we denote $\{\a\in S^2_1\,:\,(\a, \n, \tau)\in\dom(h^p)\mbox{ for some }\n,\,\tau\}$ by $X_p$.

\begin{lemma}\label{aleph15} $\mtcl P$ has the $\al_{1.5}$-c.c.\ with respect to finite stratified families of models.
\end{lemma}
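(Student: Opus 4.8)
To show $\mtcl P$ has the $\al_{1.5}$-c.c.\ with respect to finite stratified families, I first need to identify the right club $E \sub [H(\t)]^{\al_0}$, and then prove the genericity-amalgamation property. The natural choice of $E$ is the set of countable $N \prec H(\t)$ that are closed under $\vec e$ (and contain $\mtcl P$, $\vec e$ as elements), which is easily a club. The real content is: given a finite stratified family $\mtcl N = \{N_0, \dots, N_{k-1}\} \sub E$ with $N_0$ of minimal height, and a condition $p_0 \in N_0 \cap \mtcl P$, I must produce an extension $p^*$ of $p_0$ that is $(N_j, \mtcl P)$-generic for every $j$.

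**The construction of $p^*$.** My first step is to add all the models to the working part: set $\mtcl N_{p^*} \sup \mtcl N_{p_0} \cup \mtcl N$ (checking $\mtcl N_{p_0} \cup \mtcl N$ is still stratified, which should follow since $\mtcl N_{p_0} \in N_0$ consists of models of height $< \d_{N_0}$, indeed of order type $< \d_{N_0}$, and $\mtcl N$ is stratified with $N_0$ minimal). The heart of the matter is defining $h^{p^*}$ so that each $h^{p^*}(\a, \n, \tau)$ honours the new interpolation requirement in clause (7) — in particular (7)(c), which forces $\d_N \in \dom(h^{p^*}(\a, \n, \o_1))$ with value $\ssup(N \cap \a)$ whenever $\cf(\a) = \o_1$ and $\a, \n \in N$. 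The plan is to enlarge $h^{p_0}$ by, for each $(\a, \n, \tau) \in \dom(h^{p_0})$ of uncountable cofinality and each $N_j \ni \a, \n$, pinning the value at $\d_{N_j}$ to $\ssup(N_j \cap \a)$, then closing off under the coherence requirement (5)(a), which recursively demands new entries $h^{p^*}(\a', \n', \bar\tau)$ at the limit points $\bar\tau$ appearing in the newly added pairs. I must choose the auxiliary values $i^{p^*}$ consistently. The key tool here is Fact~\ref{agreement} and Corollary~\ref{agreement-0}: since the models are closed under $\vec e$, the sets $N_j \cap \a$ cohere as initial segments when heights coincide and are nested by inclusion on $\a$ when heights differ, so the prescribed values $\ssup(N_j \cap \a)$ are mutually compatible and strictly increasing with $\d_{N_j}$, which is exactly what is needed for all the partial functions to extend to strictly increasing continuous maps and for condition~(1) to hold.

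**Genericity.** Once $p^*$ is a legitimate condition extending $p_0$ with all of $\mtcl N$ absorbed, I must verify it is $(N_j, \mtcl P)$-generic for each $j$. Following the standard pattern, I would show that for every dense set $D \in N_j$ and every extension $q \le p^*$, there is $\bar q \in D \cap N_j$ compatible with $q$. The strategy is to use elementarity of $N_j$ to reflect the relevant finite data of $q$ that lives "below $N_j$" — the parts of $h^q, i^q$ with coordinates in $N_j$, together with the models of $\mtcl N_q$ of height $< \d_{N_j}$ — into $N_j$, find a witness $\bar q \in D \cap N_j$ matching that reflected data, and then argue $\bar q$ and $q$ are compatible by exhibiting a common extension. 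Compatibility will again hinge on the agreement facts: the "generic" values $\ssup(N_j \cap \a)$ recorded at $\d_{N_j}$ are precisely the gluing points that let a condition inside $N_j$ and a condition reaching up past $N_j$ be amalgamated continuously.

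**The main obstacle.** I expect the principal difficulty to be the amalgamation/compatibility argument showing $\bar q$ and $q$ have a common extension: one has to check that the union of their interpolating functions can still be extended to strictly increasing continuous cofinal maps, and that the coherence clause~(5) does not generate a conflict between the entries contributed by $\bar q$ and those by $q$. This is where the closure under $\vec e$, the stratification hypothesis (ensuring $\ot(N_0 \cap \o_2) < \d_{N_j}$ so that lower models sit strictly below the relevant heights), and the careful bookkeeping of the $i^p$ values all have to work in concert. Getting the recursion that closes off clause~(5)(a) to terminate with a genuinely finite domain, while respecting clause~(3)'s uniqueness-of-$\tau$ constraint, is the delicate part of the verification.
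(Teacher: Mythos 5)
Your overall architecture is the right one and matches the paper's: absorb the models into $\mtcl N_{p^*}$, pin $h^{p^*}(\a,0,\o_1)$ at each $\d_N$ to $\ssup(N\cap\a)$, close off under clause (5)(a), and then prove genericity by reflecting into the model and amalgamating. But as written the proposal has two genuine gaps, both at points you yourself flag as ``delicate'' without resolving them. First, in the construction of $p^*$: the closing-off under (5)(a) forces you to introduce brand-new triples $(\a(k),\n,\d^\a_k)$ with $\a(k)=\ssup(N\cap\a)$, and the whole construction stands or falls with the choice of the new second coordinates $\n$. The paper chooses $i(\a,k)\in\d^\a_k\setminus\range(i^{p_0})$ with $i(\a,k)>\d_M$ for every model $M$ of height below $\d^\a_k$; this guarantees that no lower model contains both $\a(k)$ and $i(\a,k)$, so clause (7) imposes no unsatisfiable demands on the new entries and clause (3) is preserved. ``Choose the auxiliary values consistently'' does not identify this mechanism, and without it the verification that $p^*$ is a condition does not go through. (A smaller point: the recursion terminates after one step because each $\a(k)$ has countable cofinality, so (7)(c) does not recurse; this is worth saying.)

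Second, and more seriously, the genericity/amalgamation step cannot be carried out by reflecting ``the data of $q$ below $N_j$ into $N_j$'' at face value: the models of $\mtcl N_q$ of height below $\d_{N_j}$ are not elements of $N_j$, and the real danger is that the reflected condition $\bar q$ introduces new ordinals $\a\in X_{\bar q}$ of cofinality $\o_1$ that happen to lie in some low model $N\in\mtcl N_q$; clause (7)(c) would then require $\d_N\in\dom(h(\a,0,\o_1))$ with value $\ssup(N\cap\a)$, which the union of $q$ and $\bar q$ does not provide and cannot in general be repaired. The paper avoids this by (i) refining the club $E$ so that each $N^*$ carries an internal continuous $\in$-chain $(N^*_\n)_{\n<\d_{N^*}}$, (ii) choosing a level $N^+=N^*_{\n_0}$ capturing $(h^q\cup i^q)\cap N^*$ together with separating ordinals $\eta$, $\eta_\a\in N^+$ above which the low models of $\mtcl N_q$ have empty trace on $N^+$ (this is where stratification of $\mtcl N_q$ is used), and (iii) finding $r\in D\cap N^+$ via an isomorphism of coded structures whose new ordinals are pushed above these separators. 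The upshot is Claim \ref{cl3} of the paper --- no new $\a\in X_r\setminus X_q$ belongs to any low model of $\mtcl N_q$ --- and this is precisely the missing idea that makes the amalgamation work. Your sketch stops exactly where this idea is needed.
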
 

\begin{proof} Let $\t$ be such that $\mtcl P\in H(\t)$ and let $\mtcl N^*$ be a finite stratified collection of countable elementary submodels of $H(\t)$ containing $\vec e$.
We may assume that for each $N^*\in\mtcl N^*$, $N^*=\bigcup_{i<\d_{N^*}}N^*_\nu$, where $(N^*_\nu)_{\nu<\d_{N^*}}$ is a continuous $\in$-chain of countable elementary submodels of $H(\t)$ containing $\vec e$.
 In fact, if there is any cardinal $\chi>\t$ such that $N^*$ is of the form $N^{**}\cap H(\t)$ for a countable $N^{**}\elsub H(\chi)$ with $\vec e$, $\t\in N^{**}$, then $N^*$ is a continuous $\in$-chain of countable elementary submodels of $H(\t)$ as above. To see this, let $(x_n)_{n<\o}$ be an enumeration of $N^*$ and let $(\b_n)_{n<\o}$ be a strictly increasing sequence converging to $\d_{N^*}$. Then, by correctness of $N^{**}$, we may build a sequence $(\vec N^*_n)_{n<\o}$ of members of $N^{**}$ such that 
 \begin{enumerate}
 \item for each $n$, $\vec N^*_n = (N^*_{n, i})_{i\leq \b_n}$ is a continuous $\in$-chain of length $\b_n+1$ consisting of countable elementary submodesl of $H(\t)$ containing $\vec e$ and $x_n$;
 \item  $\vec N^*_{n+1}$ extends $\vec N^*_n$. \end{enumerate}
 $\bigcup_{n<\o}\vec N^*_n$ is then as desired.
 
Let $\mtcl N=\{N^*\cap H(\o_2)\,:\, N^*\in\mtcl N^*\}$. 
Let also $N_0\in\mtcl N$ be of minimal height and let $p_0\in N_0$ be a $\mtcl P$-condition. Given any $\a\in X_{p_0}$, let $(\a(k)\,:\,k<m_\a)$ be the strictly increasing enumeration of $$\{\ssup(N\cap\a)\,:\, N\in\mtcl N,\,\a\in N\}$$ and, for every $k<m_\a$, let $\d^\a_k=\d_N$ for any $N\in\mtcl N$ such that $\a\in N$ and $\ssup(N\cap\a)=\a(k)$. Note that by Fact \ref{agreement}, $\d^\a_k$ is well-defined for every $k<m_\a$ (i.e., $\d^\a_\k$ is independent from the choice of $N$ as long as $\a\in N$ and $\ssup(N\cap\a)=\a(k)$) as in fact $N\cap\a=N'\cap\a$ whenever $N$, $N'\in\mtcl N$ are such that $\a\in N\cap N'$ and $\d_N=\d_{N'}$. For each $\a\in X_{p_0}$ and $k<m_\a$, let $i(\a, k)\in\d^\a_k\setminus \range(i^{p_0})$ be such that $\d_N<i(\a,  k)$ for each $N\in\mtcl N$ with $\d_N<\d^\a_k$.

In order to prove the lemma, it suffices to show that $$p^*=(h^{p^*}, i^{p^*}, \mtcl N_{p_0}\cup\mtcl N)$$ is an $(N^*, \mtcl P)$-generic condition for each $N^*\in\mtcl N^*$, where $$\dom(h^{p^*})=\dom(h^{p_0})\cup \{(\a(k), i(\a, k), \d^\a_k)\,:\,\a\in X_{p_0},\,k<m_\a\}$$ and where for each $(\a, \n, \tau)\in\dom(h^{p^*})$:

\begin{enumerate}
\item if $(\a, \n, \tau)\in\dom(h^{p_0})$ and $\cf(\a)=\o$, then 
\begin{enumerate} 
\item $h^{p^*}(\a, \n, \tau)=h^{p_0}(\a, \n, \tau)$ and 
\item $i^{p^*}(\a, \n, \bar\tau)=i^{p_0}(\a, \n, \bar\tau)$ for each $\bar\tau\in\dom(h^{p_0}(\a, \n, \tau))\cap\Lim(\o_1)$; 
\end{enumerate} 
\item  if $(\a, \n, \tau)\in\dom(h^{p_0})$ and $\cf(\a)=\o_1$,\footnote{In this case of course $\n=0$ and $\tau=\o_1$.} then
\begin{enumerate}
\item $h^{p^*}(\a, \n, \tau)=h^{p_0}(\a, \n, \tau)\cup\{(\d^\a_k, \a(k))\,:\,k<m_\a\}$,
\item  $i^{p^*}(\a, \n, \bar\tau)=i^{p_0}(\a, \n, \bar\tau)$ for every $\bar\tau\in\dom(h^{p_0}(\a, \n, \tau))\cap\Lim(\o_1)$, and
\item $i^{p^*}(\a, \n, \d^\a_k)=i(\a, k)$ for each $k<m_\a$;
\end{enumerate}
\item if $\a\in X_{p_0}$ and $k<m_\a$, then 
\begin{enumerate}
\item $h^{p^*}(\a(k), i(\a, k), \d^\a_k)=h^{p_0}(\a, 0, \o_1)\cup\{(\d^\a_{j}, \a(j))\,:\, j<k\}$,
\item $i^{p^*}(\a(k), i(\a, k), \bar\tau)=i^{p_0}(\a, 0, \bar\tau)$ for every limit ordinal $\bar\tau\in\dom(h^{p_0}(\a, 0, \o_1))$, and
\item $i^{p^*}(\a(k), i(\a, k), \d^\a_j)=i(\a, j)$ for each $j<k$.
\end{enumerate}
\end{enumerate}

\begin{claim}\label{cl0}
If $\a_0<\a_1$ are such that $\a_0$, $\a_1\in X_{p_0}$, then $\a_0(k_0)<\a_0< \a_1(k_1)$ for all $k_0<m_{\a_0}$ and $k_1<m_{\a_1}$.
\end{claim}

\begin{proof} 
The inequality $\a_0(k_0)<\a_0$ is immediate given that $\a_0(k_0)=\ssup(M\cap\a_0)$ for some countable $M$. Also, we note that if $N\in\mtcl N$ is such that $\a_1\in N$ and $\a_1(k_1)=\ssup(N\cap\a_1)$, then $\a_0\in N$ by Fact \ref{agreement} since $\a_1\in N_0\cap N$ and $\d_{N_0}\leq\d_N$. Hence $\a_0<\ssup(N\cap\a_1)=\a_1(k_1)$.
\end{proof}

\begin{claim}\label{cl1}
For every $N\in\mtcl N$, $\a\in X_{p_0}$ and $k<m_\a$, if $\a(k)$, $i(\a, k)\in N$ and $j<k$, then $\a(j)\in N$.
\end{claim}

\begin{proof}
Since $i(\a, k)\in N$, we have that $\d_N\geq\d^\a_k$. But $\a(j)\in M\cap\a(k)$ for every $M\in\mtcl N$ such that $\a\in M$ and $\d_M=\d^\a_k$ (since $\a(j)=\ssup(e_\a``\d^\a_j)$), and $M\cap\a(k)\sub N\cap\a(\k)$ for every such $M$, where the inclusion follows from Fact \ref{agreement} since $\d_N\geq\d^\a_k$.
\end{proof}

The proof of the following claim is essentially the same.

\begin{claim}\label{cl2}
For all $N\in\mtcl N$ and $\a\in X_{p_0}$, if $\a\in N$, then $h^{p_0}(\a, 0, \o_1)\in N$ and $i^{p_0}(\a, 0, \bar\tau)\in N$ for every $\bar\tau\in\dom(h^{p_0}(\a, 0, \o_1))\cap\Lim(\o_1)$.
\end{claim}

Using the above three claims together with Fact \ref{agreement}, one can easily verify that $p^*$ is a $\mtcl P$-condition, and it obviously extends $p_0$. Let now $N^*\in\mtcl N^*$ and let us prove that $p^*$ is $(N^*, \mtcl P)$-generic. For this, let $D\in N^*$ be an open and dense subset of $\mtcl P$ and let $p\in D$ extend $p^*$. We will prove that there is a condition $r\in D\cap N^*$ compatible with $p$.

Let $(N^*_\n)_{\n<\d_{N^*}}$ be a continuous $\in$-chain of countable elementary submodels of $H(\t)$ containing $\vec e$ such that $N^*=\bigcup_{\n<\d_{N^*}}N^*_\n$. Since $\mtcl N_p$ is stratified and $N^*\cap H(\o_2)\in\mtcl N_p$, we may find some $\n_0<\d_{N^*}$ such that 
\begin{enumerate}
\item $(h^p\cup i^p)\cap N^*\sub N^*_{\n_0}$,
\item there is some $\eta\in N^*_{\n_0}\cap\o_2$ such that $[\eta,\,\o_2)\cap N^*_{\n_0}\cap N=\emptyset$ for every $N\in\mtcl N_p$ with $\d_N<\d_{N^*_{\n_0}}$, and
\item for every $\a\notin N^*_{\n_0}$ such that $(\a, \n, \tau)\in \dom(h^p)$ for some $\n$, $\tau$, and such that $\a^*=\min((N^*_{\n_0}\cap\o_2)\setminus\a)$ exists, there is some $\eta_\a\in N^*_{\n_0}\cap\a^*$ with $[\eta_\a,\,\a)\cap N^*_{\n_0}\cap N=\emptyset$ for every $N\in\mtcl N_p$ such that $\d_N<\d_{N^*_{\n_0}}$. 
\end{enumerate}

Given a $\mtcl P$-condition $q$, let $\mtbb M(q)$ be a structure with universe $$\mtcl U_q:=(\bigcup\{\{\a, \n, \tau, \x, \b\}\,:\, (\a, \n, \tau)\in\dom(h^q),\, (\x, \b)\in h^q(\a, \n, \tau)\})\cup i^q$$ 
coding $h^q$ and $i^q$ in some fixed canonical way. 

Let us denote $N^*_{\n_0}$ by $N^+$. Let $R=\mtcl U_p\cap N^+$. Working in $N^+$ we may find a condition $r\in D$ such that $\mtcl U_p\cap N^+\sub\mtcl U_r$, $X_p\cap N^+\sub X_r$, and for which there is an isomorphism $$\pi:\mtbb M(p)\into\mtbb M(r)$$ which is the identity on $\mtcl U_p\cap\mtcl U_r$ and is such that the following holds for each $(\a, \n, \tau)\in \dom(h^p)$:

\begin{enumerate}
\item if $\a\geq\ssup(N^+\cap\o_2)$, then $\pi(\a)>\eta$;
\item if $\a\notin N^+$ and $\a^*=\min((N^+\cap\o_2)\setminus\a)$ exists, then $\pi(\a)>\eta_\a$;
\item if $\a\in N^+$ but $\n\notin N^+$, then $\pi(\n)>\d_N$ for each $N\in\mtcl N_p$ such that $\d_N<\d_{N^+}$;
\item $\mtcl N_p\cup\mtcl N_r$ is stratified.
 \end{enumerate}
 
Such an $r$ can indeed be found in $N^+$ since the existence of a condition with the properties above is a true statement, as witnessed by $p$, which can be expressed over $H(\theta)$ by a sentence with parameters in $N^+$.

In order to finish the proof it suffices to show that $p$ and $r$ can be amalgamated into a condition $p'\in\mtcl P$. This condition $p'$ can be obtained as $p'=(h^{p'}, i^{p'}, \mtcl N_p\cup\mtcl N_r)$ by the following construction, very similar to that of $p^*$ from $p_0$. 

For every $\a\in X_r$, let $(\a(k)\,:\,k<m_\a)$ be the strictly increasing enumeration of $$\{\ssup(N\cap\a)\,:\, N\in\mtcl N_p,\,\a\in N\}$$ and, for every $k<m_\a$, let $\d^\a_k=\d_N$ for any $N\in\mtcl N_p$ such that $\a\in N$ and $\ssup(N\cap\a)=\a(k)$. As in the construction of $p^*$ from $p_0$, each $\d^\a_k$ is well-defined. For each $\a\in X_r$ and $k<m_\a$, let $i(\a, k)\in\d^\a_k\setminus\range(i^p)$ be such that $\d_N<i(\d,  k)$ for each $N\in\mtcl N_p$ with $\d_N<\d^\a_k$.

We define $h^{p'}$ and $i^{p'}$ by letting $h^{p'}$ be a function with $$\dom(h^{p'})=\dom(h^p)\cup\dom(h^r)\cup \{(\a(k), i(\a, k), \d^\a_k)\,:\,\a\in X_r,\,k<m_\a\}$$ and making the following definitions for each $(\a, \n, \tau)\in\dom(h^{p'})$ (where, given a condition $t\in\mtcl P$ and a tuple $(\a, \n, \tau)\notin \dom(h^t)$, we define $h^t(\a, \n, \tau)=\emptyset$ if $(\a, \n, \tau)\notin \dom(h^t)$, and similarly with $i^t$ in place of $h^t$):

\begin{enumerate}
\item if $(\a, \n, \tau)\in\dom(h^p)\cup\dom(h^r)$ and $\cf(\a)=\o$, then 
\begin{enumerate} 
\item $h^{p'}(\a, \n, \tau)=h^p(\a, \n, \tau)\cup h^r(\a, \n, \tau)$ and 
\item for each $\bar\tau$ in $(\dom(h^p(\a, \n, \tau))\cup\dom(h^r(\a, \n, \tau)))\cap\Lim(\o_1)$, $i^{p'}(\a, \n, \bar\tau)=i^p(\a, \n, \bar\tau)\cup i^r(\a, \n, \bar\tau)$; 
\end{enumerate} 
\item if $(\a, \n, \tau)\in\dom(h^p)$ and $\cf(\a)=\o_1$,\footnote{In which case of course $\n=0$ and $\tau=\o_1$.} then 
\begin{enumerate}
\item $h^{p'}(\a, \n, \tau)=h^p(\a, \n, \tau)\cup h^r(\a, \n, \tau)$,
\item $i^{p'}(\a, \n, \bar\tau)=i^p(\a, \n, \bar\tau)$ for every $\bar\tau\in\dom(h^p(\a, \n, \tau))\cap\Lim(\o_1)$, and
\item $i^{p'}(\a, \n, \bar\tau)=i^r(\a, \n, \bar\tau)$ for  every $\bar\tau\in\dom(h^r(\a, \n, \tau))\cap\Lim(\o_1)$;
\end{enumerate}
\item if $(\a, \n, \tau)\in\dom(h^r)$ and $\cf(\a)=\o_1$,\footnote{Once again, in this case $\n=0$ and $\tau=\o_1$.} then
\begin{enumerate}
\item $h^{p'}(\a, \n, \tau)=h^r(\a, \n, \tau)\cup\{(\d^\a_k, \a(k))\,:\,k<m_\a\}$,
\item  $i^{p'}(\a, \n, \bar\tau)=i^r(\a, \n, \bar\tau)$ for every $\bar\tau\in\dom(h^r(\a, \n, \tau))\cap\Lim(\o_1)$, and
\item $i^{p'}(\a, \n, \d^\a_k)=i(\a, k)$ for each $k<m_\a$;
\end{enumerate}
\item if $\a\in X_r$ and $k<m_\a$, then 
\begin{enumerate}
\item $h^{p'}(\a(k), i(\a, k), \d^\a_k)=h^r(\a, 0, \o_1)\cup\{(\d^\a_j, \a(j))\,:\, j<k\}$,
\item $i^{p'}(\a(k), i(\a, k), \bar\tau)=i^r(\a, 0, \bar\tau)$ for every limit ordinal $\bar\tau\in \dom(h^r(\a, 0, \o_1))$, and
\item $i^{p'}(\a(k), i(\a, k), \d^\a_j)=i(\a, j)$ for each $j<k$.
\end{enumerate}
\end{enumerate}

%Given any $\a\in X_r\setminus X_{p'}$, let $(\a(i))_{i<m_\a}$ be the strictly increasing enumeration of $$\{\ssup(N\cap\a)\,:\, N\in\mtcl N_{p'},\,\a\in N\}$$ and, for every $i<m_\a$, let $\d^\a_i=\d_N$ for any $N\in\mtcl N_{p'}$ such that $\a\in N$ and $\ssup(N\cap\a)=\a(i)$. 
%%Also, let us pick $m^{k, \b}_i\in \o$, for $(k, \b)\in X_r\setminus X_{p'}$ such that $\cf(\b)=\o_1$ and $i<n_\b$, in such a way that $m^{k, \b}_i\neq m^{k', \b'}_{i'}$ for all $(k, \b, i)\neq (k', \b', i')$ and such that $$m^{k, \b}_i\notin \bigcup\{\range(\iota^{p'}_{n, \a})\,:\,(n, \a)\in X_{p'}\}$$ and $$m^{k, \b}_i\notin \bigcup\{\range(\iota^r_{n, \a})\,:\,(n, \a)\in X_r\}$$ for every $(k, \b)\in X_r\setminus X_{p'}$ such that $\cf(\b)=\o_1$ and every $i<n_\b$. 

%We let $h^{p''}_{\a}$ and $n^{p''}_\a$ be as follows for each $\a\in X_{p'}\cup X_r$.

%\begin{enumerate}
%\item If $\a\in X_{p'}\setminus X_r$, then $h^{p''}_\a=h^{p'}_\a$ and $n^{p''}_\a=n^{p'}_\a$.
%\item If $\a\in X_{p'}\cap X_r$, then $h^{p''}_\a=h^{p'}_\a\cup h^r_\a$ and $n^{p''}_\a=n^{p'}_\a=n^r_\a$.
%\item If $\a\in X_r\setminus X_{p'}$, then $h^{p''}_\a=h^r_\a\cup\{(\d^\a_i, \a(i))\,:\,i<m_\a\}$ and $n^{p''}_\a=n^r_\a$.
%\end{enumerate}

The choice of $\eta$ and of $\eta_\a$, for $\x\in X_p\setminus N^+$ such that $\min((N^+\cap\o_2)\setminus\a)$ exists, together with the way $r$ has been fixed, immediately yields the following.

\begin{claim}\label{cl3}
For every $N\in\mtcl N_p$ and every $\a\in X_r\setminus X_p$, if $\a\in N$, then $\d_N\geq\d_{N^+}$. 
\end{claim}

Using Claim \ref{cl3}, we can prove the following versions of Claims \ref{cl0} and \ref{cl2}.

\begin{claim}\label{cl4}
If $\a_0<\a_1$ are such that $\a_0$, $\a_1\in X_r$, then $\a_0(k_0)<\a_0< \a_1(k_1)$ for all $k_0<m_{\a_0}$ and $k_1<m_{\a_1}$.
 \end{claim}
 
\begin{claim}\label{cl6}
For all $N\in\mtcl N_p$ and $\a\in X_r$, if $\a\in N$, then $h^r(\a, 0, \o_1)\in N$ and $i^r(\a, 0, \bar\tau)\in N$ for every $\bar\tau\in\dom(h^r(\a, 0, \o_1))\cap\Lim(\o_1)$.
\end{claim}

We also have the following counterpart of Claim \ref{cl1}, proved in exactly the same way.

 \begin{claim}\label{cl5}
For every $N\in\mtcl N_p$, $\a\in X_r$ and $k<m_\a$, if $\a(k)$, $i(\a, k)\in N$ and $j<k$, then $\a(j)\in N$.
\end{claim}

Using Claims \ref{cl4}, \ref{cl6} and \ref{cl5}, together with Fact \ref{agreement} and the particular choice of $r$, it is not difficult to verify that $p'$ is a condition in $\mtcl P$, which finishes the proof of the lemma since then $p'$ of course extends both $p$ and $r$. 
\end{proof}

We will need the following four density lemmas.

\begin{lemma}\label{density3} For every $\a<\o_1$ of countable cofinality and every $p\in\mtcl P$ there is a condition $p'\in\mtcl P$ extending $p$ and such that $(\a, 0, \o)\in\dom(h^{p'})$. 
\end{lemma}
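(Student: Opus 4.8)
The plan is to get $p'$ from $p$ by adjoining a single new triple carrying the empty partial function, changing nothing else. If $(\a,0,\o)\in\dom(h^p)$ there is nothing to do, so assume otherwise and put $p'=(h^{p'},i^p,\mtcl N_p)$ with $h^{p'}=h^p\cup\{((\a,0,\o),\emptyset)\}$; in other words $p'$ agrees with $p$ everywhere and additionally sets $h^{p'}(\a,0,\o)=\emptyset$.

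The bulk of the work is then to check that $p'$ is a condition, and here the point is that the new triple carries the \emph{empty} function, so every delicate requirement is vacuous. Since $\a$ is a countable limit ordinal, $\cf(\a)=\o$, and any cofinal $\o$-sequence in $\a$ witnesses that $\emptyset$ extends to a strictly increasing and continuous $f\colon\o\into\a$ with range cofinal in $\a$; this gives clause~(1) for the new triple, and clause~(2)(b) holds because $\o\in\Lim(\o_1)$. As $\dom(h^{p'}(\a,0,\o))=\emptyset$ has no limit points, the coherence clause~(5) says nothing about the new triple and no new element is forced into $\dom(i^{p'})$, so retaining $i^{p'}=i^p$ is compatible with clause~(4). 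For each $N\in\mtcl N_p$ with $\a,0\in N$, clause~(7) for the new triple reduces to $\o\in N$, which always holds, together with empty statements about $h^{p'}(\a,0,\o)$, clause~(7)(c) being irrelevant since $\cf(\a)=\o\neq\o_1$. Clause~(6) and the three clauses defining extension are then immediate, since we have only enlarged $\dom(h^p)$ without altering any value of $h^p$ or $i^p$ and without discarding any model of $\mtcl N_p$; hence $p'$ extends $p$.

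The single genuinely substantive point, which I expect to be the main obstacle, is clause~(3): for the pair $(\a,0)$ there must be at most one $\tau$ with $(\a,0,\tau)\in\dom(h^{p'})$. Adjoining $(\a,0,\o)$ is harmless exactly when no triple $(\a,0,\tau)$ with $\tau\neq\o$ already sits in $\dom(h^p)$, so the crux is to dispose of such a competing triple first, reducing to the clash-free case; granting this, $p'$ is a well-defined condition extending $p$ and lying in the target set, which establishes the density.
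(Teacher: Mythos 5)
Your construction is exactly the paper's: the paper's entire proof of this lemma is the single line $p'=(h^p\cup\{((\a,0,\o),\emptyset)\},i^p,\mtcl N_p)$, and your verification that the empty function renders clauses (1), (2), (4), (5) and (7) vacuous or trivial is correct. The one place where you go beyond the paper is the clause-(3) issue, and there your proposed repair does not work: you suggest one should ``dispose of such a competing triple first'', but the extension relation requires $\dom(h^{p})\sub\dom(h^{p'})$, so a triple $(\a,0,\tau)$ with $\tau\neq\o$ already in $\dom(h^{p})$ can never be discarded, and in that case \emph{no} extension of $p$ can satisfy the literal conclusion $(\a,0,\o)\in\dom(h^{p'})$. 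This is really a defect of the lemma's statement, which the paper's one-line proof silently ignores, rather than of the construction: what is actually needed for Corollary \ref{cor00} is only that the set of conditions $p'$ with $(\a,\n,\tau)\in\dom(h^{p'})$ for \emph{some} $\n$, $\tau$ is dense, and for that the competing-triple case is the trivial one, since $p'=p$ already works. So you were right to isolate clause (3) as the only substantive point, but the resolution is to weaken the target (or observe that the problematic case needs no action), not to remove the clash.
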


\begin{proof}
We simply  let $p'=(h^p\cup\{((\a, 0, \o), \emptyset)\}, i^p, \mtcl N_p)$.
\end{proof}

\begin{lemma}\label{density0} For every $\a\in S^2_1$ and every $p\in\mtcl P$ there is a condition $p'\in\mtcl P$ extending $p$ and such that $\a\in X_{p'}$.
 \end{lemma}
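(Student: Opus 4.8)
The plan is to mimic the construction of $p^*$ from $p_0$ in the proof of Lemma \ref{aleph15}, specialized to the single ordinal $\a$ and keeping the family of models fixed. If $\a\in X_p$ there is nothing to prove, so I assume $\a\notin X_p$ and build $p'=(h^{p'}, i^{p'}, \mathcal N_p)$ whose only genuinely new feature is the triple $(\a, 0, \o_1)$, together with the lower-level data that coherence and genericity force upon it.

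First I would record the sup-points that clause (7)(c) demands: let $(\a(k)\,:\,k<m_\a)$ enumerate $\{\ssup(N\cap\a)\,:\,N\in\mathcal N_p,\,\a\in N\}$ increasingly, and for each $k$ let $\d^\a_k=\d_N$ for some $N$ with $\a\in N$ and $\ssup(N\cap\a)=\a(k)$; as in Lemma \ref{aleph15} this is well-defined by Fact \ref{agreement}, and the two enumerations increase together, so that $\d^\a_j<\d^\a_k$ iff $j<k$. Since $\cf(\a)=\o_1$, each $\a(k)<\a$ has cofinality $\o$ and each $\d^\a_k$ is a countable limit ordinal, matching clauses (2)(a)--(b). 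I then pick indices $i(\a, k)\in\d^\a_k\setminus\range(i^p)$ with $\d_N<i(\a, k)$ for every $N\in\mathcal N_p$ of height below $\d^\a_k$, arranging in addition (only finitely many values are excluded) that no triple of the form $(\a(k), i(\a, k), \tau)$ already lies in $\dom(h^p)$, so that clause (3) is respected.

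Next I would set $\dom(h^{p'})=\dom(h^p)\cup\{(\a, 0, \o_1)\}\cup\{(\a(k), i(\a, k), \d^\a_k)\,:\,k<m_\a\}$, keep $h^{p'}, i^{p'}$ equal to $h^p, i^p$ on the old domain, and put $h^{p'}(\a, 0, \o_1)=\{(\d^\a_k, \a(k))\,:\,k<m_\a\}$ with $i^{p'}(\a, 0, \d^\a_k)=i(\a, k)$, and $h^{p'}(\a(k), i(\a, k), \d^\a_k)=\{(\d^\a_j, \a(j))\,:\,j<k\}$ with $i^{p'}(\a(k), i(\a, k), \d^\a_j)=i(\a, j)$ for $j<k$. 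The coherence triples $(\a(k), i(\a, k), \d^\a_k)$ are included precisely so that clause (5)(a) is met at the new limit points $\d^\a_k\in\dom(h^{p'}(\a, 0, \o_1))$: indeed $h^{p'}(\a, 0, \o_1)\restr\d^\a_k=\{(\d^\a_j, \a(j))\,:\,\d^\a_j<\d^\a_k\}=\{(\d^\a_j, \a(j))\,:\,j<k\}$, which is exactly $h^{p'}(\a(k), i(\a, k), \d^\a_k)$.

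It then remains to verify that $p'$ is a condition. For clause (1), the finite function $\{(\d^\a_k, \a(k))\}$ extends to a strictly increasing continuous cofinal $f:\o_1\into\a$ because its points are strictly increasing, each $\a(k)$ is a limit ordinal above $\ssup_{j<k}\a(j)$, and, since $\cf(\a(k))=\o$ and $\d^\a_k$ is a countable limit, one can interpolate so that $f\restr\d^\a_k$ is cofinal in $\a(k)$, while $\cf(\a)=\o_1$ leaves room to reach $\a$ cofinally on the final tail. The main work, and the step I expect to be the real obstacle, is clause (7) for the models in $\mathcal N_p$ against the new triples; this proceeds exactly as in the verification that $p^*$ is a $\mathcal P$-condition in Lemma \ref{aleph15}, via the arguments of Claims \ref{cl0}--\ref{cl1}. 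The decisive points are that $i(\a, k)\in N$ forces $\d_N\geq\d^\a_k$ (by the choice of $i(\a, k)$), that $\a(k)=\ssup(e_\a``\d^\a_k)\in N$ whenever $\d^\a_k\in N$ (by closure of $N$ under $\vec e$), and that $N\cap\a$ has no maximum, so $\a(k)\notin N$ when $\d_N=\d^\a_k$; together these give (7)(a)--(d) for both $(\a, 0, \o_1)$ (using $i(\a, k)<\d^\a_k$ for (7)(d), and the construction itself for (7)(c), since $h^{p'}(\a, 0, \o_1)(\d_N)=\ssup(N\cap\a)$) and for the coherence triples. With Fact \ref{agreement} these yield clauses (3)--(7). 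Finally $p'$ extends $p$, since $\dom(h^p)\sub\dom(h^{p'})$, the values agree on $\dom(h^p)$, and $\mathcal N_{p'}=\mathcal N_p$; and $\a\in X_{p'}$ by construction.
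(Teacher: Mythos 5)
Your construction is exactly the paper's proof of this lemma: the same enumeration $(\a(k))_{k<m_\a}$ of the sups $\ssup(N\cap\a)$, the same choice of indices $i(\a,k)\in\d^\a_k\setminus\range(i^p)$ above the heights of the lower models, and the same definition of $h^{p'}$ and $i^{p'}$ on the new triples $(\a,0,\o_1)$ and $(\a(k),i(\a,k),\d^\a_k)$, with the verification deferred to the arguments of Claims \ref{cl0}--\ref{cl1} and Fact \ref{agreement}. Your added care in choosing $i(\a,k)$ to avoid accidental clashes with $\dom(h^p)$ (for clause (3)) is a reasonable extra precaution the paper leaves implicit.
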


\begin{proof}
We may obviously assume $\a\notin X_p$. We may also assume that $\a\in N$ for some $N\in\mtcl N$ as the conclusion in the other case is immediate. 
Let $(\a(k)\,:\,k<m_\a)$ be the strictly increasing enumeration of $$\{\ssup(N\cap\a)\,:\, N\in\mtcl N_p,\,\a\in N\}$$ and, for every $k<m_\a$, let $\d^\a_k=\d_N$ for any $N\in\mtcl N$ such that $\a\in N$ and $\ssup(N\cap\a)=\a(k)$. As usual, using Fact \ref{agreement} we have that each $\d^\a_k$ is well-defined. For each $\a\in X_r$ and $k<m_\a$, let $i(\a, k)\in\d^\a_k\setminus\range(i^p)$ be such that $\d_N<i(\d,  k)$ for each $N\in\mtcl N_p$ with $\d_N<\d^\a_k$.

We can now easily verify that the following is a condition $p'\in\mtcl P$ as required: $p'=(h^{p'}, i^{p'}, \mtcl N_p)$, where $$\dom(h^{p'})=\dom(h^p)\cup \{(\a, 0, \o_1)\} \cup\{(\a(k), i(\a, k), \d^\a_k)\,:\,k<m_\a\}$$ and where for each $(\a, \n, \tau)\in\dom(h^{p'})$:

\begin{enumerate}
\item if $(\a, \n, \tau)\in\dom(h^p)$, then 
\begin{enumerate} 
\item $h^{p'}(\a, \n, \tau)=h^p(\a, \n, \tau)$ and 
\item $i^{p'}(\a, \n, \bar\tau)=i^p(\a, \n, \bar\tau)$ for each $\bar\tau\in\dom(h^p(\a, \n, \tau))\cap\Lim(\o_1)$; 
\end{enumerate} 
\item $h^{p'}(\a, 0, \o_1)=\{(\d^\a_k, \a(i))\,:\,i<m_\a\}$ and $i^{p'}(\a, 0, \d^\a_k)=i(\a, k)$ for each $k<m_\a$;
\item for each $k<m_\a$, 
\begin{enumerate}
\item $h^{p'}(\a(k), i(\a, k), \d^\a_k)=\{\d^\a_j\,:\,j<k\}$ and
\item $i^{p'}(\a(k), i(\a, k), \d^\a_j)=i(\a, j)$ for each $j<k$.
\end{enumerate}
\end{enumerate}
%The main point is that, by Fact \ref{agreement}, if $N_0$, $N_1\in\mtcl N_p$ are such that $\a\in N_0\cap N_1$ and $\d_{N_0}\leq\d_{N_1}$, then $N_0\cap \a\sub N_1\cap\a$. Hence, letting $n<\o$ be such that $n\neq n^p_\b$ for any $\b\in X_p$, it suffices to set $$p'=(((h^{p'}_{\b}, n^{p'}_\b)\,:\,\b\in X_p\cup\{\a\}), \mtcl N_p),$$ where
%\begin{enumerate}
%\item $h^{p'}_{\b}=h^p_{\b}$ and $n^{p'}_\b=n^p_\b$ for all $\b\in X_p$, and where 
%\item $h^{p'}_\a=\{(\d_N, \ssup(N\cap\a))\,:\, N\in\mtcl N_p,\,\a\in N\}$ and $n^{p'}_\a=n$.
%\end{enumerate}
\end{proof}

Lemmas \ref{density1} and \ref{density2} are also easy.

\begin{lemma}\label{density1} For every $p\in\mtcl P$, $\a\in X_p$, and every $\n<\o_1$ there is a condition $p'\in\mtcl P$ extending $p$ and such that $\n\in \dom(h^{p'}(\a, 0, \o_1))$.  \end{lemma}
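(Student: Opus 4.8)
The plan is to insert $\n$ into the domain of a single function. Since $\a \in X_p$ we have $\cf(\a) = \o_1$, and then clause (2) of the definition of $\mtcl P$ forces $(\a, 0, \o_1) \in \dom(h^p)$; write $s = h^p(\a, 0, \o_1)$, a \emph{finite} function by clause (1). We may assume $\n \notin \dom(s)$, as otherwise $p' = p$ already works. The task is to choose a value $\b < \a$, put $h^{p'}(\a, 0, \o_1) = s \cup \{(\n, \b)\}$ while leaving every other value of $h^p$ and $i^p$ unchanged and keeping $\mtcl N_{p'} = \mtcl N_p$, and --- only when $\n$ is a limit ordinal --- adjoin one further triple to repair coherence. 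The requirements on $\b$ are that $s \cup \{(\n, \b)\}$ remain extendable to a strictly increasing continuous cofinal $f : \o_1 \into \a$ (call this \emph{goodness}), and that clause (7)(b) be preserved, i.e., $\b \in N$ whenever $\n \in N \in \mtcl N_p$ with $\a \in N$; recall that $\n \in N$ is equivalent to $\n < \d_N$.

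For the choice of $\b$ I would split into two cases. If no member of $\mtcl N_p$ containing $\a$ has height above $\n$, then $\n \notin N$ for every $N \in \mtcl N_p$ with $\a \in N$, so (7)(b) imposes no constraint and I may take $\b = f(\n)$ for any witness $f$ to the goodness of $s$. Otherwise let $N_*$ be the member of $\mtcl N_p$ of least height among those that contain $\a$ and have height exceeding $\n$. Since $s$ is finite and (7)(b) holds for $p$, the restriction $s \restr N_*$ lies in $N_*$, so by elementarity $N_*$ contains a finite good extension $t$ of $s \restr N_*$ with $\n \in \dom(t)$; set $\b = t(\n) \in N_* \cap \a$. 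Nestedness of the sets $N \cap \a$ for $\a \in N$, ordered by height (Fact \ref{agreement}), then propagates $\b$ into every larger-height model containing $\a$, which is exactly what (7)(b) demands. Goodness of $s \cup \{(\n, \b)\}$ follows by splicing $t$ below $\d_{N_*}$ with a witness for $s$ above $\d_{N_*}$, using that $\d_{N_*} \in \dom(s)$ with value $\ssup(N_* \cap \a)$ by (7)(c) and that $\cf(\d_{N_*}) = \cf(\ssup(N_* \cap \a)) = \o$.

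The remaining work arises only when $\n$ is a limit ordinal. Then $\cf(\b) = \cf(\n) = \o$ by continuity, and clause (5)(a) forces the adjunction of a triple $(\b, i', \n)$ with $h^{p'}(\b, i', \n) = s \restr \n$ and with indices inherited from those of $s$, i.e., $i^{p'}(\b, i', \bar\tau) = i^p(\a, 0, \bar\tau)$ for each limit $\bar\tau \in \dom(s) \cap \n$. The key point keeping this finite is that every such $\bar\tau$ is already a limit point of $s = h^p(\a, 0, \o_1)$, so the coherence triples witnessing (5)(a) for them are present in $p$ and are left untouched; exactly one new triple is created, with no recursion. For the fresh index $i' = i^{p'}(\a, 0, \n)$ I would proceed exactly as in Lemma \ref{density0} and Lemma \ref{aleph15}, choosing $i' \in (\n, \d_{N_*}) \setminus \range(i^p)$ (or $i' \in (\n, \o_1) \setminus \range(i^p)$ when $N_*$ does not exist). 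The bound $i' > \n$ ensures that any $N \in \mtcl N_p$ containing both $\b$ and $i'$ has $\d_N > \n$, hence $\n \in N$, giving (7)(a) for the new triple, while $i' < \d_{N_*}$ ensures $i' \in N$ for every $N \ni \a$ with $\n \in N$, giving (7)(d) at $\n$ for the triple $(\a, 0, \o_1)$.

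Finally, that $p'$ is a genuine condition is checked by the same bookkeeping as in Lemma \ref{aleph15}: one proves the evident one-point analogues of Claims \ref{cl0}, \ref{cl1} and \ref{cl2} and combines them with Fact \ref{agreement} to verify clauses (5) and (7) for $p'$. I expect the only real obstacle to be this last verification --- specifically, confirming that the cofinality-$\o$ triple $(\b, i', \n)$ satisfies (7)(b) and (7)(d) against every model of $\mtcl N_p$, including any that contains $\b$ but not $\a$ --- but since this triple is of exactly the same shape as the cofinality-$\o$ coherence triples created in the proof of Lemma \ref{aleph15}, it is handled by the identical argument and no new idea is needed. This is why the lemma is indeed routine.
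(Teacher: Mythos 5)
The paper offers no proof of this lemma (it is simply declared easy alongside Lemma \ref{density2}), so your argument can only be measured against the definition of $\mtcl P$ itself; measured against that, it has a genuine gap. The problem is your assertion that the coherence triples already present in $p$ ``are left untouched'' and that ``exactly one new triple is created, with no recursion.'' Clause (5)(a) requires, for every limit $\bar\tau\in\dom(h^{p'}(\a,0,\o_1))$, that $h^{p'}(h^{p'}(\a,0,\o_1)(\bar\tau), i^{p'}(\a,0,\bar\tau),\bar\tau)$ be \emph{equal} to $h^{p'}(\a,0,\o_1)\restr\bar\tau$. If $\bar\tau\in\dom(s)$ is a limit ordinal with $\bar\tau>\n$ --- and in your main case there always is one, namely $\d_{N_*}$, which lies in $\dom(s)$ by clause (7)(c) --- then $h^{p'}(\a,0,\o_1)\restr\bar\tau$ contains the new pair $(\n,\b)$, whereas you leave $h^{p'}(s(\bar\tau), i^{p}(\a,0,\bar\tau),\bar\tau)=s\restr\bar\tau$. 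So the object you build violates (5)(a) and is not a condition. The pair $(\n,\b)$ must be propagated into every iterated coherence triple of $(\a,0,\o_1)$ whose third coordinate exceeds $\n$; this terminates because everything is finite, but it is not a single new triple, and when $\n$ is a limit each updated triple also acquires $\n$ as a new limit point of its domain and hence needs the triple $(\b,\cdot,\n)$ assigned to it via $i^{p'}$.

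This omission also invalidates your analysis of the constraints on $\b$. Once the propagation is carried out, clause (7)(b) applied to an updated triple $(s(\bar\tau), i^{p}(\a,0,\bar\tau),\bar\tau)$ forces $\b\in N$ for every $N\in\mtcl N_p$ containing both $s(\bar\tau)$ and $i^{p}(\a,0,\bar\tau)$ (any such $N$ automatically has $\d_N>\bar\tau>\n$ by (7)(a)), and such $N$ need not contain $\a$. Your $N_*$, chosen as the least-height model \emph{containing $\a$} with $\d_{N_*}>\n$, therefore does not account for all the models that must see $\b$. One has instead to take the model of minimal height among all members of $\mtcl N_p$ that contain the first two coordinates of $(\a,0,\o_1)$ or of any of its iterated coherence triples with third coordinate above $\n$, choose $\b$ and the good extension inside that model, and then use (7)(a), (7)(b) and Fact \ref{agreement} to push $\b$ into all the others. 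The parts of your write-up concerning goodness (the splicing across $\d_{N_*}$) and the placement of $i'$ are fine as far as they go, but as written the construction does not produce a condition.
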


\begin{lemma}\label{density2} For every $p\in\mtcl P$, $\a\in X_p$, every nonzero limit ordinal $\d\in \dom(h^p(\a, 0, \o_1))$, and every $\eta<h^p(\a, 0, \o_1)(\d)$ there is a condition $p'\in\mtcl P$ extending $p$ together with some $\n\in \dom(h^{p'}(\a, 0, \o_1))\cap\d$ such that $h^{p'}(\a, 0, \o_1)(\nu)>\eta$.   
\end{lemma}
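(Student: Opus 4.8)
The plan is to extend $p$ by adjoining a single new pair to the top-level function $h^p(\a,0,\o_1)$ at a point just below $\d$, and then to propagate that pair downward through the coherence tree of $(\a,0,\o_1)$ so as to preserve clause~(5). Throughout write $g=h^p(\a,0,\o_1)$ and fix, using clause~(1), a strictly increasing continuous $f:\o_1\into\a$ with cofinal range extending $g$. Since $\d$ is a nonzero limit ordinal, continuity gives $g(\d)=f(\d)=\ssup\{f(\rho)\,:\,\rho<\d\}$, so from $\eta<g(\d)$ we obtain some $\rho<\d$ with $f(\rho)>\eta$.

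I would choose the new point $\n$ as follows. Let $\mu_0$ be the maximum of the finite set $(\dom(g)\cap\d)\cup\{\d_N\,:\,N\in\mtcl N_p,\ \d_N<\d\}$, so $\mu_0<\d$, and take a \emph{successor} ordinal $\n$ with $\mu_0<\n<\d$ and $f(\n)>\eta$ (possible since $\ssup\{f(\rho)\,:\,\mu_0<\rho<\d\}=f(\d)>\eta$). Taking $\n$ a successor is what keeps things easy: no new limit ordinal enters any domain, so I may set $i^{p'}=i^p$ and $\mtcl N_{p'}=\mtcl N_p$ with clause~(4) untouched. Moreover, as $\n>\max(\dom(g)\cap\d)$ and $\d\in\dom(g)$, the least element of $\dom(g)$ above $\n$ is $\d$ itself, so for monotonicity the value attached to $\n$ must lie strictly between $v^-:=\max\{g(\mu)\,:\,\mu\in\dom(g)\cap\n\}$ and $g(\d)$. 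For clause~(7)(b) this value must also lie in every $N\in\mtcl N_p$ with $\n\in N$ — equivalently, by the choice of $\n$, with $\d_N\ge\d$ — that contains the first two coordinates of a function to which the pair is added. Letting $N_0$ be such a model of least height, I observe that $N_0\cap g(\d)$ is cofinal in $g(\d)$: if $\d_{N_0}=\d$ this is because $\ssup(N_0\cap\a)=g(\d)$, while if $\d_{N_0}>\d$ then $\d\in N_0$, hence $g(\d)\in N_0$ by clause~(7)(b), and $\cf(g(\d))=\o$ gives a cofinal $\o$-sequence in $g(\d)$ inside $N_0$. Thus I can pick $\beta\in N_0$ with $\max(v^-,\eta)<\beta<g(\d)$, and by Fact~\ref{agreement} this $\beta$ lies in every taller relevant model as well.

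I would then keep $\dom(h^{p'})=\dom(h^p)$, $i^{p'}=i^p$, $\mtcl N_{p'}=\mtcl N_p$, let $U$ be the closure of $\{(\a,0,\o_1)\}$ under the coherence descent of clause~(5)(a) through positions $>\n$ together with all coherence parents of its members, and define $h^{p'}$ to agree with $h^p$ off $U$ and to send each $(\gamma,j,\sigma)\in U$ to $h^p(\gamma,j,\sigma)\cup\{(\n,\beta)\}$. The key structural facts to record are that $U$ is a finite subset of $\dom(h^p)$ and that each of its members $(\gamma,j,\sigma)$ satisfies $\sigma\ge\d$ and $h^p(\gamma,j,\sigma)\restr\d=g\restr\d$, with value $g(\d)$ at $\d$ whenever $\d$ lies in its domain; this is exactly what makes inserting $(\n,\beta)$ consistent (again $\d$ is the least domain element above $\n$). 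With these choices, $p'$ extends $p$ and $\n\in\dom(h^{p'}(\a,0,\o_1))\cap\d$ with $h^{p'}(\a,0,\o_1)(\n)=\beta>\eta$, as required.

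The real content, and the main obstacle, is verifying that $p'$ is a condition, i.e.\ clauses~(1), (5) and (7). Clause~(1) is routine: each modified function still extends to a strictly increasing continuous cofinal map, obtained by rerouting $f$ (resp.\ the original extension of the descendant) through the successor point $\n$ with value $\beta$. Clause~(7) reduces to (7)(b) for the new pair, which is precisely why $\beta$ was placed inside the relevant submodels. The delicate step is clause~(5): one must check that the new pair appears in a descendant exactly when the coherence equation at its parent demands it, which is guaranteed by closing $U$ under \emph{both} descent and parents, and that the cascade terminates — it does, because the children sitting at position $\d$ have all their domain below $\n$. I expect the bookkeeping for clause~(5) in the possible presence of non-unique coherence parents to be the only point requiring genuine care.
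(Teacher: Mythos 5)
The paper gives no argument for this lemma (it is dismissed as ``easy'' alongside Lemma \ref{density1}), so there is no official proof to compare against; your overall strategy --- insert a single pair $(\n,\b)$ at a successor position $\n$ just below $\d$ and propagate it through the closure $U$ of $\{(\a,0,\o_1)\}$ under clause~(5) descent and ascent --- is the natural one, and your structural analysis of $U$ (every member has $\s\ge\d$, restricts to $g\restr\d$ below $\d$, and carries the value $g(\d)$ at $\d$ whenever $\d$ is in its domain) is correct.

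There is, however, a genuine gap at the choice of $\b$, i.e.\ in the verification of clause~(7)(b). You choose $\b$ in a single relevant model $N_0$ of least height and assert that Fact~\ref{agreement} places $\b$ in ``every taller relevant model''. This ignores other relevant models of the \emph{same} height as $N_0$, and even for a taller $N_1$ Fact~\ref{agreement} only gives $N_0\cap\m\sub N_1$ for $\m\in N_0\cap N_1\cap\o_2$; you never exhibit such a $\m$ above $\b$. Concretely, $U$ may contain two triples $(\g_0,0,\o_1)$ and $(\g_1,0,\o_1)$ with $\g_0\neq\g_1$ (this happens exactly when $h^p(\g_0,0,\o_1)$ and $h^p(\g_1,0,\o_1)$ share a clause-(5) child at some limit $\s\ge\d$), together with models $N_0\ni\g_0$ and $N_1\ni\g_1$ in $\mtcl N_p$ both of height exactly $\d$ with $\g_0\notin N_1$ and $\g_1\notin N_0$. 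Clause~(7)(c) forces $\ssup(e_{\g_0}``\d)=\ssup(e_{\g_1}``\d)=g(\d)$ and $N_i\cap g(\d)=e_{\g_i}``\d$, but nothing in the definition of $\mtcl P$ forces $e_{\g_0}``\d\cap e_{\g_1}``\d$ to be cofinal in $g(\d)$: by Corollary~\ref{agreement-0} it is merely a common initial segment of both. If it is bounded by some $\m<g(\d)$ and $\eta\ge\m$, then clause~(7)(b) for $p'$ demands $\b\in N_0\cap N_1\cap(\eta,g(\d))=\emptyset$, and no choice of $\n$ helps, since $\n<\d=\d_{N_0}=\d_{N_1}$ is unavoidable. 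You must either show that this configuration cannot occur in a condition (which is a property of $\vec e$ and of the coherence data, not something your argument establishes) or handle it; as written the proof does not go through in this case. A second, minor and repairable point: for clause~(1) the value $\b$ must also be large enough to leave room for a continuous strictly increasing climb from the last old domain point of $g$ below $\n$ up to $\n$; $\b>\max(v^-,\eta)$ does not suffice in general (one must account for the order type of the interval between $\max(\dom(g)\cap\d)$ and $\n$), though cofinality of $N_0\cap g(\d)$ in $g(\d)$ lets you take $\b$ large enough once the first issue is resolved.
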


Given a $\mtcl P$-generic filter $G$, a limit ordinal $\a<\o_2$, and $\n<\o_1$, we define $C^G_{\a, \n}$ as $$\bigcup\{\range(h^p(\a, \n, \tau))\,:\, p\in G,\,(\a, \n, \tau)\in\dom(h^p)\mbox{ for some }\tau\}$$ Let also $$\mtcl  C^G_\a=\{C^G_{\a, \n}\,:\,\n<\o_1,\,C^G_{\a, \n}\neq\emptyset\}$$ 

We immediately obtain the following corollary from Lemmas \ref{aleph15}, the density lemmas \ref{density0}--\ref{density2}, and the definition of $\mtcl P$.

\begin{corollary}\label{cor00} If $G$ is a $\mtcl P$-generic filter over $V$,  then $$(\mtcl C^G_\a\,:\,\a\in\Lim(\o_2))$$ is a $\Box_{\o_1, \o_1}$-sequence. 
\end{corollary}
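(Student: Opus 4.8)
The plan is to check, requirement by requirement, that $(\mtcl C^G_\a:\a\in\Lim(\o_2))$ meets the definition of a $\Box_{\o_1,{<}\o_2}$-sequence (recall that $\Box_{\o_1,\o_1}$ abbreviates $\Box_{\o_1,{<}\o_2}$), reading totality, continuity and cofinality of the generic threads off the density lemmas and reading coherence off condition~(5) in the definition of $\mtcl P$. Since $\mtcl P$ is proper and has the $\al_2$-c.c.\ by Lemma~\ref{aleph15} and Proposition~\ref{al15-2}, all cardinals are preserved, so $\o_1$ and $\o_2$ are absolute between $V$ and $V[G]$ and the statement makes sense in $V[G]$. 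First, each $C^G_{\a,\n}$ is well behaved: by condition~(3) there is at most one $\tau$ with $(\a,\n,\tau)\in\dom(h^p)$, and by the extension relation this $\tau$ is common to all relevant $p\in G$ while the finite functions $h^p(\a,\n,\tau)$ form a $\sub$-chain; hence $f_{\a,\n}:=\bigcup\{h^p(\a,\n,\tau):p\in G\}$ is a single strictly increasing function with $\dom(f_{\a,\n})\sub\tau$ and $\range(f_{\a,\n})=C^G_{\a,\n}$. In particular $\ot(C^G_{\a,\n})=\ot(\dom(f_{\a,\n}))\leq\tau\leq\o_1$, which is requirement~(1)(b), and since $\n$ ranges over $\o_1$ we get $|\mtcl C^G_\a|\leq\o_1<\o_2$, which is requirement~(1)(c).

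The substantive point is requirement~(1)(a): each nonempty $C^G_{\a,\n}$ is a club of $\a$. Consider first a thread with $\cf(\a)=\o_1$, so that $\n=0$ and $\tau=\o_1$. Lemma~\ref{density1} shows, by genericity, that $\dom(f_{\a,0})=\o_1$; Lemma~\ref{density2} shows that $f_{\a,0}$ is continuous, so that its range is closed in $\a$; and cofinality of the range in $\a$ I would extract from condition~(7)(c), which forces $f_{\a,0}(\d_N)=\ssup(N\cap\a)$ for every model $N$ in a condition with $\a\in N$, together with the density fact that for each $\b<\a$ one can adjoin to a condition a model containing both $\a$ and $\b$, so that $\{\ssup(N\cap\a):N\in\bigcup_{p\in G}\mtcl N_p,\ \a\in N\}$ is cofinal in $\a$. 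Thus $C^G_{\a,0}$ is a club of $\a$ of order type $\o_1$. A thread $C^G_{\bar\a,\m}$ with $\cf(\bar\a)=\o$ produced by coherence will be seen below to equal $C^G_{\a,0}\cap\bar\a$ for a limit point $\bar\a$ of a club $C^G_{\a,0}$, and hence is automatically a club of $\bar\a$ of order type $<\o_1$.

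For the coherence requirement~(2), fix a nonempty $C=C^G_{\a,\n}$ and a limit point $\bar\a<\a$ of $C$. Since $f_{\a,\n}$ is total and continuous, $C$ is closed in $\a$, so $\bar\a\in C$, say $\bar\a=f_{\a,\n}(\bar\tau)$ with $\bar\tau$ a limit ordinal, as realized in some $p\in G$. Condition~(5) then yields $(\bar\a,i^p(\a,\n,\bar\tau),\bar\tau)\in\dom(h^p)$ together with $h^p(\bar\a,i^p(\a,\n,\bar\tau),\bar\tau)=h^p(\a,\n,\tau)\restr\bar\tau$. Setting $\m=i^p(\a,\n,\bar\tau)$, a value that does not depend on the choice of $p\in G$ by clause~(2)(b) of the extension relation, and taking the union over $G$, we obtain $C^G_{\bar\a,\m}=\range(f_{\a,\n}\restr\bar\tau)=C\cap\bar\a$, so $C\cap\bar\a\in\mtcl C^G_{\bar\a}$, as needed. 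For nonemptiness of $\mtcl C^G_\a$: when $\cf(\a)=\o_1$, Lemma~\ref{density0} places $\a$ into $X_p$ for some $p\in G$, so that the thread $C^G_{\a,0}$ exists and, by the previous paragraph, is a club; when $\cf(\a)=\o$, Lemma~\ref{density3} introduces the triple $(\a,0,\o)$, whose thread is then filled out by the obvious analogues of Lemmas~\ref{density1} and~\ref{density2}.

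The step I expect to be the main obstacle is the cofinality of the generic threads, namely verifying that the strictly increasing continuous $f_{\a,0}$ really exhausts $\a$ rather than converging to some $\gamma<\a$. This is precisely where the model-anchoring condition~(7)(c) is indispensable, and it has to be combined with a density argument for adjoining models of prescribed supremum $\ssup(N\cap\a)$, which in turn leans on the amalgamation machinery already developed in the proof of Lemma~\ref{aleph15}. By contrast, order type, cardinality and coherence are direct bookkeeping from the definition of $\mtcl P$ and its extension relation.
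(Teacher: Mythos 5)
Your verification is correct and follows exactly the route the paper intends: the paper states the corollary as immediate from Lemma \ref{aleph15}, the density lemmas \ref{density0}--\ref{density2}, and the definition of $\mtcl P$, and your write-up is precisely the expansion of that citation (totality and closure from the density lemmas, cofinality from clause (7)(c) plus the density of adding a model containing a prescribed bound, coherence from clause (5) and clause (2)(b) of the extension relation). You also correctly identify cofinality of the $\cf(\a)=\o_1$ threads as the one step that genuinely leans on the amalgamation machinery of Lemma \ref{aleph15}.
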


Corollary \ref{cor00} yields the following.

\begin{corollary}\label{cor1} $\MA^{1.5}_{\al_2}(\mbox{stratified})$ implies $\Box_{\o_1, \o_1}$.\end{corollary}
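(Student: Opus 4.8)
The plan is to deduce Corollary \ref{cor1} directly from Corollary \ref{cor00}, replacing the appeal to genericity by an application of the forcing axiom. The key observation is that the verification behind Corollary \ref{cor00} does not really need a fully generic filter: it only uses that the filter meets a fixed family $\mtcl D$ of at most $\al_2$-many dense sets, namely those extracted from the density lemmas \ref{density3}, \ref{density0}, \ref{density1} and \ref{density2}. Since, by Lemma \ref{aleph15}, $\mtcl P$ has the $\al_{1.5}$-c.c.\ with respect to finite stratified families of models, $\mtcl P$ lies in the class $\mtcl K$ to which $\MA^{1.5}_{\al_2}(\mbox{stratified})$ applies, so the axiom furnishes a filter meeting $\mtcl D$, from which a $\Box_{\o_1,\o_1}$-sequence is read off in $V$.

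Concretely, first I would list the members of $\mtcl D$. For each $\a\in\Lim(\o_2)$ of countable cofinality let $D^0_\a=\{p:(\a,0,\o)\in\dom(h^p)\}$ (dense by Lemma \ref{density3}); for each $\a\in S^2_1$ let $D^1_\a=\{p:\a\in X_p\}$ (dense by Lemma \ref{density0}); for each $\a\in S^2_1$ and each $\n<\o_1$ let $D^2_{\a,\n}=\{p:\a\in X_p\text{ and }\n\in\dom(h^p(\a,0,\o_1))\}$ (dense by Lemmas \ref{density0} and \ref{density1}); and for each $\a\in S^2_1$, each nonzero limit $\d<\o_1$ and each $\eta<\o_1$ let $D^3_{\a,\d,\eta}$ be the dense set provided by Lemma \ref{density2}, ensuring that the emerging function is continuous at $\d$. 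One also includes, for each $\a\in S^2_1$ and each $\b<\a$, a dense set forcing some value of $h^p(\a,0,\o_1)$ to exceed $\b$, which guarantees cofinality of the resulting club and whose density is proved exactly as in Lemma \ref{density1}. Counting, $|\mtcl D|\leq\al_2$, so $\MA^{1.5}_{\al_2}(\mbox{stratified})$ applies and yields a filter $G\sub\mtcl P$ meeting every member of $\mtcl D$.

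It then remains to check, with this (non-generic) $G$ in hand, that $(\mtcl C^G_\a:\a\in\Lim(\o_2))$ is a $\Box_{\o_1,\o_1}$-sequence, and this is carried out verbatim as in the proof of Corollary \ref{cor00}. Meeting the sets $D^0_\a$ and $D^1_\a$ makes each $\mtcl C^G_\a$ nonempty; the sets $D^2_{\a,\n}$, the cofinality sets, and the sets $D^3_{\a,\d,\eta}$ make each nonempty $C^G_{\a,\n}$ a genuine club of $\a$; clause (1) of the definition of $\mtcl P$ keeps its order type $\leq\o_1$; and since there are at most $\o_1$ nonempty $C^G_{\a,\n}$ we have $|\mtcl C^G_\a|<\o_2$. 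Finally, coherence is immediate from clause (5) of the definition of $\mtcl P$, exactly as before: a limit point of $C^G_{\a,\n}$ below $\a$ has the form $h^p(\a,\n,\tau)(\bar\tau)$ for a limit $\bar\tau\in\dom(h^p(\a,\n,\tau))$, and clause (5) records $h^p(\a,\n,\tau)\restr\bar\tau$ as a member of $\mtcl C^G_{h^p(\a,\n,\tau)(\bar\tau)}$.

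The work here is almost entirely bookkeeping, and the one point requiring care is precisely the passage from a generic filter to a filter meeting only $\mtcl D$: I would make sure that \emph{every} clause of the definition of a $\Box_{\o_1,\o_1}$-sequence is witnessed by meeting a single, explicitly named dense set, so that $\mtcl D$ really has size $\al_2$ and not more, and in particular I would isolate the cofinality requirement as its own family of dense sets rather than leaving it to genericity. Everything else—properness and the $\al_2$-c.c., hence the very applicability of the axiom—has already been established in Lemma \ref{aleph15} and Proposition \ref{al15-2}.
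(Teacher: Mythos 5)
Your proposal is correct and is essentially the paper's own (implicit) argument: the paper obtains Corollary \ref{cor1} from Corollary \ref{cor00} precisely by applying $\MA^{1.5}_{\al_2}(\mbox{stratified})$ to $\mtcl P$ (legitimate by Lemma \ref{aleph15}) with the $\al_2$-many dense sets extracted from Lemmas \ref{density3}--\ref{density2}. The only bookkeeping item missing from your list is the family of cofinality and domain-filling dense sets for the triples $(\a,0,\o)$ with $\cf(\a)=\o$, not just for $\a\in S^2_1$; these are dense by the same argument and there are still only $\al_2$ of them, so nothing essential changes.
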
 
 
 At this point, the following question suggests itself.

\begin{question} Does $\MA^{1.5}_{\al_2}(\mbox{stratified})$ imply $\Box_{\o_1, \o}$? \end{question}

It is proved in \cite{Neeman} that $\MA^{1.5}_\k$, for any given $\k$, is consistent with $\lnot\Box_{\o_1, \o}$. 

\begin{question}
Does $\MA^{1.5}_{\al_2}$ imply $\Box_{\o_1, \o_1}$? \end{question} 
 
 Finally, the following corollary is an immediate consequence of Corollary \ref{cor00}.
 
  \begin{corollary}
  $\ZFC$ proves that there is a poset $\mtcl P$ such that \
  \begin{enumerate}
  \item $\mtcl P$ is proper,
  \item $\mtcl P$ has the $\al_2$-c.c., and
  \item $\mtcl P$ forces weak square.
  \end{enumerate}
  \end{corollary}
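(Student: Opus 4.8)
The plan is to take the single poset $\mtcl P$ constructed at the beginning of this section as the desired witness, and simply to read off the three required properties from the results already established. Working in an arbitrary model of $\ZFC$, I would first fix, using the axiom of choice, a sequence $\vec e=(e_\a\,:\,\a<\o_2)$ with each $e_\a:|\a|\into\a$ a bijection; such a sequence always exists, and relative to it the poset $\mtcl P$ defined above is a well-defined object. Thus $\ZFC$ proves the existence of at least one such $\mtcl P$, and it remains only to verify clauses (1)--(3) for it.

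For clauses (1) and (2) I would invoke Lemma \ref{aleph15}, which gives that $\mtcl P$ has the $\al_{1.5}$-c.c.\ with respect to finite stratified families of models, followed by Proposition \ref{al15-2}, which states that any forcing with this property is proper and has the $\al_2$-c.c. This yields (1) and (2) verbatim, with no additional argument. For clause (3) I would appeal to Corollary \ref{cor00}: if $G$ is $\mtcl P$-generic over $V$, then $(\mtcl C^G_\a\,:\,\a\in\Lim(\o_2))$ is a $\Box_{\o_1,\o_1}$-sequence in $V[G]$, and $\Box_{\o_1,\o_1}$ is precisely weak square. Hence $\mtcl P$ forces weak square.

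The only point I would pause to remark on is that the cardinals $\o_1$ and $\o_2$ indexing this generic sequence are the correct cardinals of $V[G]$: being proper, $\mtcl P$ preserves $\o_1$, and having the $\al_2$-c.c.\ it preserves all cardinals ${\geq}\,\al_2$ and in particular does not collapse $\o_2$, so $\o_1^{V[G]}=\o_1^V$ and $\o_2^{V[G]}=\o_2^V$ and the added sequence genuinely witnesses $\Box_{\o_1,\o_1}$ in $V[G]$. Since every step is a direct citation of Lemma \ref{aleph15}, Proposition \ref{al15-2}, or Corollary \ref{cor00}, I do not expect any real obstacle; the entire content of the corollary is the observation that one and the same poset $\mtcl P$ simultaneously enjoys the conclusions of all three.
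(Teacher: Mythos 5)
Your proposal is correct and is exactly the argument the paper intends: the corollary is stated as an immediate consequence of Corollary \ref{cor00}, with properness and the $\al_2$-c.c.\ supplied by Lemma \ref{aleph15} together with Proposition \ref{al15-2}. Your added remark that properness and the $\al_2$-c.c.\ guarantee that $\o_1$ and $\o_2$ are preserved, so that the generic sequence really witnesses $\Box_{\o_1,\o_1}$ in the extension, is a sensible (and correct) point of care that the paper leaves implicit.
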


\section{$\MM_{\aleph_2}(\aleph_2\mbox{-c.c.})$ is false}\label{s2}

Given a set $S$ of ordinals and a set $X$, let us denote by $\Unif_{S, X}$ the statement that for every sequence $(f_\a\,:\,\a\in S)$ of colourings $f_\a\sub\a\times X$ such that $\dom(f_\a)$ is a club of $\a$ there is a function $H:\bigcup S\into X$ such that for every $\a\in S$, $$\{\x\in \dom(f_\a)\,:\, f_\a(\x)=H(\x)\}$$ contains a club of $\a$.

Shelah proves the following theorem in (\cite{PIF}, Appendix, Chapter 3).

\begin{theorem}\label{Shelah-thm} (Shelah) $\Unif_{S^2_1, 2}$ is false. 
\end{theorem}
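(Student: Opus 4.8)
The plan is to prove the theorem in its contrapositive ($\lnot$) form: to construct, in $\ZFC$, a single sequence $(f_\a\,:\,\a\in S^2_1)$ of $2$-colourings with $\dom(f_\a)$ a club of $\a$ for which no $H\colon\o_2\into 2$ is a uniformiser, i.e.\ for which, given any $H$, there is some $\a\in S^2_1$ with $\{\xi\in\dom(f_\a)\,:\,f_\a(\xi)\neq H(\xi)\}$ stationary in $\a$. The engine is a club-guessing sequence on $S^2_1$, whose existence is due to Shelah (\cite{PIF}): there is $(C_\a\,:\,\a\in S^2_1)$ with each $C_\a$ a club of $\a$ of order type $\o_1$ such that for every club $E\subseteq\o_2$ the set $\{\a\in S^2_1\,:\,C_\a\subseteq E\}$ is stationary. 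I would fix such a sequence, together with its continuous increasing enumerations, and the coherent parameter $\vec e$ already present in the paper, so that, via Fact~\ref{agreement}, the initial pieces of the ladders sitting inside models of equal height agree canonically.

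The colouring $f_\a$ must not be constant or merely periodic along $C_\a$, since such colourings are trivially uniformised (by a constant $H$): the difficulty is not \emph{within} a single $\a$ but in the \emph{global} compatibility of the demands placed on $H$ by different $\a$. Accordingly I would define $f_\a(\xi)$, for $\xi\in C_\a$, to record modulo $2$ an invariant attached to the position of $\xi$ relative to a fixed coherent sequence on $\o_2$ --- concretely the parity of the length of the minimal walk from $\a$ to $\xi$ (equivalently, a parity read off from $e_\a$) --- so that the coherence (cocycle) identities relating the walks from different top ordinals force the constraints ``$H\restriction C_\a=f_\a$ on a club'' to propagate, and hence to clash, across the levels.

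For the refutation, suppose toward a contradiction that $H\colon\o_2\into 2$ uniformises $(f_\a)$, and for each $\a$ let $E_\a\subseteq C_\a$ be a club of $\a$ with $H\restriction E_\a=f_\a\restriction E_\a$. I would first thin out to an ambient club $E\subseteq\o_2$ on which the coherence of $\vec e$ and the walk colouring behave canonically, and then use club guessing to capture a \emph{single} $\a$ with $C_\a\subseteq E$ while retaining enough lower-level structure. On this $\a$ the agreement $H=f_\a$ on $E_\a$ pins $H$ along a club of $C_\a$ in terms of the coherent parity; comparing this with the parity demanded by the ordinals $\bar\a<\a$ whose walks share a common tail with that of $\a$, one aims to isolate a stationary set of $\xi$ on which $H(\xi)$ would be forced to take two different values, which is absurd. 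This would yield a stationary disagreement for some $\a$, contradicting that $H$ is a uniformiser.

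The main obstacle, and the genuinely Shelah-style heart of the argument, is precisely this last step: manufacturing \emph{stationarily many} honest conflicts using only two colours. Arbitrary ladder systems share too little --- pairwise intersections $C_\a\cap C_{\bar\a}$ are bounded --- so plain club guessing alone produces no conflicts; the work lies in choosing the $C_\a$ and the parity colouring so that, for \emph{every} candidate $H$, the coherent sequence forces inside some guessed $C_\a$ a stationary set of positions at which the parities inherited from lower levels cannot be reconciled with a single value of $H$. I expect this to require the sharper, tail-coherent form of club guessing together with a pressing-down argument on $E_\a$ to localise the conflict; this is what makes the statement a nontrivial $\ZFC$ theorem rather than a consequence of a square or guessing principle. (An alternative, in the descent spirit of Section~\ref{s4}, would be to feed a putative uniformiser back into the construction to produce an infinite descending chain of clubs or ordinals; I would regard the club-guessing route as the more robust one.)
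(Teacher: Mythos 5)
There is a genuine gap here, and you name it yourself: the entire content of the theorem is the step you defer. Your proposal fixes a club-guessing ladder system, proposes a walk-parity colouring, and then says that one ``aims to isolate'' a stationary set of positions where $H$ is overdetermined, adding that this last step is ``the genuinely Shelah-style heart of the argument'' which you ``expect'' to require further tools. Everything before that point is compatible with $\Unif_{S^2_1,2}$ being true; no contradiction has actually been derived. There are also concrete reasons to doubt the route as described. First, the conflicting demands you want to play off against the guessed $\a$ must come from ordinals $\bar\a<\a$ that themselves carry colourings, i.e.\ $\bar\a\in S^2_1$; but if $\ot(C_\a)=\o_1$ then every proper limit point of $C_\a$ has countable cofinality, so the lower levels of your ladder impose no demands at all, and the ``pairwise intersections are bounded'' obstruction you correctly identify is not overcome by guessing a single $C_\a\sub E$. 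Second, it is not shown (and is not clear) that parity of walk length has any anti-uniformization property; coherence of walks gives agreement of codes on finite-difference tails, which is the opposite of what you need. So as it stands the proposal is a research plan, not a proof.

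For comparison: the paper itself does not prove this theorem; it is quoted from Shelah with the citation (\cite{PIF}, Appendix, Chapter 3), and the only ingredient of it the paper actually uses is the constant-colouring case, Theorem \ref{unif}, which it does prove. The proof of Theorem \ref{unif} also shows the direction a correct argument takes, and it is quite different from yours: rather than engineering a single clever $2$-colouring, one \emph{boosts the number of colours}. Two-colour uniformization self-improves to $2^{\al_0}$-colour uniformization by splitting a real-valued colouring into countably many bit-colourings and intersecting the countably many witnessing clubs (legitimate since $\cf(\a)=\o_1$); if $2^{\al_0}\geq\al_2$ this yields $\o_2$-colour uniformization, which the identity colouring $F(\a)=\a$ refutes at any closure point of $H$. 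Hence $\CH$ holds, and one then exploits $\al_1$-many colours (e.g.\ order-type colourings $\x\mapsto\ot(C_\a\cap\x)$ along the ladders, which force $H\restr E_\a$ to be an order isomorphism onto a club of $\o_1$) together with pressing-down to reach the final contradiction. No club guessing and no minimal walks are needed. If you want to complete a proof, I would redirect your effort to that colour-amplification scheme rather than to sharpening the guessing/walks machinery.
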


We can also define a natural weakening $\Unif^{\,\text{c}}_{S, X}$ of $\Unif_{S, X}$ by restricting to sequences $(f_\a\,:\,\a\in S)$ of constant colourings (i.e., for every $\a\in S$, $f_\a$ is a constant function).\footnote{The superscript c  is for `constant'.} It is immediate to see that for any $S\sub \Ord$ and any set $X$, $\Unif^{\,\text{c}}_{S, X}$ can be equivalently stated as the assertion that for every function $F:S\into X$ there is a function $H:\bigcup S\into X$ with the property that for every $\a\in S$ there is a club $C\sub \a$ of $\a$ such that $H(\x)=F(\a)$ for every $\x\in C$. We will say that $H$ \emph{uniformizes $F$ mod.\ clubs}.

%Given a set $X$ and a function $F:S^{\o_2}_{\o_1}\into X$, let us say that a function $H:\o_2\to X$ \emph{uniformizes $F$ mod.\ clubs} in case for every $\a\in S^{\o_2}_{\o_1}$ there is a club $C\sub \a$ of $\a$ such that $H(\x)=F(\a)$ for every $\x\in C$.

%Given a set $X$, let us denote by $\Unif^{\text{const}}_{S^2_1, X}$ the assertion that for every function  $F:S^{\o_2}_{\o_1}\into X$, there is a function $H:\o_2\into X$ uniformizing $F$ mod.\ clubs. 

The following is implicit in (\cite{PIF}, Appendix, Chapter 3).

\begin{theorem}\label{unif} (Shelah) If $S\sub S^2_1$ is stationary and $\Unif^{\,\text{c}}_{S, 2}$ holds, then $\CH$ holds as well. 
\end{theorem}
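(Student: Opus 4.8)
The plan is to argue by contraposition: assuming $\lnot\CH$, i.e.\ $2^{\al_0}\geq\al_2$, I will contradict $\Unif^{\,\text c}_{S,2}$. The strategy has two independent ingredients. First, a \emph{colour-boosting} step showing that two-colour constant uniformization on $S$ automatically yields $\kappa$-colour constant uniformization on $S$ for every $\kappa\leq 2^{\al_0}$; under $\lnot\CH$ this delivers $\Unif^{\,\text c}_{S,\o_2}$. Second, an easy Fodor argument showing that $\Unif^{\,\text c}_{S,\o_2}$ is outright false in $\ZFC$. Together these give the contradiction, hence $\CH$.

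For the colour-boosting step I would fix an injection $j:\kappa\into 2^\o$ (possible exactly because $\kappa\leq 2^{\al_0}$) and, for $F:S\into\kappa$, write $j(F(\a))=\langle b^\a_n\,:\,n<\o\rangle$. For each $n$ I apply $\Unif^{\,\text c}_{S,2}$ to the colouring $\a\mapsto b^\a_n$ to obtain $H_n:\bigcup S\into 2$ such that for each $\a\in S$ the set $\{\x<\a\,:\,H_n(\x)=b^\a_n\}$ contains a club $C^n_\a$ of $\a$. Since $\cf(\a)=\o_1$, clubs of $\a$ are closed under countable intersections, so $C_\a=\bigcap_{n<\o}C^n_\a$ is again a club of $\a$; on $C_\a$ we have $\langle H_n(\x)\,:\,n<\o\rangle=j(F(\a))$, so defining $H(\x)$ to be the unique element of $\kappa$ with $j$-image $\langle H_n(\x)\,:\,n<\o\rangle$ (and arbitrary otherwise) uniformizes $F$ mod.\ clubs. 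This is the key maneuver: the countable ``width'' of reals lets a single family of binary uniformizers be run coordinatewise, and it is precisely here that $2^{\al_0}\geq\al_2$ enters.

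For the $\ZFC$ falsity of $\Unif^{\,\text c}_{S,\o_2}$ I would take any injection $F:S\into\o_2$ (available since a stationary $S\sub\o_2$ has size $\al_2$). If $H$ uniformized $F$, then for each $\a\in S$ I could pick $\x_\a$ in the witnessing club $C\sub\a$, so that $\x_\a<\a$ and $H(\x_\a)=F(\a)$. The map $\a\mapsto\x_\a$ is regressive on the stationary set $S\sub\o_2$, so by Fodor's lemma it is constant, say equal to $\x^*$, on a stationary $S'\sub S$. Then $F(\a)=H(\x^*)$ for every $\a\in S'$, contradicting the injectivity of $F$ (as $|S'|\geq 2$).

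The argument is short, and the only real design choice is to pass through many-coloured uniformization rather than to attack two colours directly. I expect the one point needing genuine care to be the verification that the coordinatewise construction in the boosting step produces a legitimate witness $H$, in particular the use of $\cf(\a)=\o_1$ to guarantee that the countable intersection $C_\a$ remains a club — this is exactly where the hypothesis $S\sub S^2_1$ (rather than, say, $S\sub S^2_0$) is essential.
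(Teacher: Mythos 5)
Your proof is correct and takes essentially the same route as the paper's: boost $\Unif^{\,\text{c}}_{S,2}$ to $\Unif^{\,\text{c}}_{S,\o_2}$ coordinatewise (using $\cf(\a)=\o_1$ to intersect the countably many clubs), then refute $\o_2$-colour constant uniformization. The only difference is in the last step and is cosmetic: the paper takes $F$ to be the identity on $S$ and gets the contradiction at an $\a\in S$ closed under $H$, whereas you take an arbitrary injection and apply Fodor's lemma; both are fine.
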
  

\begin{proof}
$\Unif^{\,\text{c}}_{S, 2}$ clearly implies $\Unif^{\,\text{c}}_{S, \mtbb R}$: Given $F:S\into\, ^\o2$, let $F_n:S\into 2$ be defined by $F_n(\a)=(F(\a))(n)$ (for each $n<\o$). Applying $\Unif^{\,\text{c}}_{S, 2}$ to each $F_n$ we obtain functions $H_n:S\into 2$ and clubs $D^n_\a\sub\a$,  for $\a\in S$ and $n<\o$, such that $H_n(\x)= F_n(\a)$ for all $\x\in D^n_\a$. But then, if we define $H:S\into\, ^\o2$ by letting $H(\x)=(H_n(\x)\,:\,n<\o)$, it follows that $H$ uniformizes $F$ mod.\ clubs as witnessed by the clubs $D_\a$, for $\a\in S$, where $D_\a=\bigcap_n D^n_\a$. 

Thus, if $2^{\al_0}\geq\al_2$ and $\Unif^{\,\text{c}}_{S, 2}$ holds, then $\Unif^{\,\text{c}}_{S, \o_2}$ holds as well. 
Now suppose $\Unif^{\,\text{c}}_{S, 2}$ holds and $2^{\al_0}\geq\al_2$. Letting $F$ be the identity function on $S$, we apply $\Unif^{\,\text{c}}_{S, \o_2}$ to $F$ and get a corresponding uniformizing function $H:\o_2\into \o_2$ and clubs $D_\a\sub \a$ for $\a\in S$. Since $S$ is stationary, we may find $\a\in S$ closed under $H$. But now we reach a contradiction since there is obviously no club $D\sub \a$ such that $H(\x)=F(\a)=\a$ for all $\x\in D$.\footnote{There is obviously not even any nonempty $D\sub\a$ like that.} 
\end{proof}

\begin{remark}\label{rmk2}
Given a club-sequence $\vec C=(C_\a\,:\,\a\in S)$ such that $\ot(C_\a)=\cf(\a)$ for each $\a\in S$, we can define the following strengthening $\Unif^{\,\text{c}, \vec C,\, \text{cbd}}_{S, 2}$ of $\Unif^{\,\text{c}}_{S, 2}$: $\Unif^{\,\text{c}, \vec C,\, \text{cbd}}_{S, 2}$ is the statement that for every function $F:S\into 2$ there is a function $H:\ssup(S)\into 2$ such that for every $\a\in S$, $$\{\x\in C_\a\,:\, H(\x)=F(\a)\}$$ is co-bounded in $\a$.\footnote{We say that $H$ uniformizes $F$ on $\vec C$ modulo co-bounded sets.}

If $\CH$ holds and $\vec C=(C_\a\,:\,\a\in S)$ is as above, $\Unif^{\,\text{c}, \vec C,\, \text{cbd}}_{S^2_1, 2}$ can be forced by a $\s$-closed and $\al_2$-c.c.\ forcing, obtained as the direct limit of a long enough countable support iteration of $\s$-closed forcing notions with the $\al_2$-c.c.
%\footnote{The fact that $\Unif^{\,\text{c},\,\text{cbd}}_{S^2_1, 2}$ is inconsistent is mentioned also in \cite{PIF}, Appendix, Chapter 3.} 
At any given stage of the iteration, the corresponding iterand is the forcing $\mtcl Q_{\vec C, F}$ for adding a uniformizing function on $\vec C$ mod.\ co-bounded sets, for some given colouring $F:S\into 2$:  A condition in $\mtcl Q_{\vec C, F}$ is a function $q=(b^q_\a\,:\,\a\in Z_q)$, for some countable $Z_q\sub S^2_1$, such that $b^q_\a<\a$ for each $\a\in Z_q$, and such that $\ssup(\dom(C_{\a'})\cap\a)<b^q_{\a'}$ for all $\a<\a'$ in $Z_q$ with $F(\a)\neq F(\a')$.
%if $\range(c^q_{\a'})\cap\a\neq\emptyset$, then there is some  $\n<\o_1$ such that $\n$, $\n+1\in \dom(c^q_{\a'})$, $c^q_\a(\n)<\a$, $c^q_{\a'}(\n+1)>\a$, and $\range(c^q_\a)\cap (c^q_{\a'}(\d)+1)=\emptyset$. 
The extension relation is reversed inclusion. 
%for some countable $Z_q\sub S^2_1$, such that each $c^q_\a:\d\into \a$ is a continuous strictly increasing function with domain some successor ordinal $\d<\o_1$, and for all $\a<\a'$ in $Z_q$ such that $F(\a)\neq F(\a')$, if $\range(c^q_{\a'})\cap\a\neq\emptyset$, then there is some  $\n<\o_1$ such that $\n$, $\n+1\in \dom(c^q_{\a'})$, $c^q_\a(\n)<\a$, $c^q_{\a'}(\n+1)>\a$, and $\range(c^q_\a)\cap (c^q_{\a'}(\d)+1)=\emptyset$. The extension relation is the natural one. 
\end{remark}

\begin{question}
Is the dependence on a fixed club-sequence in the consistency proof in Remark \ref{rmk2} necessary? In other words, is the following strengthening of $\Unif^{\,\text{c}, \vec C_\ast,\, \text{cbd}}_{S, 2}$, for a fixed club-sequence $\vec C_*=(C_\a,:\,\a\in S^2_1)$ with $\ot(C_\a)=\o_1$ for each $\a$, consistent? Suppose $\vec C=(C_\a\,:\,\a\in S)$ is a club-sequence such that $\ot(C_\a)=\o_1$ for each $\a\in S$. Then  for every function $F:S\into 2$ there is a function $H:\ssup(S)\into 2$ such that for every $\a\in S$, $$\{\x\in C_\a\,:\, H(\x)=F(\a)\}$$ is co-bounded in $\a$.
\end{question}

\begin{remark}
The statement that $\Unif^{\,\text{c}}_{S, 2}$ holds for every stationary $S\sub S^2_1$ is not equivalent to $\CH$ as, for example, the assumption that $\diamondsuit(S)$ holds for every stationary $S\sub S^2_1$ implies $\lnot\Unif^{\,\text{c}}_{S, 2}$ for every such $S$: Suppose $(A_\a\,:\,\a\in S)$ is a $\diamondsuit$-sequence and let $F:S\into 2$ be such that for every $\a\in S$,  $F(\a)=1-i$ if $A_\a$ codes a function $H_\a:\a\into 2$ and there are club-many $\x\in \a$ such that $H_\a(\x)=i$. It is easy to see that no function $H:\o_2\into 2$ can uniformize $F$ mod.\ clubs.
\end{remark}

%We will use the following easy general fact involving weak forms of $\Box_{\o_1}$.

%\begin{fact}\label{general-fact}
%Let $\k\in\{\al_0, \al_1\}$, $\vec C=(\mtcl C_\a\,:\,\a\in\Lim(\o_2))$ a $\Box_{\o_1, \k}$-sequence, let $\vec e=(e_\a\,:\,\a<\o_2)$ be such that $e_\a:|\a|\into\a$ is a surjection for each $\a<\o_2$, and let $M$, $N\elsub H(\o_3)$ be such that $\vec C$, $\vec e\in M$, $N$, $\k\sub M$, $|N|=\al_0$, $\tau\in S^2_1\cap N$, $\d_N\in M$, and $\b:=\ssup(N\cap\tau)\in M$. Then $N\cap\b\in M$.
%\end{fact}

%\begin{proof}
%Let $\mtcl C_\a=\{C_{\n, \a}\,:\,\n<\k\}$ for each $\a\in\Lim(\o_2)$. By the coherence of $\vec C$ and since $\vec C$, $\tau\in N$, there is some $\n<\k$ such that $C_{0, \tau}\cap\b=C_{\n, \b}$. Since $\vec C$, $\b$, $\n\in M$, it follows that $C_{\n, \b}\in M$. But then $N\cap\b=\bigcup\{e_\a``\d_N\,:\,\a\in C_{0, \tau}\cap\b\}=\bigcup\{e_\a``\d_N\,:\,\a\in C_{\n, \b}\}\in M$ since also $\vec e$, $\d_N\in M$.
%\end{proof}

Given a class $\mtcl K$ of countable models, let us say that a proper forcing $\mtbb P$ is \emph{proper with respect to $\mtcl K$} in case for every cardinal $\theta$ such that $\mtbb P\in H(\theta)$ there is a club $D\sub [H(\t)]^{\al_0}$ such that for every $N\in D\cap\mtcl K$ and every condition $p\in \mtbb P\cap N$ there is an extension $p^*\in\mtbb P$ of $p$ which is $(N, \mtbb P)$-generic.

Given a cardinal $\t$, a set $\mtcl S\sub [H(\t)]^{\al_0}$ is a \emph{projective stationary subset of $H(\t)$} in case for every stationary $S\sub\o_1$ and every club $D$ of $[H(\t)]^{\al_0}$ there is some $N\in \mtcl S\cap D$ such that $\d_N\in S$. The following proposition is standard.

\begin{proposition}\label{proj-stat} 
Let $\mtcl K$ be a class of models such that $\mtcl K\cap [H(\t)]^{\al_0}$ is a projective stationary subset of $[H(\t)]^{\al_0}$ for every cardinal $\t>\o_1$ such that $\mtbb P\in H(\t)$. Let $\mtbb P$ be a forcing notion which is proper with respect to $\mtcl K$. Then $\mtbb P$ preserves stationary subsets of $\o_1$. 
\end{proposition}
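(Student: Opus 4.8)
The plan is to verify the standard criterion for stationary-set preservation. Given a stationary $S\sub\o_1$, a condition $p\in\mtbb P$, and a $\mtbb P$-name $\dot C$ that $p$ forces to be a club of $\o_1$, I would produce an extension $q\leq p$ and an ordinal in $S$ that $q$ forces into $\dot C$; since $p$ and $\dot C$ are arbitrary, this shows $\mtbb P$ forces $S$ to remain stationary. To this end, fix a cardinal $\t>\o_1$ large enough that $\mtbb P$, $\dot C$, $p$, $S\in H(\t)$, and let $D_0\sub [H(\t)]^{\al_0}$ be the club witnessing that $\mtbb P$ is proper with respect to $\mtcl K$.

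Next I would invoke the hypothesis that $\mtcl K\cap [H(\t)]^{\al_0}$ is projective stationary. Let $D$ be the intersection of $D_0$ with the club of all countable $N\elsub H(\t)$ with $\mtbb P$, $\dot C$, $p$, $S\in N$; this is again a club of $[H(\t)]^{\al_0}$. Applying the definition of projective stationarity to the stationary set $S$ and the club $D$, I obtain a countable $N\elsub H(\t)$ with $N\in \mtcl K\cap D$ and, crucially, with $\d_N\in S$. Since $N\in D_0\cap\mtcl K$ and $p\in\mtbb P\cap N$, properness with respect to $\mtcl K$ then yields an $(N, \mtbb P)$-generic condition $q$ extending $p$.

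The remaining, and essentially only nontrivial, step is the standard lemma that an $(N, \mtbb P)$-generic condition forces $\d_N$ into any club named by an element of $N$. Concretely, I would show $q\Vdash \d_N\in\dot C$. Note first that $N\cap\o_1=\d_N$, since $N$ is a countable elementary submodel of $H(\t)$ with $\o_1\in N$. Now, for each $\b<\d_N$ the set of conditions deciding some element of $\dot C$ above $\b$ is dense and lies in $N$ (as $\dot C$, $\b\in N$); since $q$ is $(N, \mtbb P)$-generic, it forces this dense set to be met inside $N$, and elementarity together with $N\cap\o_1=\d_N$ guarantees that the witnessing element of $\dot C$ lies strictly between $\b$ and $\d_N$. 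Hence $q$ forces $\dot C\cap\d_N$ to be cofinal in $\d_N$, and since $q$ also forces $\dot C$ to be closed, it forces $\d_N\in\dot C$. Because $\d_N\in S$, we get $q\Vdash \dot C\cap S\neq\emptyset$, completing the argument.

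I expect the only delicate point to be the precise formulation and application of the genericity lemma in the last paragraph — in particular, correctly arranging the parameters so that projective stationarity delivers a model $N$ of $\mtcl K$ lying in the properness-witnessing club and having its height in $S$. Everything else is routine bookkeeping with names and dense sets.
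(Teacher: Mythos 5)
Your proposal is correct and follows essentially the same route as the paper: use projective stationarity of $\mtcl K\cap [H(\t)]^{\al_0}$ to find $N\in\mtcl K$ in the properness-witnessing club with $\d_N\in S$ and containing the relevant parameters, take an $(N,\mtbb P)$-generic extension of $p$, and apply the standard genericity argument to force $\d_N\in\dot C$. The only difference is that you spell out the final genericity step in more detail than the paper does; the substance is identical.
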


\begin{proof}
Let $\dot C$ be a $\mtbb P$-name for a club of $\o_1^V$, let $S\sub\o_1$ be stationary, and let $p\in\mtbb P$. Let $\t$ be large enough and, using the projective stationarity of $\mtcl K\cap H(\t)$, let $N\prec H(\t)$ be countable and such that $\mtbb P$, $\dot C$, $p\in N$ and $\d_N\in S$. Let $p^*$ be an $(N,\mtbb P)$-generic condition stronger than $p$. Then $p^*$ forces that $\d_N\in S$ is a limit of ordinals in $\dot C$ and therefore, since $\dot C$ is a $\mtbb P$-name for a closed set, that $\d_N\in\dot C$. 
\end{proof}

Given a cardinal $\k$, $\MM_\k(\aleph_2\mbox{-c.c.})$ denotes $\FA_\k(\Gamma)$, where $\Gamma$ is the class of all posets $\mtbb P$ such that 
\begin{itemize}
\item $\mtbb P$ preserves stationary subsets of $\o_1$ and 
\item $\mtbb P$ has the $\al_2$-c.c. 
\end{itemize}

The rest of this section is devoted to proving the following theorem.

\begin{theorem}\label{mainthm} $\MM_{\aleph_2}(\aleph_2\mbox{-c.c.})$ is false. \end{theorem}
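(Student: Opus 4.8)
The plan is to extract from $\MM_{\al_2}(\al_2\mbox{-c.c.})$ two contradictory consequences. Since this axiom extends $\MA^{1.5}_{\al_2}(\mbox{stratified})$, and in particular extends $\MA_{\al_2}$, it implies on the one hand $\Box_{\o_1, \o_1}$ (by Theorem \ref{thm1}) and on the other hand $2^{\al_0}>\al_2$, so that $\CH$ fails. The contradiction will come by using the forcing axiom to produce, in $V$, a function witnessing $\Unif^{\,\text{c}}_{S^2_1, 2}$; since $S^2_1$ is stationary, Theorem \ref{unif} then forces $\CH$ to hold, against what we just said.

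To obtain such a witness I will uniformize, via the axiom, an arbitrary colouring $F:S^2_1\into 2$. First I use the weak square sequence $(\mtcl C_\a\,:\,\a\in\Lim(\o_2))$ delivered by $\Box_{\o_1, \o_1}$ to fix, for each $\a\in S^2_1$, a club $C_\a\in\mtcl C_\a$; as $\cf(\a)=\o_1$, every member of $\mtcl C_\a$ has order type exactly $\o_1$, so $\ot(C_\a)=\o_1$. I do not demand (and, under weak square, cannot demand) that $\vec C=(C_\a\,:\,\a\in S^2_1)$ be a single coherent thread; what I retain from $\Box_{\o_1, \o_1}$ is the bound $|\mtcl C_\a|\leq\al_1$ together with the coherence clause $C_\a\cap\bar\a\in\mtcl C_{\bar\a}$ whenever $\bar\a$ is a limit point of $C_\a$. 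With $\vec C$ fixed I consider the forcing $\mtcl Q_{\vec C, F}$ of Remark \ref{rmk2}, whose conditions are countable threshold assignments $q=(b^q_\a\,:\,\a\in Z_q)$, $Z_q\in[S^2_1]^{\al_0}$, subject to the constraint $\ssup(\dom(C_{\a'})\cap\a)<b^q_{\a'}$ for $\a<\a'$ in $Z_q$ with $F(\a)\neq F(\a')$. Since the union of a descending $\o$-chain of conditions is again a condition, $\mtcl Q_{\vec C, F}$ is $\s$-closed, and hence preserves all stationary subsets of $\o_1$.

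Granting, for the moment, that $\mtcl Q_{\vec C, F}$ has the $\al_2$-c.c., it belongs to the class for which $\MM_{\al_2}(\al_2\mbox{-c.c.})$ is the forcing axiom, and I apply the axiom to the $\al_2$-many dense sets $D_\a=\{q\,:\,\a\in Z_q\}$ ($\a\in S^2_1$), obtaining a filter $G\sub\mtcl Q_{\vec C, F}$ in $V$ meeting every $D_\a$. From $G$ I read off a threshold $b_\a$ for each $\a\in S^2_1$ and define $H:\o_2\into 2$ by letting $H(\x)$ be the common value $F(\a)$ over all $\a$ with $\x\in C_\a$ and $\x\geq b_\a$ (and $H(\x)=0$ if no such $\a$ exists). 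The threshold constraints make this value unambiguous: if $\a<\a'$, $F(\a)\neq F(\a')$, and $\x$ belonged to the relevant tails of both $C_\a$ and $C_{\a'}$, then $\x$ would lie below $b_{\a'}$, a contradiction. For each $\a\in S^2_1$ the set $\{\x\in C_\a\,:\,\x\geq b_\a\}$ is a club of $\a$ on which $H$ is constantly $F(\a)$, so $H$ witnesses $\Unif^{\,\text{c}}_{S^2_1, 2}$, and Theorem \ref{unif} yields $\CH$, the desired contradiction.

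The main obstacle is the bracketed claim that $\mtcl Q_{\vec C, F}$ has the $\al_2$-c.c. Under $\CH$ this is the $\D$-system argument recorded in Remark \ref{rmk2}, but here $\CH$ fails, so a plain $\D$-system argument on the countable sets $Z_q$ is unavailable, as a set of size $\al_1$ can have $2^{\al_0}\geq\al_2$ countable subsets. This is exactly where the weak coherence of $\vec C$ enters. Given $\al_2$-many conditions, I reflect through an elementary submodel $M\elsub H(\chi)$ with $|M|=\al_1$, $\o_1\sub M$ and $M\cap\o_2\in\o_2$, containing $\vec C$, $F$ and the given family of conditions. For a condition $q$ lying outside $M$ and any $\a\in Z_q$, although $C_\a\notin M$, the coherence clause together with $|\mtcl C_{\bar\a}|\leq\al_1$ forces every restriction $C_\a\cap\bar\a$ to a limit point $\bar\a\in M$ to lie in $\mtcl C_{\bar\a}\sub M$; consequently the fragment of $q$ that matters for compatibility with conditions of $M$ ranges over a set of size $\al_1$, and one can locate a condition inside $M$ that amalgamates with $q$. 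Verifying that the amalgamation again satisfies the defining constraint of $\mtcl Q_{\vec C, F}$—using the coherence to control the sets $\dom(C_{\a'})\cap\a$ across the two conditions—is the technical heart of the argument, and it is what makes the $\al_2$-c.c.\ survive the failure of $\CH$.
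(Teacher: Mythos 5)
Your overall strategy coincides with the paper's: derive $\lnot\CH$ from the axiom, get a non-uniformizable $F:S^2_1\into 2$ from Theorem \ref{unif}, get a $\Box_{\o_1,\o_1}$-sequence from Theorem \ref{thm1}, and then use the axiom on a stationary-preserving, $\al_2$-c.c.\ forcing that uniformizes $F$ mod.\ clubs. The fatal problem is your choice of forcing. The poset $\mtcl Q_{\vec C, F}$ of Remark \ref{rmk2} simply does not have the $\al_2$-c.c.\ once $\CH$ fails, and no amount of coherence of $\vec C$ can repair this, because the failure has nothing to do with the clubs: fix a countably infinite set $\{\g_n\,:\,n<\o\}\sub S^2_1$ on which $F$ is constant (one colour class of $F$ is uncountable, so such a set exists). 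Since the defining constraint of $\mtcl Q_{\vec C, F}$ only restricts pairs $\a<\a'$ with $F(\a)\neq F(\a')$, \emph{every} assignment of thresholds $b_{\g_n}<\g_n$ on this domain is a condition. For each $x\in{}^{\o}2$ let $q_x$ have domain $\{\g_n\,:\,n<\o\}$ with $b^{q_x}_{\g_n}=x(n)$. Since extension is reverse inclusion, any two distinct $q_x$, $q_y$ are incompatible (a common extension would fail to be a function), so $\{q_x\,:\,x\in{}^{\o}2\}$ is an antichain of size $2^{\al_0}\geq\al_3$. Thus $\mtcl Q_{\vec C,F}$ lies outside the class governed by $\MM_{\al_2}(\al_2\mbox{-c.c.})$ and the axiom cannot be applied to it. Your reflection sketch also misidentifies where the obstruction lives: the ``fragment of $q$ that matters for compatibility with conditions of $M$'' includes $q\restr(M\cap\o_2)$, a countable partial function whose possible values range over a set of size $\al_1^{\al_0}=2^{\al_0}>\al_1$; coherence of $\vec{\mtcl C}$ controls the traces $C_\a\cap\bar\a$ of the clubs on $M$, but not the countable domains $Z_q$ or the threshold functions, which is exactly the $\Delta$-system/$\CH$ issue you set out to circumvent.

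This is precisely why the paper does not use the countable-condition, $\s$-closed poset of Remark \ref{rmk2} (which that remark explicitly restricts to the $\CH$ context), but instead builds a \emph{finite}-condition forcing $\mtcl Q$ whose working parts are finite sets of intervals $\mtcl I^q_\a$ and whose side conditions are finite families of countable models required to be $\vec{\mtcl C}$-stratified and compatible with $F$. With finite working parts the antichain above cannot be formed, and the $\al_2$-c.c.\ is obtained from a two-model genericity lemma (Lemma \ref{Q-proper} together with Lemma \ref{Q-cc}); since the forcing is no longer $\s$-closed, preservation of stationary subsets of $\o_1$ has to be recovered separately, which the paper does by proving properness with respect to the projective stationary class $\mtcl K^{\vec e}_{\vec{\mtcl C}}$ (Claim \ref{cl-proj-stat}, Corollary \ref{Q-proper-cor}, Proposition \ref{proj-stat}), and closedness of the generic sets $D^G_\a$ is arranged via the interval structure of the conditions (Lemma \ref{dens1}). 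To repair your proof you would need to replace $\mtcl Q_{\vec C,F}$ by a forcing of this kind; the uniformization and well-definedness computations at the end of your argument are fine and would carry over.
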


Let us assume, towards a contradiction, that $\MM_{\aleph_2}(\aleph_2\mbox{-c.c.})$ holds. In particular $\FA_{\al_2}(^{{<}\o}2)$ holds and therefore $\CH$ fails.\footnote{In fact $2^{\al_0}\geq\al_3$.} Let $S=S^2_1$. It follows, by Theorem \ref{unif}, that there is a function $F:S\into 2$ for which there is no function $H:\o_2\into 2$ uniformizing $F$ mod.\ clubs. 

Since $\MA^{1.5}_{\al_2}(\mbox{stratified})$ also holds, we may fix a $\Box_{\o_1, \o_1}$-sequence $\vec{\mtcl C}=(\mtcl C_\a\,:\,\a\in\Lim(\o_2))$ (by Theorem \ref{thm1}). Let also $\vec e=(e_\a\,:\,\a<\o_2)$ be such that $e_\a:|\a|\into\a$ is a bijection for each $\a<\o_2$. 

Let $\mtcl K^{\vec e}_{\vec{\mtcl C}}$ be the class of countable models $N$ such that $N\cap \o_2=\bigcup_{\g\in C}e_\g``\d_N$ for some $C\in\mtcl C_\a$, where $\a=\ssup(N\cap\o_2)$.

The following is quite standard.

\begin{claim}\label{cl-proj-stat} For every cardinal $\t>\o_1$, $\mtcl K^{\vec e}_{\vec{\mtcl C}}\cap H(\t)$ is a projective stationary subset of $[H(\t)]^{\al_0}$.
\end{claim}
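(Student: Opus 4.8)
The plan is to verify projective stationarity directly by a standard elementary-chain construction, using the coherence of $\vec{\mtcl C}$ at a single ordinal of cofinality $\o_1$ to feed a club into the relevant model. Fix a cardinal $\t>\o_1$, a stationary $S\sub\o_1$, and a club $D\sub[H(\t)]^{\al_0}$; shrinking $D$, I may assume every $N\in D$ is an elementary submodel of $H(\t)$ closed under $\vec e$ with $\vec e,\vec{\mtcl C}\in N$. Fix a large $\chi$ and build a continuous $\in$-chain $(M_\n\,:\,\n<\o_1)$ of countable elementary submodels of $H(\chi)$ with $H(\t),D,S,\vec e,\vec{\mtcl C}\in M_0$ and $(M_\m\,:\,\m\leq\n)\in M_{\n+1}$ for all $\n$. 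Put $\a_\n=\ssup(M_\n\cap\o_2)$; this is a continuous strictly increasing sequence, so $\a^*=\ssup_{\n<\o_1}\a_\n$ has cofinality $\o_1$, i.e.\ $\a^*\in S^2_1$, and moreover $\bigcup_{\n<\o_1}(M_\n\cap\o_2)=\a^*$ (since $\o_1\sub\bigcup_\n M_\n$ and each $e_\g$, $\g<\a^*$, is onto $\g$). Finally fix $C^*\in\mtcl C_{\a^*}$, so that $\ot(C^*)=\o_1$.

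Next I isolate four sets of indices, each of which I claim contains a club of $\o_1$: (a) $\{\n\,:\,\d_{M_\n}=\n\}$; (b) $\{\n\,:\,M_\n\cap H(\t)\in D\}$; (c) $\{\n\,:\,\a_\n$ is a limit point of $C^*\}$, which is the preimage of the club $\lim(C^*)$ under the continuous increasing map $\n\mapsto\a_\n$; and, crucially, (d) $\{\n\,:\,C^*\cap\a_\n\sub M_\n\}$. For (d), since $\bigcup_\n(M_\n\cap\o_2)=\a^*$ I may define $\r(\g)$, for $\g<\a^*$, to be the least $\n$ with $\g\in M_\n$, and then $h(\m)=\ssup\{\r(\g)\,:\,\g\in C^*\cap\a_\m\}<\o_1$ (each $C^*\cap\a_\m$ being countable, as $\ot(C^*)=\o_1$ and $\a_\m<\a^*$). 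Any limit $\n$ closed under $h$ then lies in (d): if $\g\in C^*\cap\a_\n$, continuity gives $\g<\a_\m$ for some $\m<\n$, whence $\r(\g)\leq h(\m)<\n$ and $\g\in M_{\r(\g)}\sub M_\n$. As the set of such $\n$ is a club, (d) contains a club.

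Since $S$ is stationary, I can then choose $\n\in S$ lying in all four clubs and set $N=M_\n\cap H(\t)$. By (a) and (b) we have $\d_N=\n\in S$ and $N\in D$, and $N\prec H(\t)$ is closed under $\vec e$; let $\a=\ssup(N\cap\o_2)=\a_\n$. By (c), $\a_\n$ is a limit point of $C^*$, so the coherence of $\vec{\mtcl C}$ yields $C:=C^*\cap\a_\n\in\mtcl C_{\a_\n}$, and $C$ is cofinal in $\a_\n$. By (d), $C\sub M_\n=N$, so $C\sub N\cap\o_2$. Since $N$ is closed under $\vec e$, for every $\g\in N\cap\o_2$ one has $e_\g``\d_N=N\cap\g$ (exactly as in the proof of Fact \ref{agreement}); hence $\bigcup_{\g\in C}e_\g``\d_N=\bigcup_{\g\in C}(N\cap\g)=N\cap\a_\n=N\cap\o_2$, the middle equality holding because $C$ is cofinal in $\a_\n=\ssup(N\cap\o_2)$. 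Thus $N\in\mtcl K^{\vec e}_{\vec{\mtcl C}}$, witnessing projective stationarity. The one genuinely non-formal point is clause (d): $C^*$ is selected only after the entire chain has been built, so its points below $\a_\n$ need not \emph{a priori} belong to $M_\n$; the closing-off argument under $h$ is exactly what reconciles the externally chosen $\Box_{\o_1,\o_1}$-club with the internal structure of the models.
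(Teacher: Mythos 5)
Your argument is correct, but it takes the route the paper only sketches in a footnote rather than the one carried out in the main text. The paper's proof chooses $\a\in S^2_0$ above $\o_1$ closed under a finitary function coding $D$; since $\cf(\a)=\o$, any $C\in\mtcl C_\a$ has countable order type and can simply be arranged to lie inside the relevant model $M\prec(H(\o_2);\in,\vec e,C)$, so that $M\cap\a=\bigcup_{\g\in C}e_\g``\d_M$ is immediate and the coherence of $\vec{\mtcl C}$ is never invoked. By instead working at the top of an $\o_1$-chain you are forced to pick $C^*\in\mtcl C_{\a^*}$ of order type $\o_1$ \emph{after} the chain has been built, and your clubs (c) and (d) --- the preimage of the set of limit points of $C^*$, and the closing-off under $h$ --- are exactly the price of reconciling that external choice with the models; coherence is then genuinely needed to pass from $C^*$ to $C^*\cap\a_\n\in\mtcl C_{\a_\n}$. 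Both arguments are sound: the paper's is shorter precisely because the countable-cofinality choice sidesteps (c) and (d) altogether, while yours is self-contained and makes visible where coherence of the weak square sequence enters. Two cosmetic remarks. First, the preliminary shrinking of $D$ so that $\vec e,\vec{\mtcl C}\in N$ for every $N\in D$ cannot be carried out for small $\t$ (e.g.\ $\t=\o_2$, where $\vec e\notin H(\t)$), but it is also never used: closure of $M_\n\cap H(\t)$ under $\vec e$ and its elementarity in $H(\t)$ already follow from $\vec e, H(\t)\in M_\n\prec H(\chi)$. Second, the identity $e_\g``\d_N=N\cap\g$ for $\g\in N\cap\o_2$ requires $e_\g\in N$, which does hold here since $e_\g\in M_\n\cap H(\o_2)\sub N$, exactly as in the proof of Fact \ref{agreement}.
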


\begin{proof} Suppose $D$ is a club of $[H(\t)]^{\al_0}$ and $S\sub\o_1$ is stationary. Let $f:[\o_2]^{{<}\o}\into \o_2$ be a finitary function such that for every $X\in [\o_2]^{\al_0}$, if $f``[X]^{{<}\o}\sub X$, then $X=N\cap \o_2$ for some $N\in D$. Let $\a\in S^2_0$ be such that $\o_1<\a$ and $f``[\a]^{{<}\o}\sub\a$ and let $C\in\mtcl C_\a$. But now, since $E=\{M\cap\a\,:\, M\prec (H(\o_2); \in, \vec e, C)\}$ contains a club of $[\a]^{\al_0}$, we may pick some $X\in E$ closed under $f$ and such that $\d_X\in S$, and if $N\in D$ is such that $N\cap\o_2=X$, then $N$ will be a member of $\mtcl K^{\vec e}_{\vec{\mtcl C}}$ such that $\d_N\in S$.\footnote{The choice of $\a$ being of countable cofinality is inessential. We could have taken $\a$ of cofinality $\o_1$, considered  $E=\{M\cap\a\,:\, M\prec (H(\o_2); \in, \vec e, \vec{\mtcl C})\}$, and continued the argument using the coherence of $\vec{\mtcl C}$.}
\end{proof}

We will show that there is a forcing notion $\mtcl Q$ which is proper with respect to $\mtcl K^{\vec e}_{\vec{\mtcl C}}$, has the $\al_2$-c.c., and forces the existence of a function $H:\o_2\into 2$ uniformizing $F$ mod.\ clubs. This will yield a contradiction since then $\mtcl Q$ will preserve stationary subsets of $\o_1$ by Proposition \ref{proj-stat} and Claim \ref{cl-proj-stat}, and so the existence of such a function $H$ will follow from an application of $\FA_{\al_2}(\{\mtcl Q\})$.

%\begin{remark}
%One can prove directly that $\Box_{\o_1, \o_1}$, which we know follows $\MM_{\al_2}(\aleph_2\mbox{-c.c.})$, implies that for every sequence $\vec f=(f_\a\,:\,\a\in S^2_1)$ of colourings as in the definition of $\Unif_{S^2_1, 2}$ there is a proper forcing notion $\mtcl Q_{\vec f}$ which is proper with respect to $\mtcl K^*$, has the $\al_2$-c.c., and forces the existence of a function $H:\o_2\into 2$ such that $\{\x\in\dom(f_\a)\,:\, f_\a(\x)=H(\x)\}$ contain a club for every $\a\in S^2_1$. It then follows that $\MM_{\al_2}(\aleph_2\mbox{-c.c.})$ is inconsistent since it implies $\Unif_{S^2_1, 2}$, which always fails by Theorem \ref{Shelah-thm}. However, proceeding as we are doing here has the advantage of producing a self-contained proof, not relying on Theorem \ref{Shelah-thm}.
%\end{remark}

For each $\a\in\Lim(\o_2)$, let us fix an enumeration $(C_{\a, \n}\,:\,\n<\o_1)$ of $\mtcl C_\a$. If $\cf(\a)=\o_1$, we may of course take $(C_{\a, \n}\,:\,\n<\o_1)$ to be constant. Also, given a set $X$, we will write $\cl(X)$ to denote $X\cup\overline{X\cap\Ord}$, where $\overline{X\cap\Ord}$ denotes the closure of $X\cap\Ord$ in the order topology.\footnote{We stress that $X$ need not be a set of ordinals.} 

Let us say that a family $\mtcl N$ of countable models is \emph{$\vec{\mtcl C}$-stratified} in case the following holds.

\begin{enumerate}
\item $\mtcl N\sub\mtcl K^{\vec e}_{\vec{\mtcl C}}$
\item For all $N_0$, $N_1\in\mtcl N$, if $\d_{N_0}=\d_{N_1}$ but $N_0\cap\o_2\neq N_1\cap \o_2$, then 
\begin{enumerate}
\item $\a_i:=\min((N_i\cap \o_2)\setminus N_{1-i})$ exists for each $i\in 2$, 
\item $\cf(\a_0)=\cf(\a_1)=\o_1$, and 
\item there is no ordinal  $\a$ above $\ssup(N_0\cap N_1\cap\o_2)$ such that $\a\in\cl(N_0\cap\o_2)\cap\cl(N_1\cap\o_2)$. 
\end{enumerate}
\item For all $N_0$, $N_1\in\mtcl N$, if $\d_{N_0}<\d_{N_1}$, then $$\a:=\max(\cl(N_0\cap\o_2)\cap \cl(N_1\cap\o_2))$$ exists, $\a\in N_1$, and there is some $\n<\d_{N_1}$ such that $$N_0\cap\a=\bigcup_{\g\in C_{\a, \n}}e_\g``\d_{N_0}.$$ 
 \end{enumerate}

%We note that every $\vec{\mtcl C}$-stratified family of models is stratified.

The following simple remark will be quite useful.

\begin{remark}\label{r0}
Suppose $\mtcl N$ is a $\vec{\mtcl C}$-stratified family of models, $\bar\a<\o_2$, $N_0$, $N_1\in\mtcl N$, and $\a_0\in N_0\cap S$ and $\a_1\in N_1\cap S$ are such that $$\ssup(N_0\cap\a_0)=\ssup(N_1\cap\a_1)=\bar\a$$ Then $\d_{N_0}=\d_{N_1}$. Hence, if $\a_0\neq\a_1$, then $\a_0=\min((N_0\cap\o_2)\setminus N_1)$ and  $\a_1=\min((N_1\cap\o_2)\setminus N_0)$.
\end{remark}

Let us also say that a $\vec{\mtcl C}$-stratified family $\mtcl N$ of models is \emph{compatible with $F$} in case for all $N_0$, $N_1\in\mtcl N$, if $\d_{N_0}=\d_{N_1}$, $N_0\cap \o_2\neq N_1\cap\o_2$, and $\a_i=\min((N_i\cap \o_2)\setminus N_{1-i})$ for each $i\in 2$, then $F(\a_0)=F(\a_1)$.

We define $\mtcl Q$ to be the forcing notion consisting of ordered pairs $$q=((\mtcl I^q_\a\,:\, \a\in X_q), \mtcl N_q)$$ with the following properties.

\begin{enumerate}
\item $X_q\in [S]^{{<}\o}$
\item For every $\a\in X_q$, $\mtcl I^q_\a$ is a finite collection of pairwise disjoint intervals of the form $[\gamma_0,\,\gamma_1)$ with $\g_0<\g_1<\a$.
\item For all $\a_0$, $\a_1\in X_q$, if $F(\a_0)\neq F(\a_1)$, then $\min(I)\neq \min(I')$ for all $I\in\mtcl I^q_{\a_0}$ and $I'\in \mtcl I^q_{\a_1}$.
\item $\mtcl N_q$ is a finite family of countable elementary submodels of the structure $(H(\o_2); \in, \vec e, (C_{\a, \n}\,:\, \a\in\Lim(\o_2),\, \n<\o_1))$ which is $\vec{\mtcl C}$-stratified and compatible with $F$.
\item The following are equivalent for every $\a\in X_q$ and every $\b<\a$.
\begin{enumerate}
\item $\b=\min(I)$ for some $I\in\mtcl I^q_\a$.
\item $\b=\ssup(N\cap\a)$ for some $N\in\mtcl N_q$ such that $\a\in N$.
\end{enumerate}
\end{enumerate}

Given conditions $q_0$, $q_1\in\mtcl Q$, $q_1$ extends $q_0$ iff 
\begin{enumerate}
\item $X_{q_0}\sub X_{q_1}$,
\item for every $\a\in X_{q_0}$ and every $I\in\mtcl I^{q_0}_\a$ there is some (necessarily unique) $I'\in\mtcl I^{q_1}_\a$ such that $\min(I')=\min(I)$ and $\ssup(I')\geq\ssup(I)$, and
\item $\mtcl N_{q_0}\sub\mtcl N_{q_1}$
\end{enumerate}

We will use the two following density lemmas.

\begin{lemma}\label{dens0}
For every $\mtcl Q$-condition $q$ and every $\a\in S$ there is some $q^*\in\mtcl Q$ extending $q$ and such that $\a\in X_{q^*}$. 
\end{lemma}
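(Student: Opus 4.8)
The plan is to leave the model part of the condition untouched and to read off the new intervals from the models already present. I may assume $\a\notin X_q$, since otherwise $q$ itself works. I set $\mtcl N_{q^*}=\mtcl N_q$ and $X_{q^*}=X_q\cup\{\a\}$, and keep $\mtcl I^{q^*}_\b=\mtcl I^q_\b$ for every $\b\in X_q$. Clause~(5) in the definition of $\mtcl Q$ then forces the choice of left endpoints for the new coordinate $\a$: they must be exactly the finitely many ordinals in
$$B_\a:=\{\ssup(N\cap\a)\,:\,N\in\mtcl N_q,\,\a\in N\}.$$

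To define $\mtcl I^{q^*}_\a$, let $\b_0<\dots<\b_{k-1}$ enumerate $B_\a$ and put $\mtcl I^{q^*}_\a=\{[\b_j,\,\b_j+1)\,:\,j<k\}$. Since $\a\in S$ has cofinality $\o_1$ it is a limit ordinal, so $\b_j+1<\a$ for each $j<k$, and these singletons are pairwise disjoint; thus clause~(2) holds (note that if no model in $\mtcl N_q$ contains $\a$ then $B_\a=\emptyset$ and $\mtcl I^{q^*}_\a=\emptyset$, which is permitted). Clauses~(1) and~(4) are immediate, as is clause~(5): by construction the left endpoints of $\mtcl I^{q^*}_\a$ are precisely the members of $B_\a$, while for $\b\in X_q$ clause~(5) is inherited from $q$ since $\mtcl N_{q^*}=\mtcl N_q$ and $\mtcl I^{q^*}_\b=\mtcl I^q_\b$. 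Finally $q^*$ extends $q$, taking $I'=I$ in clause~(2) of the extension relation.

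The one clause requiring a genuine argument is~(3), and this is where Remark~\ref{r0} together with the compatibility of $\mtcl N_q$ with $F$ does the work; I regard it as the main (if modest) obstacle. Suppose, towards a contradiction, that $F(\a)\neq F(\b)$ for some $\b\in X_q$ yet some $I\in\mtcl I^{q^*}_\a$ and $I'\in\mtcl I^{q^*}_\b$ share a left endpoint $\bar\a:=\min(I)=\min(I')$. By clause~(5) there are $N$, $N'\in\mtcl N_q$ with $\a\in N$, $\b\in N'$, and $\ssup(N\cap\a)=\ssup(N'\cap\b)=\bar\a$. As $F(\a)\neq F(\b)$ forces $\a\neq\b$, Remark~\ref{r0} yields $\d_N=\d_{N'}$ together with $\a=\min((N\cap\o_2)\setminus N')$ and $\b=\min((N'\cap\o_2)\setminus N)$; in particular $N\cap\o_2\neq N'\cap\o_2$, since $\a\in(N\cap\o_2)\setminus N'$. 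But then compatibility of $\mtcl N_q$ with $F$ gives $F(\a)=F(\b)$, a contradiction. This confirms clause~(3) and completes the verification that $q^*\in\mtcl Q$ is an extension of $q$ with $\a\in X_{q^*}$.
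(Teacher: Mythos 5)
Your construction is exactly the paper's: you add the singleton intervals $\{\ssup(N\cap\a)\}$ for the models $N\in\mtcl N_q$ containing $\a$, leave everything else unchanged, and verify clause (3) by combining Remark \ref{r0} with the $F$-compatibility of $\mtcl N_q$, which is precisely the argument given there. The proof is correct and matches the paper's in all essentials.
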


\begin{proof}
We may obviously assume that $\a\notin X_q$. Let $$\mtcl I=\{\{\ssup(N\cap\a)\}\,:\, N\in\mtcl N_q,\,\a\in N\}$$ Then $$q^*:=((\mtcl I^q_\b\,:\,\b\in X_q)\cup\{(\a, \mtcl I)\}, \mtcl N_q)$$ is a condition in $\mtcl Q$ as desired. To see this, let $\bar\a=\ssup(N\cap\a)$ for some $N\in\mtcl N_q$ with $\a\in N$ and suppose, towards a contradiction, that there is some $\a'\in X_q$ and some $N'\in\mtcl N_q$ such that $\a'\in N'$, $\ssup(N\cap\a')=\bar\a$, and $F(\a')\neq F(\a)$.\footnote{It is not difficult to check that this is the only way $q^*$ could fail to be a $\mtcl Q$-condition.} By $\vec{\mtcl C}$-stratification of $\mtcl N_q$ and Remark \ref{r0} we have that $\d_N=\d_{N'}$, $\a=\min((N\cap\o_2)\setminus N')$, and  $\a'=\min((N'\cap\o_2)\setminus N)$. But then $F(\a)=F(\a')$ since $\mtcl N_q$ is compatible with $F$, which is a contradiction. 
%Let $X$ be the union of $\{\a\}$ and the set of ordinals of the form $\a'=\min(N'\cap\ssup(N\cap \a))$, for some $N\in\mtcl N_q$ with $\a\in N$ and some $N'\in\mtcl N_q$, such that $\cf(\a')=\o_1$. Note that, by strong suitability of $\mtcl N_q$, if $\a$ and $\a'$ are as in the definition of $X$ as witnessed by, respectively, $N$ and $N'$, then $\d_N=\d_{N'}$. We aim to define an extension $q'\in\mtcl Q$ of $q$ by setting $$q'=((\mtcl I^{q'}_\b\,:\,\b\in X_q\cup X), \mtcl N_q),$$ where 
%\begin{enumerate}
%\item $\mtcl I^{q'}_\b=\mtcl I^q_\b$ for every $\b\in X_q$ and
%\item $\mtcl I^{q'}_\b=\{\{\ssup(N\cap\b)\}\,:\,N\in \mtcl N_q,\,\b\in N\}$ for every $\b\in X$. 
%\end{enumerate}  
%Using the compatibility of $\mtcl N_q$ with $F$ it is easy to see that $q'$ is in fact a condition in $\mtcl Q$. 
%% order to prove that $q'$ is a condition in $\mtcl Q$ it is enough to show that if $N\in\mtcl N_q$ and there is some $\b\in X_q$  such that $\a\in N$ and $\ssup(N\cap\a)=\min(I)$ for some $I\in\mtcl I^q_\b$, then $F(\a)=F(\b)$. 
\end{proof}

\begin{lemma}\label{dens2} For all $q\in\mtcl Q$, $\a\in X_q$, and $\eta<\a$ there is some extension $q^*\in\mtcl Q$ together with some $I\in\mtcl I^{q^*}_\a$ such that $\min(I)>\eta$. \end{lemma}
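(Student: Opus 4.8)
The plan is to reduce the statement, via clause (5) in the definition of $\mtcl Q$, to a single model-adding problem. Since clause (5) ties the minima of the intervals in $\mtcl I^{q^*}_\a$ to the values $\ssup(N\cap\a)$ for $N\in\mtcl N_{q^*}$ with $\a\in N$, and we only need one interval in $\mtcl I^{q^*}_\a$ with minimum above $\eta$, it suffices to add to $\mtcl N_q$ a single model $N\in\mtcl K^{\vec e}_{\vec{\mtcl C}}$ with $\a\in N$ and $\ssup(N\cap\a)>\eta$, and then enlarge the interval families accordingly. Concretely, I would set $\mtcl N_{q^*}=\mtcl N_q\cup\{N\}$, keep $X_{q^*}=X_q$ and all old intervals untouched, and for each $\a'\in X_q$ with $\a'\in N$ add to $\mtcl I^q_{\a'}$ the interval $[\ssup(N\cap\a'),\,\ssup(N\cap\a')+1)$. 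The interval attached to the given $\a$ then witnesses the conclusion.

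To find $N$, I would take it to be a countable elementary submodel of $(H(\o_2);\in,\vec e,(C_{\a,\n}))$ which lies in $\mtcl K^{\vec e}_{\vec{\mtcl C}}$ and contains, as elements, $\a$, $\eta$, and every $M\in\mtcl N_q$. Such an $N$ exists because $\mtcl K^{\vec e}_{\vec{\mtcl C}}\cap H(\o_2)$ is projective stationary by Claim \ref{cl-proj-stat}, hence meets the club of elementary submodels of that structure containing the finitely many prescribed parameters. Two observations make this choice work without further effort. First, $\ssup(N\cap\a)>\eta$ is automatic: $\eta\in N$ and $\a$ is a limit, so $\eta+1\in N\cap\a$. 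Second, $\d_N>\d_M$ for every $M\in\mtcl N_q$, since $M\in N$ gives $\d_M=M\cap\o_1\in N\cap\o_1=\d_N$; thus $N$ is strictly taller than every model already in $\mtcl N_q$, and moreover $M\subseteq N$ and $\ssup(M\cap\o_2)\in N$ for each such $M$.

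It remains to verify that $q^*$ is a condition extending $q$, which is where the real work lies; extension itself is immediate. For the stratification and $F$-compatibility required in clause (4), the only new pairs involve $N$, and since $\d_N$ is strictly largest there are no new equal-height pairs, so clause (2) of $\vec{\mtcl C}$-stratification and $F$-compatibility are inherited from $q$. For clause (3) of $\vec{\mtcl C}$-stratification applied to a pair $M\in\mtcl N_q$, $N$, I would use $M\in N$ to identify $\max(\cl(M\cap\o_2)\cap\cl(N\cap\o_2))$ with $\ssup(M\cap\o_2)\in N$, and then read off the required index $\n<\d_N$ from the fact that $N$ can compute from $M$ the least $\n$ with $M\cap\o_2=\bigcup_{\g\in C_{\ssup(M\cap\o_2),\n}}e_\g``\d_M$, whence $\n\in N\cap\o_1=\d_N$. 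The main obstacle is clause (3) in the definition of $\mtcl Q$: no two intervals assigned to ordinals of different $F$-colour may share a minimum. I would treat the three cases directly. Two new minima $\ssup(N\cap\a_0)=\ssup(N\cap\a_1)$ with $\a_0<\a_1$ both in $N$ are impossible, since $\a_0\in N\cap\a_1$ forces $\ssup(N\cap\a_1)\geq\a_0>\ssup(N\cap\a_0)$. A new minimum $\ssup(N\cap\a_0)$ cannot equal an old one $\ssup(M\cap\a_1)$, $M\in\mtcl N_q$, because the latter lies in $N$ (as $M\in N$) whereas $\ssup(N\cap\a_0)\notin N$: if it were some $\rho\in N$, then $\rho<\a_0\in N$ would give $\rho+1\in N\cap\a_0$, contradicting $\rho=\sup(N\cap\a_0)$. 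Old–old clashes are excluded because $q$ is a condition. This same principle—old data lies in $N$ while $\ssup(N\cap\a')\notin N$—also shows each new interval sits strictly above every old interval of $\mtcl I^q_{\a'}$, yielding the pairwise disjointness of clause (2); clause (5) holds by construction, and Remark \ref{r0} together with Fact \ref{agreement} handles the remaining bookkeeping.
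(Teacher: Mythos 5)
Your construction is the one the paper uses: add a single countable elementary submodel $N\in\mtcl K^{\vec e}_{\vec{\mtcl C}}$ that is strictly taller than everything in $\mtcl N_q$, and append the singleton interval $\{\ssup(N\cap\b)\}$ to $\mtcl I^q_\b$ for the relevant $\b\in X_q$; your verifications of clauses (3), (4) and (5) and of $\vec{\mtcl C}$-stratification are correct and considerably more detailed than what the paper records. There is, however, one concrete slip in the choice of $N$: you only require $N$ to contain $\a$, $\eta$ and the \emph{models} of $\mtcl N_q$, not the condition $q$ itself, and this is not enough for the pairwise disjointness required by clause (2). The left endpoints of the old intervals are of the form $\ssup(M\cap\b)$ for $M\in\mtcl N_q$ with $\b\in M$, so they do lie in $N$; but the right endpoints are arbitrary ordinals below $\b$ --- the extension relation allows intervals to be stretched, as in the proof of Lemma \ref{dens1} --- and nothing in your setup forces them into $N$. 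If $q$ happens to have an interval $I_0=[\g_0,\g_1)\in\mtcl I^q_\b$ with $\g_1>\ssup(N\cap\b)$, then your new singleton $\{\ssup(N\cap\b)\}$ is contained in $I_0$ and $q^*$ violates clause (2). Your slogan that ``old data lies in $N$'' is justified only for data computable from the models of $\mtcl N_q$, which the upper endpoints are not.

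The repair is immediate and is exactly what the paper does: require $q\in N$. Then each $[\g_0,\g_1)\in\mtcl I^q_\b$ is an element of $N$, so $\g_1\in N\cap\b$ and hence $\g_1<\ssup(N\cap\b)$, which places every new singleton strictly above every old interval; moreover $X_q\sub N$, so your case distinction on whether $\a'\in N$ becomes vacuous. With that single change the rest of your argument goes through as written.
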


\begin{proof}
Let $N$ be a sufficiently correct elementary submodel of $H(\o_2)$ containing $q$ and $\eta$. We build $q^*$ as $$q^*=((\mtcl I^{q^*}_\b\,:\,\b\in X_q),  \mtcl N_q\cup\{N\}),$$ where $$\mtcl I^{q^*}_\b=\mtcl I^{q}_\b\cup\{\{\ssup(N\cap\b)\}\}$$ for each $\b\in X_q$. 
%is as follows.
%\begin{enumerate}
%\item $\mtcl I^{q'}_\b=\mtcl I^q_\b$ if $\b\neq\a$.
%\item $\mtcl I^{q'}_\a=\mtcl I^q_\a\cup \{\{\eta^*\}\}$, where $\eta^*\in\a\setminus\{\min(I)\,:\,I\in\mtcl X^q_\b,\,\b\in X_q\}$ is above $\eta$ and belongs to $N$ for any (equivalently, by Fact \ref{agreement}, for all) $N\in\mtcl N_q$ of maximum height such that $\a\in N$ (if there is any such $N$).
%\end{enumerate}    
It is clear that $q^*$ is  condition in $\mtcl Q$ stronger than $q$. Also, $\eta<\ssup(N\cap\a)$ since $\eta\in N$.
\end{proof}

The main properness lemma is now the following.

\begin{lemma}\label{Q-proper} Let $\t$ be a cardinal such that $\mtcl Q\in H(\t)$ and let $M^0$ and $M^1$ be countable elementary submodels of $H(\t)$ of the same height such that $F$, $\vec C$, $\vec e\in M^0\cap M^1$ and $\{M^0, M^1\}$ is a $\vec{\mtcl C}$-stratified family\footnote{I.e., conditions (1) and (2) in the definition of $\vec{\mtcl C}$-stratified family hold for $M_0$ and $M_1$.} compatible with $F$. Then for every $q_0\in\mtcl Q\cap M^0$ there is an extension $q^*\in \mtcl Q$ of $q_0$ such that $q^*$ is $(M^i, \mtcl Q)$-generic for $i=0$, $1$. 
\end{lemma}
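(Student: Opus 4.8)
The plan is to treat Lemma~\ref{Q-proper} as a two-model genericity argument in the spirit of the amalgamation carried out in Lemma~\ref{aleph15}. Write $\d=\d_{M^0}=\d_{M^1}$ and, for $i\in 2$, put $N^i=M^i\cap H(\o_2)$. The generic condition itself I would take to be $q^*\le q_0$ with $X_{q^*}=X_{q_0}$, $\mtcl N_{q^*}=\mtcl N_{q_0}\cup\{N^0, N^1\}$, and
$$\mtcl I^{q^*}_\a=\mtcl I^{q_0}_\a\cup\{\{\ssup(N^i\cap\a)\}\,:\,i\in 2,\ \a\in N^i\}$$
for each $\a\in X_{q_0}$, the new singletons being forced by clause~(5). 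Verifying that $q^*$ is a condition is the first task. Since $q_0\in M^0$, every $N\in\mtcl N_{q_0}$ lies in $M^0$ and so has height $<\d$, while $\{N^0, N^1\}$ is $\vec{\mtcl C}$-stratified and compatible with $F$ by hypothesis; what remains are the cross-pairs $(N, N^i)$ with $N\in\mtcl N_{q_0}$, which must be checked against clause~(3). For $i=0$ this is immediate, since $N\cap\o_2\sub N^0\cap\o_2$ and membership of $N$ in $\mtcl K^{\vec e}_{\vec{\mtcl C}}$ pins down the witnessing $C_{\a,\n}$ inside $M^0$; for $i=1$ one locates the agreement point of $N$ and $N^1$ below the divergence point of $N^0$ and $N^1$, using Corollary~\ref{agreement-0} and clause~(2c). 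Finally, a clash among the new singletons --- equal minima $\ssup(N^i\cap\a_0)=\ssup(N^j\cap\a_1)$ with $F(\a_0)\ne F(\a_1)$ --- is excluded exactly as in Lemma~\ref{dens0}, via Remark~\ref{r0} and compatibility with $F$.

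Next I would show, for each $i\in 2$, that $q^*$ is $(M^i, \mtcl Q)$-generic. Fix a dense open $D\in M^i$ and $p\le q^*$; as $N^i\in\mtcl N_{q^*}\sub\mtcl N_p$, it is enough to produce $r\in D\cap M^i$ compatible with $p$. Following Lemma~\ref{aleph15}, I would first choose, inside $M^i$, bounds $\eta$ (and $\eta_\a$ for the relevant $\a\in X_p\setminus N^i$) separating the ordinals of $p$ above $M^i$ from the trace of $M^i$, and then invoke elementarity of $M^i$ to find $r\in D$ together with an isomorphism between the finite structures coding $p$ and $r$ which is the identity on the common material, sends the ``upper'' ordinals of $p$ above the chosen bounds, and, crucially, preserves the values of $F$ on the finitely many ordinals of $X_p$. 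Such an $r$ exists because its existence is a true statement over $H(\theta)$, witnessed by $p$, with parameters ($D$, $F$, the relevant restriction of $p$, and the bounds) in $M^i$; the $F$-values, being a finite $2$-valued pattern, are part of the reflected type.

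It then remains to amalgamate $p$ and $r$ into a common extension $p'=((\mtcl I^{p'}_\a\,:\,\a\in X_p\cup X_r), \mtcl N_p\cup\mtcl N_r)$, combining the interval data and closing off clause~(5) by adjoining, for each $\a\in X_r$ and each $N\in\mtcl N_p$ with $\a\in N$, the singleton $\{\ssup(N\cap\a)\}$ --- the same device used to pass from $p$, $r$ to $p'$ in Lemma~\ref{aleph15}. The main obstacle, and the real content of the lemma, is to check that $\mtcl N_p\cup\mtcl N_r$ is $\vec{\mtcl C}$-stratified and compatible with $F$ and that no clause-(3) clash is created. The cross-pairs fall into three regimes, and the bounds fixed during reflection are arranged so that each reduces to data already under control: a reflected $N\in\mtcl N_r$ (height $<\d$, trace inside $N^i\cap\o_2$) paired with (a) a model of $p$ of height $<\d$, where the placement above $\eta$, $\eta_\a$ forces the required divergence as in the amalgamation of Lemma~\ref{aleph15}; (b) the other top model $N^{1-i}$, where clause~(2c) for $\{N^0, N^1\}$ together with Corollary~\ref{agreement-0} confines any agreement of $N$ with $N^{1-i}$ below the divergence point of $N^0$ and $N^1$, yielding clause~(3); and (c) a model of $p$ of height $>\d$, where the agreement point of $N$ with it is inherited from that of $N^i$ by the coherence of $\vec{\mtcl C}$. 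Compatibility with $F$ across the new equal-height pairs follows from the $F$-preservation of the reflection together with Remark~\ref{r0}, just as the non-clash was argued for $q^*$. I expect regime~(b) --- the interaction of a reflected model with the other top model --- to be the genuinely new point, and it is precisely there that the ``same height'' hypothesis and conditions~(2)--(3) in the definition of $\vec{\mtcl C}$-stratified family are needed; everything else runs parallel to Lemma~\ref{aleph15}.
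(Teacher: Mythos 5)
Your proposal is correct and follows essentially the same route as the paper: the same condition $q^*$ (adjoining $M^0\cap H(\o_2)$, $M^1\cap H(\o_2)$ and the singletons $\{\ssup(M^i\cap\a)\}$ for $\a\in X_{q_0}$), the same two-case verification of $\vec{\mtcl C}$-stratification for the cross-pairs (with the interaction between a small model and the other top model being exactly where the paper's Claim \ref{cl8} does its work), and the same reflect-then-amalgamate genericity argument. The only real difference is presentational: where you invoke the $\eta$/$\eta_\a$ bounds of Lemma \ref{aleph15}, the paper instead requires the reflected condition $r$ to reproduce the trace set $R_q=\{N\cap \o_2\cap M^i\,:\,N\in\mtcl N_q,\ \d_N<\d_{M^i}\}$ (an element of $M^i$ by $\vec{\mtcl C}$-stratification) together with the cofinality-$\o_1$ and $F$-value pattern at the divergence points --- which is the precise content of your ``data already under control''.
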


\begin{proof}
Let $$\mtcl N=\{M^0\cap H(\o_2), M^1\cap H(\o_2)\}$$ and for every $\a\in X_{q_0}$ let $\r_\a=\ssup(M^0\cap\a)$. 
%If $M^0\cap\o_2\neq M^1\cap\o_2$, we let $\tau_i=\min((M^i\cap \o_2)\setminus M^{1-i})$ for each $i=0$, $1$. 

The proof will be complete once we show that
 %$$q^*=((\mtcl I^{q^*}_\a\,:\,\a\in X_{q^*}), \mtcl N_{q_0}\cup\mtcl N)$$
 $$q^*=((\mtcl I^{q^*}_\a\,:\,\a\in X_{q_0}), \mtcl N_{q_0}\cup\mtcl N)$$ is an $(M^i, \mtcl Q)$-generic condition for $i=0$, $1$, 
%where $X_{q^*}=X_{q_0}$ if $M^0\cap \o_2 = M^1 \cap\o_2$ and $X_{q^*}=X_{q_0}\cup\{\tau_0, \tau_1\}$ otherwise, and 
where $\mtcl I^{q^*}_\a=\mtcl I^q_\a\cup \{\{\r_\a\}\}$.\footnote{Of course $\{\r_\a\}=[\r_\a,\,\r_\a+1)$.}

\begin{claim}\label{cl8}
 $\mtcl N_{q_0}\cup\mtcl N$ is $\vec{\mtcl C}$-stratified.
 \end{claim}
 
 \begin{proof}
 Since $\mtcl N_{q_0}$ and $\mtcl N$ are both $\vec{\mtcl C}$-stratified and $\mtcl N_{q_0}\sub \mtcl K^{\vec e}_{\vec{\mtcl C}}\cap M^0$, it suffices to show that if $N\in\mtcl N_{q_0}$, then $$\a:=\max(\cl(N\cap\o_2)\cap\cl(M^1\cap\o_2))$$ exists, $\a\in M^1$, and there is some $\n<\d_{M^1}$ such that $N\cap\a = \bigcup_{\g\in C_{\a, \n}} e_\g``\d_N$. 
 
 Suppose first that $N\cap\o_2\sub M^1$ and let $\a=\ssup(N\cap\o_2)$. Since $N\in M^0\cap\mtcl K^{\vec e}_{\vec{\mtcl C}}$, we then have that there is some $\n<\d_{M^0}=\d_{M^1}$ such that $N\cap\o_2=\bigcup_{\g\in C_{\a, \n}}e_\g``\d_N$. By $\vec{\mtcl C}$-stratification of $\mtcl N$ and since $\a\in M^0$, $\a$ cannot be $\ssup(M^0\cap M^1\cap\o_2)$, so it must be the case that $\a\in M^1$. Since $\a=\max(\cl(N\cap\o_2)\cap\cl(M^1\cap\o_2))$, we are done in this case. 
 
 Suppose now that $\b=\min((N\cap\o_2)\setminus M^1)$ exists. Since $\b\in M^0$, again by $\vec{\mtcl C}$-stratification of $\mtcl N$ it follows that $\cf(\b)=\o_1$. Then $C_{\b, 0}\in N$, from which we get that if $\a=\ssup(N\cap\b)$, then $N\cap\a=\bigcup_{\g\in C_{\b, 0}\cap\a}e_\g``\d_N$. Then, by coherence of $\vec{\mtcl C}$, we have that $C_{\b, 0}\cap\a\in \mtcl C_\a$. Again since $N\in M^0$, $ C_{\b, 0}\cap\a=C_{\a, \n}$ for some $\n<\d_{M^0}=\d_{M^1}$. Using once again the $\vec{\mtcl C}$-stratification of $\mtcl N$ it follows that $\a\in M^1$, which finishes the proof in this case since $\a= \max(\cl(N\cap\o_2)\cap\cl(M^1\cap\o_2))$. 
 \end{proof}

%By hypothesis, $\mtcl N$ is $\vec{\mtcl C}$-stratified and compatible with $F$, using which, together with the fact that $\mtcl N_{q_0}\in M^0$ and $\d_{M^0}=\d_{M^1}$, it is immediate to see that $\mtcl N_{q_0}\cup\mtcl N$ is $\vec{\mtcl C}$-stratified as well, and of course it is also compatible with $F$.
Also, since $\mtcl N_{q_0}$ and $\mtcl N$ are both compatible with $F$, so is $\mtcl N_{q_0}\cup\mtcl N$. In addition, for every $\a\in X_{q_0}$ and every $I\in\mtcl I^{q_0}_\a$, $\min(I)<\r_\a$ and $\r_\a\notin M^0$, and for all $\a<\a'$ in $X_{q_0}$, $\r_\a<\a<\r_{\a'}$. It thus easily follows that $q^*$ is a condition in $\mtcl Q$. Since $q^*$ of course extends $q_0$, it suffices to prove that $q^*$ is $(M^i, \mtcl Q)$-generic for each $i=0$, $1$. For this, suppose $D\in M^i$ is an open and dense subset of $\mtcl Q$ and $q$ is an extension of $q^*$ in $D$. We will find a condition in $D\cap M^i$ compatible with $q$.   

Let $\Delta=\{\d_N\,:\,N\in\mtcl N_q\}\cap\d_{M^i}$. Let us note that, by $\vec{\mtcl C}$-stratification of $\mtcl N_q$ and $M^i\in\mtcl N_q$, $$R_q=\{N\cap \o_2\cap M^i\,:\,N\in\mtcl N_q,\,\d_N\in\Delta\}\in M^i.$$ Using this, and by a reflection argument as in the proof of Lemma \ref{aleph15}, we may find in $M^i$ a condition $r\in D$ such that
%, by an argument as in the proof of Lemma \ref{dens0}, 
$$q':=((\mtcl I^q_\a\oplus\mtcl I^r_\a\,:\,\a\in X_q\cup X_r), \mtcl N_q\cup\mtcl N_r)\in\mtcl Q,$$ where $\mtcl I^q_\a\oplus\mtcl I^r_\a$ is defined as follows for each $\a\in X_q\cup X_r$.
 
 \begin{enumerate}
 \item If $\a\in X_q\setminus X_r$, then $\mtcl I^q_\a\oplus\mtcl I^r_\a=\mtcl I^q_\a$.
 \item If $\a\in X_r\setminus X_q$, then $$\mtcl I^q_\a\oplus\mtcl I^r_\a=\mtcl I^r_\a\cup\{\{\ssup(N\cap\a)\}\,:\,N\in\mtcl N_q,\,\a\in N\}$$
 \item If $\a\in X_q\cap X_r$, then $\mtcl I^q_\a\oplus\mtcl I^r_\a$ is the unique set $\mtcl I$ of pairwise disjoint intervals with $$\{\min(I)\,:\,I\in\mtcl I\}=\{\min(I)\,:\,I\in\mtcl I^q_\a\cup\mtcl I^r_\a\}$$ such that for every $I\in\mtcl I$, if $\g_0=\min(I)$, then 
 \begin{enumerate}
 \item $\ssup(I)=\ssup(I_0)$ in case $I_0\in\mtcl I^q_\a$, $\min(I_0)=\g_0$, and there is no $J\in\mtcl I^r_\a$ such that $\min(J)=\g_0$; 
 \item $\ssup(I)=\ssup(I_1)$ in case $I_1\in\mtcl I^r_\a$, $\min(I_1)=\g_0$, and there is no $J\in\mtcl I^q_\a$ such that $\min(J)=\g_0$; 
 \item $\ssup(I)=\max\{\ssup(I_0), \ssup(I_1)\}$ in case $I_0\in\mtcl I^q_\a$,  $I_1\in\mtcl I^r_\a$,  $\min(I_0)=\g_0$, and $\min(I_1)=\g_0$.
\end{enumerate}
\end{enumerate}

More specifically, we find $r\in D\cap M^i$ with the following properties.

\begin{enumerate}
\item For all $\a\in X_r$, $I\in\mtcl I^r_\a$, $\a\in X_q$, and $I'\in\mtcl I^q_{\a'}$, if $\min(I)=\min(I')$, then $F(\a)=F(\a')$. 
\item For every $N\in\mtcl N_r$ such that $\d_N\in\Delta$ there is some $\sub$-maximal  $X\in\ R_q$ such that $X$ is an initial segment of $N\cap\o_2$. Moreover, $X$ is a proper initial segment of $N\cap\o_2$ if and only if there if there is some $N'\in\mtcl N_q$ such that $X$ is a proper initial segment of $N'\cap\o_2$, in which case, for every such $N'$, if $$\a_0=\min((N\cap\o_2)\setminus X)$$ and $$\a_1=\min((N'\cap\o_2)\setminus X),$$ then $\cf(\a_0)=\cf(\a_1)=\o_1$ and $F(\a_0)=F(\a_1)$. Also, for every $X\in R_q$ there is some $N\in\mtcl N_r$ such that $\d_N\in\Delta$ and such that $X$ is an initial segment of $N\cap\o_2$.
\end{enumerate}

We can indeed find such an $r\in M^i$, by correctness of $M^i$, since the existence of an $r$ with the above properties is a true fact, as witnessed by $q$, which can be expressed by a sentence with parameters in $M^i$. And given $r\in M^i$ as above, the amalgamation $q'$ of $r$ and $q$ described earlier is a condition in $\mtcl Q$. For this, it is enough to show the following claim, as all other clauses in the definition of $\mtcl Q$-condition are clear.

\begin{claim} $\mtcl N_q\cup\mtcl N_r$ is $\vec{\mtcl C}$-stratified and compatible with $F$. 
\end{claim}

\begin{proof}
We can show, by an argument similar to the one in the proof of Claim \ref{cl8}, that for every $N\in \mtcl N_r$ and $M\in\mtcl N_q$, if $\d_{M^i}\leq \d_M$, then $\a:=\max(\cl(N\cap\o_2)\cap\cl(M\cap\o_2))$ exists, $\a\in M$, and $N\cap\a=\bigcup_{\g\in C_{\a, \n}}e_\g``\d_N$ for some $\n<\d_M$.  And, by construction, $\{N\in\mtcl N_q\,:\,\d_N\in\Delta\}\cup\mtcl N_r$ is $\vec{\mtcl C}$-stratified and compatible with $F$. Putting these two facts together we get the conclusion. 
\end{proof}

 This finishes the proof of the lemma since $q'$ extends both $q$ and $r$.  
\end{proof}

\begin{remark}
Unlike in the proof of Lemma \ref{aleph15}, our models $M^0$ and $M^1$ in the above proof need not be of the form $M^i=\bigcup_{i<\d_{M^i}}M^i_\nu$, with $(M^i_\nu)_{\nu<\d_{M^i}}$ being a continuous $\in$-chain of elementary submodels containing the relevant objects.  This is of course thanks to the presence of clause (3) in the definition of $\vec{\mtcl C}$-stratified family.
\end{remark}

The following is an immediate corollary from Lemma \ref{Q-proper}. 

\begin{corollary}\label{Q-proper-cor} 
$\mtcl Q$ is proper with respect to $\mtcl K^{\vec e}_{\vec{\mtcl C}}$. 
\end{corollary}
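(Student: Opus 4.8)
The statement unwinds, by the definition of being \emph{proper with respect to} $\mtcl K^{\vec e}_{\vec{\mtcl C}}$, to the following: for every cardinal $\t$ with $\mtcl Q\in H(\t)$ there is a club $D\sub [H(\t)]^{\al_0}$ such that every $N\in D\cap\mtcl K^{\vec e}_{\vec{\mtcl C}}$ and every $p\in\mtcl Q\cap N$ admit an $(N,\mtcl Q)$-generic extension. The plan is to read this off Lemma \ref{Q-proper} directly, by taking the two models appearing there to coincide. Fix such a $\t$ and let $D$ be the set of countable $N\prec H(\t)$ with $F$, $\vec{\mtcl C}$, $\vec e$, the fixed enumerations $(C_{\a,\n}\,:\,\a\in\Lim(\o_2),\,\n<\o_1)$, and $\mtcl Q$ all members of $N$. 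This $D$ is a club of $[H(\t)]^{\al_0}$.

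Now I would fix $N\in D\cap\mtcl K^{\vec e}_{\vec{\mtcl C}}$ and $q_0\in\mtcl Q\cap N$, and apply Lemma \ref{Q-proper} with $M^0=M^1=N$. The only thing to verify is that the hypotheses of that lemma hold for this degenerate choice: the two models trivially have the same height, $F$, $\vec{\mtcl C}$, $\vec e\in N$ since $N\in D$, and $\{N\}$ — i.e.\ $\{M^0\cap H(\o_2), M^1\cap H(\o_2)\}$ collapsed to a single model — is a $\vec{\mtcl C}$-stratified family compatible with $F$. For this last point, condition (1) in the definition of $\vec{\mtcl C}$-stratified family holds precisely because $N\in\mtcl K^{\vec e}_{\vec{\mtcl C}}$; here one uses that $N\cap\o_2=(N\cap H(\o_2))\cap\o_2$, so membership in $\mtcl K^{\vec e}_{\vec{\mtcl C}}$ is unaffected by passing to the trace on $H(\o_2)$. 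Conditions (2) and (3), together with compatibility with $F$, are all vacuous for a singleton family, since their hypotheses (two members with $\d_{N_0}=\d_{N_1}$ but $N_0\cap\o_2\neq N_1\cap\o_2$, respectively with $\d_{N_0}<\d_{N_1}$) are never met when the two members coincide.

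Lemma \ref{Q-proper} then provides an extension $q^*\in\mtcl Q$ of $q_0$ which is $(M^i,\mtcl Q)$-generic for $i=0,1$; since $M^0=M^1=N$, this is exactly an $(N,\mtcl Q)$-generic extension of $q_0$, as required. I do not expect any genuine obstacle here: the substantive work is already carried out in Lemma \ref{Q-proper}, which handles two \emph{distinct} models of equal height, and the corollary merely instantiates that lemma in the degenerate case where the two models are identified, the whole machinery built to reconcile two traces becoming trivial for a single model.
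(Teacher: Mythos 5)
Your proposal is correct and is exactly the intended derivation: the paper labels the corollary an immediate consequence of Lemma \ref{Q-proper}, obtained precisely by instantiating that lemma with $M^0=M^1=N$ for $N$ in the club of countable elementary submodels of $H(\t)$ containing the relevant parameters, the stratification and $F$-compatibility hypotheses being vacuous for a singleton family. No gap.
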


%\begin{proof}
%It suffices to note that if $\t$ and $\chi$ are cardinals such that $\mtcl Q\in H(\t)$ and $H(\t)\in H(\chi)$ and $N^*$ is a countable elementary elementary submodel of $H(\chi)$ such that $F$, $\vec C$, $\vec e$, $\t\in N^*$, then $M:=N^*\cap H(\t)$ can be written as $\bigcup_{\n<\d_M}M_\n$ for a $\sub$-continuous $\in$-chain $(M_\n)_{\n<\d_M}$ of countable elementary submodels of $H(\t)$ such that $F$, $\vec C$, $\vec e\in M_0$. Hence, by Lemma \ref{Q-proper} applied to $M^0=M^1=M$, we have that for every $q\in M$ there is an $(M, \mtcl Q)$-generic extension of $q$. 
%\end{proof}

\begin{lemma}\label{Q-cc} $\mtcl Q$ has the $\al_2$-c.c. \end{lemma}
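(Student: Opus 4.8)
The statement to prove is that $\mtcl Q$ has the $\al_2$-chain condition. The plan is to show that any collection of $\al_2$-many conditions must contain two that are compatible, and the natural way to do this is a $\Delta$-system argument combined with the amalgamation machinery already developed for Lemma \ref{Q-proper}. First I would fix an arbitrary family $\{q_\x\,:\,\x<\o_2\}$ of $\mtcl Q$-conditions and aim to extract two compatible ones. For each $\x$, the condition $q_\x=((\mtcl I^{q_\x}_\a\,:\,\a\in X_{q_\x}), \mtcl N_{q_\x})$ consists of finitely much data: a finite set $X_{q_\x}\sub S$, finitely many finite families of intervals, and a finite family $\mtcl N_{q_\x}$ of countable models. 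The heights $\{\d_N\,:\,N\in\mtcl N_{q_\x}\}$ form a finite subset of $\o_1$, so since $\o_2$ is regular and $\o_1^{\al_0}$ is not the obstruction here, one wants to thin out so that the relevant finite pieces of data line up.

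The key combinatorial point is that, by $\vec{\mtcl C}$-stratification, each $\mtcl N_{q_\x}$ is linearly quasi-ordered by height, and the models of a given height have their traces on $\o_2$ agreeing on initial segments (Corollary \ref{agreement-0} and Remark \ref{r0}). The plan is therefore: (i) thin out to a subfamily of size $\al_2$ on which the finite sequence of heights is constant, on which the order-type and intersection pattern of the models is fixed, and on which the $F$-colours attached to the relevant splitting ordinals $\min((N\cap\o_2)\setminus N')$ are constant; (ii) apply a $\Delta$-system lemma to the finite sets $X_{q_\x}$ together with the ordinal supports of the models, so that below the root everything coincides and above the root the supports are disjoint and increasing across two chosen conditions $q_\x$, $q_{\x'}$. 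For this counting to go through at $\al_2$ one needs $\CH$ (so that the countable models and the countable interval-data range over a set of size $\al_1$), which is available since $\MM_{\al_2}(\aleph_2\mbox{-c.c.})$ is assumed only as a working hypothesis in this section — but more robustly, since the models have fixed finite height-sequence and each countable model closed under $\vec e$ is determined by $\o_1$-much data, the thinning is a standard $\Delta$-system computation once $\CH$ is invoked.

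Having isolated two conditions $q=q_\x$ and $r=q_{\x'}$ whose models agree on a root and are disjoint and height-aligned above it, with matching $F$-colours on all splitting points, I would amalgamate them exactly as in the proof of Lemma \ref{Q-proper}: form $q'=((\mtcl I^q_\a\oplus\mtcl I^r_\a\,:\,\a\in X_q\cup X_r), \mtcl N_q\cup\mtcl N_r)$ using the same $\oplus$ operation on interval-families. The verification that $q'\in\mtcl Q$ reduces to checking that $\mtcl N_q\cup\mtcl N_r$ is $\vec{\mtcl C}$-stratified and compatible with $F$ (clause (4)), together with clause (3) on distinct minima of intervals with different $F$-values and clause (5) relating minima of intervals to suprema $\ssup(N\cap\a)$; all of these are guaranteed by the thinning in step (i)–(ii) and are checked just as in Claim \ref{cl8} and the final claim of Lemma \ref{Q-proper}.

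**Main obstacle.** The hard part will be verifying that the union of the two model-families remains $\vec{\mtcl C}$-stratified, i.e.\ that clause (3) in the definition holds across the two families: for $N\in\mtcl N_q$ and $N'\in\mtcl N_r$ of different heights, one must locate $\a=\max(\cl(N\cap\o_2)\cap\cl(N'\cap\o_2))$, check $\a$ lies in the taller model, and exhibit the required $C_{\a, \n}$ witnessing $N\cap\a=\bigcup_{\g\in C_{\a, \n}}e_\g``\d_N$. This is precisely the coherence-of-$\vec{\mtcl C}$ argument carried out in Claim \ref{cl8}, so I would reuse that argument; ensuring the thinning in steps (i)–(ii) was fine enough to make that argument applicable (in particular that models of equal height satisfy the disjointness-above-the-root condition (2c)) is where the care is needed. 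Compatibility with $F$ then follows directly from the colour-matching secured during the thinning, completing the proof that any $\al_2$ conditions contain a compatible pair, hence $\mtcl Q$ has the $\al_2$-c.c.
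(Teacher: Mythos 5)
There is a genuine gap. Your central mechanism --- thinning an arbitrary $\al_2$-sized family by a $\Delta$-system so that two of its members amalgamate directly --- does not deliver what is needed, because the constraints that decide whether $\mtcl N_q\cup\mtcl N_r$ is $\vec{\mtcl C}$-stratified and compatible with $F$ are constraints on \emph{pairs} of models drawn from the two different conditions, not on each condition separately. Concretely: for $N\in\mtcl N_q$ and $N'\in\mtcl N_r$ of equal height with $N\cap\o_2\neq N'\cap\o_2$, clause (2) of $\vec{\mtcl C}$-stratification requires that the splitting ordinals $\min((N\cap\o_2)\setminus N')$ and $\min((N'\cap\o_2)\setminus N)$ both have cofinality $\o_1$, that $\cl(N\cap\o_2)$ and $\cl(N'\cap\o_2)$ share no point above $\ssup(N\cap N'\cap\o_2)$, and (for $F$-compatibility) that $F$ takes the same value at the two splitting points. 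These data depend on the interaction of $N$ with $N'$; fixing the heights, the order-types, the intersection pattern within each $\mtcl N_{q_\x}$, and a $\Delta$-system on supports gives you no control over them, and two generic members of $\mtcl K^{\vec e}_{\vec{\mtcl C}}$ of the same height can perfectly well first split at an ordinal of countable cofinality. You flag this as ``where the care is needed'' and point to Claim \ref{cl8}, but that claim's argument is not available to you: it uses essentially that the models of one family are \emph{elements} of a model of the other family ($\mtcl N_{q_0}\sub M^0$) and that the pair $\{M^0,M^1\}$ is already known to be $\vec{\mtcl C}$-stratified --- neither holds for two unrelated conditions surviving a thinning. A secondary but real error: you invoke $\CH$ for the counting, but in this section $\CH$ is \emph{false} under the standing hypothesis (the paper notes $\FA_{\al_2}({}^{<\o}2)$ gives $2^{\al_0}\geq\al_3$); being a reductio hypothesis does not make $\CH$ available, it makes its negation available.

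The paper's proof avoids all of this by not attempting to amalgamate two arbitrary members of the family. It fixes a maximal antichain $A$ of size $\geq\al_2$, attaches to each $q_i$ a model $M_i\in\mtcl K^{\vec e}_{\vec{\mtcl C}}$ containing it, takes an $\al_1$-sized $P\prec H(\chi)$ containing the enumeration, picks $i_0$ with $q_{i_0}\notin P$, and then \emph{by elementarity of $P$} finds $i_1\in P$ such that $\{M_{i_0},M_{i_1}\}$ is $\vec{\mtcl C}$-stratified --- the pairwise interaction conditions are arranged by reflecting a true existential statement, not by counting. Lemma \ref{Q-proper} then yields an $(M_{i_1},\mtcl Q)$-generic $q^*\leq q_{i_0}$, and the maximality of $A$ together with genericity produces some $q_{i_2}\in A\cap M_{i_1}$ compatible with $q_{i_0}$ but distinct from it, a contradiction. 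If you want to keep your outline, you would have to replace the $\Delta$-system step by this kind of reflection argument; as written, the step that hands you two amalgamable conditions is missing.
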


\begin{proof}
Suppose, towards a contradiction, that $(q_i\,;\,i<\l)$, for some cardinal $\l\geq\o_2$, is a one-to-one enumeration of a maximal antichain $A$ of $\mtcl Q$. Let $\t$ be a large enough cardinal. For every $i<\o_2$ let $M_i$ be a countable elementary submodel of $H(\t)$ belonging to $\mtcl K^{\vec e}_{\vec{\mtcl C}}$ and such that $p_i$, $F$, $\vec{\mtcl C}$, $\vec e$, $A\in M_i$. 

Let $P$ be an elementary submodel of some higher $H(\chi)$ such that $|P|=\al_1$ and $\vec{\mtcl C}$, $((q_i, M_i)\,:\,i<\l)\in P$. Since all $q_i$ are distinct and $\l\geq\o_2$, we may find $i_0$ such that $q_{i_0}\notin  P$. Now, working in $P$ and since $M_{i_0}\cap P\in P$ as $M_{i_0}\in\mtcl K^{\vec e}_{\vec{\mtcl C}}$, we may find $i_1\in P\cap\l$ such that $\d_{M_{i_0}}=\d_{M_{i_1}}$ and $\{M_{i_0}, M_{i_1}\}$ is $\vec{\mtcl C}$-stratified. By Lemma \ref{Q-proper} there is a condition $q^*\in\mtcl Q$ extending $q_{i_0}$ and such that $q^*$ is $(M_{i_1}, \mtcl Q)$-generic. But now, since $A\in M_{i_1}$ is a maximal antichain of $\mtcl Q$, we can find a common extension $q'$ of $q^*$ and some $q_{i_2}\in A\cap M_{i_1}$, which is a contradiction since $q_{i_2}\neq q_{i_0}$ yet $q'$ extends both $q_{i_0}$ and $q_{i_2}$.
\end{proof}

Let now $G$ be a $\mtcl Q$-generic filter. Given any $\a\in S$, let $$D^G_\a=\{\min(I)\,:\,I\in\mtcl I^q_\a,\,q\in G,\,\a\in X_q\}$$ By Lemma \ref{dens0}, $D^G_\a$ is an unbounded subset of $\a$.

\begin{lemma}\label{dens1} 
For every $\a\in S$, $D^G_\a$ is closed in $\a$. \end{lemma}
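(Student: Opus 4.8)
The plan is to first rewrite $D^G_\a$ in terms of the generic family of models, and then to show that each of its limit points is in fact realized by a member of that family. Using clause~(5) in the definition of $\mtcl Q$ together with the directedness of $G$, one sees that
$$D^G_\a=\{\ssup(N\cap\a)\,:\,N\in\mtcl N_G,\ \a\in N\},$$
where $\mtcl N_G=\bigcup_{q\in G}\mtcl N_q$. Next, by Fact~\ref{agreement} any two such models $N,N'$ with $\a\in N\cap N'$ satisfy $N\cap\a\sub N'\cap\a$ whenever $\d_N\leq\d_{N'}$, while by Remark~\ref{r0} (with $\a\in N\cap S$ and $\a\in N'\cap S$) equal suprema $\ssup(N\cap\a)=\ssup(N'\cap\a)$ force $\d_N=\d_{N'}$. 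Hence $N\mapsto\ssup(N\cap\a)$ is strictly increasing in the height $\d_N$, so it is an order isomorphism from $\{N\in\mtcl N_G:\a\in N\}$, ordered by height, onto $D^G_\a$, and the traces $N\cap\a$ form a $\sub$-chain below $\a$.

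To prove closure I would fix a limit point $\gamma<\a$ of $D^G_\a$ and choose $N_n\in\mtcl N_G$ with $\a\in N_n$, $\ssup(N_n\cap\a)\nearrow\gamma$, and $\d_{N_n}\nearrow\d:=\ssup_{n<\o}\d_{N_n}$; the chain of traces then gives $A:=\bigcup_{n<\o}(N_n\cap\a)$ with $\ssup(A)=\gamma$. The key point is structural: by the coherence of $\vec{\mtcl C}$ and the clause~(3) constraints of $\vec{\mtcl C}$-stratification relating the $N_n$ (each $N_n\cap\a$ being captured by a club from $\mtcl C_\a$, and these clubs cohering), the limiting set $A$ is again of the form $\bigcup_{\g\in C}e_\g``\d$ for some $C\in\mtcl C_\a$. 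In particular $A\in\V$ and $\gamma$ lies in the ground-model set $T:=\{\ssup(M\cap\a):M\in\mtcl K^{\vec e}_{\vec{\mtcl C}},\ \a\in M\}$. Equivalently, I would show directly that $T$ is closed in $\a$ — by building, exactly as in the proof of Claim~\ref{cl-proj-stat}, a continuous $\in$-chain of $\mtcl K^{\vec e}_{\vec{\mtcl C}}$-models whose traces in $\a$ are chosen coherently, so that membership in $\mtcl K^{\vec e}_{\vec{\mtcl C}}$ is preserved at limit stages — and conclude that the limit point $\gamma$ of $D^G_\a\sub T$ lies in $T$. Either way one fixes $M\in\mtcl K^{\vec e}_{\vec{\mtcl C}}$ in $\V$ with $\a\in M$ and $\ssup(M\cap\a)=\gamma$.

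It then remains to place such a model into $\mtcl N_G$. Fix $q\in G$ with $\a\in X_q$. I claim that it is dense below $q$ to adjoin a model whose $\a$-trace has supremum $\gamma$: given $q'\leq q$, one selects $M\in\mtcl K^{\vec e}_{\vec{\mtcl C}}$ with $\ssup(M\cap\a)=\gamma$, retaining enough freedom in $M$ above $\a$, and checks that $\mtcl N_{q'}\cup\{M\}$ is $\vec{\mtcl C}$-stratified and compatible with $F$ — the new instances of clause~(3) being verified from the coherence of $\vec{\mtcl C}$ exactly as in Claim~\ref{cl8}, using that $M\cap\a$ sits coherently inside the traces of the existing models of larger height and contains those of smaller height. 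Adjoining $M$ and putting $\{\ssup(M\cap\b)\}$ into $\mtcl I^{q'}_\b$ for each $\b\in X_{q'}$ produces an extension of $q'$ with $M\in\mtcl N$. Since $\gamma<\o_2$ is an ordinal and hence lies in $\V$, this dense set is a ground-model object below $q\in G$, so genericity yields some $M\in\mtcl N_G$ with $\ssup(M\cap\a)=\gamma$, whence $\gamma\in D^G_\a$.

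The main obstacle is the interplay between the $\o$-limit, which is a $\V[G]$-phenomenon invisible to any single finite condition, and the need for a ground-model witness. Concretely, the crux is the coherence argument showing that the limiting trace $A$ (equivalently, that $T$ is closed and contains $\gamma$) is a genuine $\vec{\mtcl C}$-coherent ground-model object realized by a model of height $\d$ in $\mtcl K^{\vec e}_{\vec{\mtcl C}}$, together with the amalgamation check that this model joins conditions of $G$ while respecting $\vec{\mtcl C}$-stratification; both of these rest squarely on the coherence of the fixed $\Box_{\o_1,\o_1}$-sequence $\vec{\mtcl C}$.
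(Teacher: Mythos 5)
Your proposal does not match the paper's argument and contains a genuine gap. The paper's proof is a short \emph{blocking} argument that exploits the fact that conditions carry intervals rather than points: if $\d<\a$ is forced by some $q$ (with $\a\in X_q$) to be a limit point of $D^{\dot G}_\a$ but $\d\neq\min(I)$ for every $I\in\mtcl I^q_\a$, one takes the interval $I_0\in\mtcl I^q_\a$ with $\min(I_0)<\d$ maximal and extends $q$ to $q'$ by stretching $I_0$ to $[\min(I_0),\,\d+1)$. Since in any further extension every interval either has the same left endpoint as an old one (with sup only growing) or is disjoint from $[\min(I_0),\d+1)$, the condition $q'$ forces $D^{\dot G}_\a\cap\d\sub\min(I_0)+1$, contradicting that $\d$ is a limit point. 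The intervals are in the forcing precisely to make this two-line argument work; your proof never uses them.

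The concrete gap in your route is the density claim at the end. As stated --- with $q\in G$ only required to satisfy $\a\in X_q$ --- the set of conditions adjoining a model $M$ with $\ssup(M\cap\a)=\gamma$ is \emph{not} dense below $q$: some $q'\leq q$ may contain an interval $I\in\mtcl I^{q'}_\a$ with $\min(I)<\gamma<\ssup(I)$, and then no extension of $q'$ can have $\gamma$ as a left endpoint (by clause (5), pairwise disjointness, and the requirement that sups of intervals only increase), so no model with $\a$-trace of supremum $\gamma$ can ever be adjoined below $q'$. To exclude this you would have to argue that $\gamma$, being a limit point of $D^G_\a$, can never be swallowed into the interior of an interval --- but that is exactly the paper's blocking observation, at which point the entire detour through ground-model members of $\mtcl K^{\vec e}_{\vec{\mtcl C}}$ becomes unnecessary. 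Moreover, even granting that, the assertion that $\mtcl N_{q'}\cup\{M\}$ can always be made $\vec{\mtcl C}$-stratified and compatible with $F$ is unsupported: if $\mtcl N_{q'}$ already contains a model of the same height as $M$, clause (2) of $\vec{\mtcl C}$-stratification and $F$-compatibility impose constraints at the splitting ordinals that a prescribed $M$ with $\ssup(M\cap\a)=\gamma$ need not satisfy. The first half of your argument (that the limit of the traces is again realized by a model in $\mtcl K^{\vec e}_{\vec{\mtcl C}}$) is also left at the level of a sketch, but it is in any case not needed.
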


\begin{proof}
Let $\d<\a$ be a limit ordinal forced by some $q\in \mtcl Q$ with $\a\in X_q$ to be a limit point of $D^{\dot G}_\a$ and suppose, towards a contradiction, that $\d\neq \min(I)$ for any $I\in\mtcl I^q_\a$. By the choice of $q$ we may assume that there is some $I\in\mtcl I^q_\a$ such that $\min(I)<\d$. Letting $I_0$ be the unique such $I$ with $\min(I)$ maximal within $\mtcl I^q_\a$ we may now extend $q$ to a condition $q'$ such that $[\min(I_0), \d+1)\in\mtcl I^{q'}_\a$. But $q'$ forces that $D^{\dot G}_\a\cap\d\sub \min(I_0)+1<\d$, which contradicts the assumption that $q$ forced $D^{\dot G}_\a$ to be cofinal in $\d$. 
\end{proof}

It follows from Lemmas \ref{dens0}, \ref{dens2}, and \ref{dens1} together that if we aim to define $H:\o_2\into 2$ by letting $H(\eta)=F(\alpha)$ for any $\a\in S$ such that $\eta\in D^G_\a$ (and, say, $H(\eta)=0$ if there is no $\a$ as above), then $H$ is a well-defined function which uniformizes $F$ mod.\ clubs, as witnessed by $D^G_\a$ for $\a\in S$. This concludes the proof of Theorem \ref{mainthm}.

\begin{remark}
We point out that the inconsistency proof of the forcing axiom $\MM_{\al_2}(\al_2\mbox{-c.c.})$ we have given shows the impossibility of having $\FA_{\al_2}(\Gamma)$ for the class $\Gamma$ of posets which have the $\al_{1.5}$-c.c.\ with respect to families of models which are simultaneously $\vec{\mtcl C}$-stratified, for a fixed $\Box_{\o_1, \o_1}$-sequence $\vec{\mtcl C}$, and $F$-compatible for arbitrarily fixed choices of $F$. On the other hand, the methods of \cite{genMA} allow us to build models of the forcing axiom $\FA_{\al_2}(\Gamma_0)$, where $\Gamma_0$ is the class of partial orders with the $\al_{1.5}$-c.c.\ with respect to families which are $\vec{\mtcl C}$-stratified and $F$-compatible, for a fixed $\Box_{\o_1, \o_1}$-sequence $\vec{\mtcl C}$ and a fixed $F:S^2_1\into 2$. 
\end{remark}

\section{$\MA^{1.5}_{\al_2}(\mbox{stratified})$ implies $\lnot\wCC$.}\label{s3}

The goal of this section is to prove the following theorem.

\begin{theorem}\label{wcc}
$\MA^{1.5}_{\al_2}(\mbox{stratified})$ implies $\lnot\wCC$. 
\end{theorem}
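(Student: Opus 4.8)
The plan is to exhibit, under $\MA^{1.5}_{\al_2}(\mbox{stratified})$, a single function witnessing $\lnot\wCC$; that is, a function $\tilde f:\o_1\into\o_1$ such that for every $\a<\o_2$, $\tilde f$ dominates the canonical function $g_\a$ on a club. Since the canonical functions for a fixed $\a$ agree modulo clubs, this is exactly the negation of $\wCC$. To produce such a function I would design a forcing $\mtcl P$ with the $\al_{1.5}$-c.c.\ with respect to finite stratified families of models—so that the forcing axiom applies—which adds a function $f:\o_1\into\o_1$ whose values at the heights of the generic models dominate the order types of those models, and then pass to the running supremum of $f$.

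Concretely, I would let conditions of $\mtcl P$ be pairs $p=(f^p,\mtcl N_p)$, where $f^p:a^p\into\o_1$ is a finite partial function with $a^p\in[\o_1]^{{<}\o}$ and $\mtcl N_p$ is a finite stratified family of countable elementary submodels of $(H(\o_2);\in,\vec e)$ closed under $\vec e$, subject to the single linking requirement that for every $N\in\mtcl N_p$ we have $\d_N\in a^p$ and
\[ f^p(\d_N)\geq\ot(N\cap\o_2). \]
Extension is coordinatewise inclusion. Stratification is exactly what makes this requirement simultaneously satisfiable with values below $\o_1$: if $\d_{N_0}<\d_{N_1}$ then $\ot(N_0\cap\o_2)<\d_{N_1}\leq\ot(N_1\cap\o_2)$, so the intended values are increasing and bounded by $\o_1$. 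I would then prove that $\mtcl P$ has the $\al_{1.5}$-c.c.\ with respect to finite stratified families by an amalgamation argument entirely parallel to, but considerably simpler than, that of Lemma \ref{aleph15} (there is no $h$, $i$ coherence to maintain, only the function values and the models). The relevant density facts are easy: for each $\n<\o_1$ it is dense that $\n\in a^p$ (so the generic $f$ is total), and for each $\a<\o_2$, each surjection $\pi_\a:\o_1\into\a$, and each $\b<\o_1$ it is dense that some $N\in\mtcl N_p$ has $\pi_\a\in N$ and $\d_N>\b$ (choose $\d_N$ above every $\ot(N'\cap\o_2)$ occurring in $\mtcl N_p$, to preserve stratification). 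These dense sets number $\o_2$ altogether.

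Applying $\MA^{1.5}_{\al_2}(\mbox{stratified})$ to this family of dense sets yields a filter $G$; let $f=\bigcup_{p\in G}f^p:\o_1\into\o_1$ and set $\tilde f(\l)=\ssup\{f(\n):\n\leq\l\}$, a total function $\o_1\into\o_1$. Fix $\a<\o_2$ with surjection $\pi_\a$, and write $g_\a(\n)=\ot(\pi_\a``\n)$. Whenever $N\in\mtcl N_G:=\bigcup_{p\in G}\mtcl N_p$ satisfies $\pi_\a\in N$ we have $N\cap\a=\pi_\a``\d_N$, hence $f(\d_N)\geq\ot(N\cap\o_2)\geq\ot(N\cap\a)=g_\a(\d_N)$; by the second family of dense sets such heights $\d_N$ are unbounded in $\o_1$, so $E_\a:=\{\n:f(\n)\geq g_\a(\n)\}$ is unbounded. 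The final and most delicate point is the upgrade from an unbounded set to a club. Here I would use that $g_\a$ is continuous along a club: the set $D_\a=\{\l:\pi_\a``\l\mbox{ is an ordinal}\}$ is a club, and $g_\a$ is continuous at every limit point of $D_\a$ that lies in $D_\a$. Then for every $\l$ that is simultaneously a limit point of $E_\a$ and a limit point of $D_\a$ belonging to $D_\a$—and the set of such $\l$ contains a club—one has
\[ \tilde f(\l)\geq\ssup\{f(\n):\n\in E_\a\cap\l\}\geq\ssup\{g_\a(\n):\n\in E_\a\cap\l\}=g_\a(\l), \]
the final equality holding because $E_\a\cap\l$ is cofinal in $\l$ while $g_\a$ is non-decreasing and continuous at $\l$. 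Thus $\{\n:\tilde f(\n)<g_\a(\n)\}$ is disjoint from a club, hence non-stationary, and since $\a$ was arbitrary $\tilde f$ witnesses $\lnot\wCC$.

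The main obstacle lies in two places. First, the verification that $\mtcl P$ has the $\al_{1.5}$-c.c.\ with respect to finite stratified families: stratification must be used to amalgamate a condition with a reflected copy over a suitable model, exactly in the spirit of the proof of Lemma \ref{aleph15}, though here the absence of any coherence conditions on $h$ and $i$ makes the bookkeeping much lighter. Second, and more conceptually, the club-versus-unbounded issue just addressed—naively the generic only secures domination on an unbounded (indeed only a stationary) set of model heights, and it is precisely the passage to the running supremum $\tilde f$, combined with the continuity of canonical functions on the club $D_\a$, that promotes this to the genuine club domination that $\lnot\wCC$ demands.
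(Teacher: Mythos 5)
Your overall strategy—finite conditions carrying a partial function $f^p$ together with a finite stratified family of side models $N$ linked by $f^p(\d_N)\geq\ot(N\cap\o_2)$—is exactly the engine of the paper's proof, and your verification that $E_\a=\{\n:f(\n)\geq g_\a(\n)\}$ contains the unbounded set of heights $\d_N$ with $\a\in N$ is correct. But the final step, upgrading unbounded domination to club domination via the running supremum $\tilde f$, contains a fatal error: canonical functions are \emph{not} continuous on a club once $\a\geq\o_1+1$. Indeed $g_{\o_1+1}=g_{\o_1}+1$ takes the successor value $\l+1$ at club-many $\l$, while $\ssup_{\n<\l}g_{\o_1+1}(\n)\leq\l$ on the club of closure points of $\n\mapsto\ot(\pi_\a``\n)$; so $g_{\o_1+1}$ is discontinuous at club-many points. (Your set $D_\a=\{\l:\pi_\a``\l\mbox{ is an ordinal}\}$ is not even unbounded for $\a>\o_1$, since $\pi_\a``\l$ is a countable non-transitive subset of $\a$.) The upgrade genuinely fails, not just the proof of it: if $U$ is unbounded and non-stationary and $f(\n)=\n+1$ on $U$, $f(\n)=0$ off $U$, then $f$ dominates $g_{\o_1+1}$ on the unbounded set $U$, yet $\tilde f(\l)\leq\l<g_{\o_1+1}(\l)$ for club-many $\l$. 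Since the forcing axiom only guarantees that the heights $\d_N$ of generic models meeting your dense sets form an unbounded set (there is no way to make them closed with $\al_2$-many dense sets), domination on a club cannot be recovered after the fact.

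The missing idea, which is how the paper proceeds, is to force the witnessing clubs explicitly: each condition also carries finite approximations $h^p_\a$ to normal functions $h_\a:\o_1\into\o_1$, with the requirements that $\ot(e_\a``h^p_\a(\n))<f^p(h^p_\a(\n))$ for $\n\in\dom(h^p_\a)$ and that $h^p_\a(\d_N)=\d_N$ for every side model $N$ containing $\a$. The ranges $C^G_\a=\range(h^G_\a)$ are then clubs by density arguments, and $f^G$ dominates $g_\a$ on $C^G_\a$ by construction; the side-condition clause is precisely what makes new points of $C_\a$ (namely the $\d_N$) addable while keeping $f$ dominating there. Separately, and more minor: your ``single linking requirement'' omits the coherence clause $f^p\restr\d_N\sub N$ for $N\in\mtcl N_p$; without it a condition extending $p^*$ can place a value $f^p(\d)\geq\d_{N^*}$ at some $\d\in N^*$, after which no $r\in N^*$ with $\d\in\dom(f^r)$ is compatible with $p$, so the genericity argument (hence the $\al_{1.5}$-c.c.\ with respect to stratified families) breaks. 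That omission is easily repaired; the continuity issue is not.
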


We fix a sequence $\vec e=(e_\a\,:\,0<\a<\o_2)$, 
%where $e_\a:|\a|\into\a$ is a bijection for each $\a<\o_2$. We also fix $\vec f=(f_\a\,:\,0<\a<\o_2)$, 
where $e_\a:\o_1\into \a$ is a surjection for each $\a$. 

We consider the following forcing notion $\mtcl R$. A condition in $\mtcl R$ is a triple $p=(f^p, (h^p_\a\,:\, \a\in X_p), \mtcl N_p)$, where:

\begin{enumerate}

\item $f^p\sub\o_1\times\o_1$ is a finite function.
\item $X_p\in [\o_2\setminus\{0\}]^{{<}\o}$
\item For each $\a\in X_p$, 
\begin{enumerate}
\item $h^p_\a\sub\o_1\times\o_1$ is a finite function which can be extended to a continuous strictly increasing function $h:\o_1\into\o_1$, and
\item for each $\n\in \dom(h^p_\a)$ we have that $h^p_\a(\n)\in \dom(f^p)$ and $\ot(e_\a``h^p_\a(\n))<f^p(h^p_\a(\n))$.
\end{enumerate}
\item $\mtcl N_p$ is a finite stratified family of countable elementary submodels of $(H(\o_2); \in, \vec e)$.
\item The following holds for each $N\in \mtcl N_p$.
\begin{enumerate}
\item $f^p\restr \d_N\sub N$;
\item $\d_N\in\dom(f^p)$ and $f^p(\d_N)\geq\ot(N\cap\o_2)$;
\item for every $\a\in X_p\cap N$, 
\begin{enumerate}
\item $h^p_\a\restr\d_N\sub N$,
\item $\d_N\in \dom(h^p_\a)$, and
\item $h^p_\a(\d_N)=\d_N$.
\end{enumerate}
\end{enumerate}
\end{enumerate}

Given $\mtcl R$-conditions $p_0$ and $p_1$, $p_1$ extends $p_0$ iff
\begin{enumerate}
\item $f^{p_0}\sub f^{p_1}$,
\item $X_{p_0}\sub X_{p_1}$, and
\item for every $\a\in X_{p_0}$, $h^{p_0}_\a\sub h^{p_1}_\a$.
.\end{enumerate}

The following density lemmas are easy.

\begin{lemma}\label{d0}
For every $p\in\mtcl R$ and every nonzero $\b<\o_2$ there is a $\mtcl R$-condition $p^*$ extending $p$ and such that $\b\in X_{p^*}$.
\end{lemma}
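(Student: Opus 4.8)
The plan is to leave $f$ and the family of models completely untouched and simply adjoin $\b$ to the support, defining its new coordinate to be the appropriate partial identity function. We may of course assume $\b\notin X_p$, else $p^*=p$ works. I would then set
$p^*=(f^p, (h^{p^*}_\a\,:\,\a\in X_p\cup\{\b\}), \mtcl N_p)$, where $h^{p^*}_\a=h^p_\a$ for each $\a\in X_p$ and $h^{p^*}_\b=\{(\d_N, \d_N)\,:\,N\in\mtcl N_p,\,\b\in N\}$. Since all the pairs comprising $h^{p^*}_\b$ lie on the diagonal, this finite function is extendable to the identity map on $\o_1$, which is continuous and strictly increasing; thus clause (3)(a) holds for the coordinate $\b$, and $p^*$ visibly extends $p$. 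Since $f$ and $\mtcl N_p$ are unchanged, all clauses referring only to old data — namely (1), (2), (4), (5)(a), (5)(b), and the parts of (3) and (5)(c) concerning $\a\in X_p$ — are inherited directly from $p$.

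The only clauses that genuinely need checking are (3)(b) and (5)(c), both for the new coordinate $\b$. For (5)(c)(ii)--(iii), note that whenever $\b\in N$ for $N\in\mtcl N_p$ we have $\d_N\in\dom(h^{p^*}_\b)$ and $h^{p^*}_\b(\d_N)=\d_N$ directly by construction. For (5)(c)(i), I would observe that $h^{p^*}_\b\restr\d_N$ consists of pairs $(\d_{N'}, \d_{N'})$ with $\d_{N'}<\d_N$; since $\d_N=N\cap\o_1$, every such $\d_{N'}$ lies in $N$, and hence so does the pair $(\d_{N'}, \d_{N'})$ by elementarity. So $h^{p^*}_\b\restr\d_N\sub N$.

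The one point requiring care — the crux of the argument — is the order-type inequality in clause (3)(b). The only elements of $\dom(h^{p^*}_\b)$ are the heights $\d_N$ with $\b\in N$, and for such $\n=\d_N$ we have $h^{p^*}_\b(\d_N)=\d_N\in\dom(f^p)$ with $f^p(\d_N)\geq\ot(N\cap\o_2)$, by clause (5)(b) for $p$. It thus remains only to verify $\ot(e_\b``\d_N)<f^p(\d_N)$. Here I would use that $\b\in N$ forces $e_\b\in N$ (as $\vec e\in N$), so that $e_\b``\d_N\sub N\cap\b$, the point being that each $\x<\d_N$ lies in $N$ and hence $e_\b(\x)\in N\cap\b$. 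Crucially, $N\cap\b$ is a \emph{proper} initial segment of $N\cap\o_2$, since $\b$ itself belongs to $N\cap\o_2$ but not to $N\cap\b$; therefore $\ot(e_\b``\d_N)\leq\ot(N\cap\b)<\ot(N\cap\o_2)\leq f^p(\d_N)$. In other words, the strict inequality comes for free precisely because the newly added ordinal $\b$ is a member of $N$ witnessing that $N\cap\b$ is properly contained in $N\cap\o_2$. With this inequality established, $p^*$ satisfies every clause in the definition of $\mtcl R$, completing the proof.
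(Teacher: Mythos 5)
Your construction is identical to the paper's: adjoin $\b$ to the support with $h^{p^*}_\b=\{(\d_N,\d_N)\,:\,N\in\mtcl N_p,\,\b\in N\}$, and the crucial verification is the same chain $\ot(e_\b``\d_N)\leq\ot(N\cap\b)<\ot(N\cap\o_2)\leq f^p(\d_N)$, using clause (5)(b) for $p$. The proof is correct and matches the paper's argument, just with the routine clauses spelled out in more detail.
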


\begin{proof}
We may of course assume that $\b\notin X_p$. It then suffices to set $$p^*=(f^p, (h^p_\a\,:\, \a\in X_p)\cup\{(\b, \{(\d_N, \d_N)\,:\,N\in\mtcl N_p,\,\b\in N\})\}, \mtcl N_p)$$ To see that this is a condition in $\mtcl R$ it is enough to notice that if $N\in\mtcl N_p$ is such that $\b\in N$, then $\d_N\in \dom(f^p)$ and $$\ot(e_\b``\d_N)=\ot(N\cap \b)<\ot(N\cap\o_2)\leq f^p(\d_N)$$ 
%and that, since $\mtcl N_p$ is stratified, $\ot(N\cap\o_2)<\d_{N'}$ for every $N'\in\mtcl N_p$ such that $\d_N<\d_{N'}$.
\end{proof}

Lemmas \ref{d1}, \ref{d2} and \ref{d3} are obvious.

\begin{lemma}\label{d1} For every $p\in\mtcl R$, $\a\in X_p$ and $\n<\o_1$ there is some $p^*\in\mtcl R$ extending $p$ and such that $\n\in \dom(h^{p^*}_\a)$.
\end{lemma}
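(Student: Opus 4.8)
The plan is to extend $p$ by adjoining a single pair $(\n,\mu)$ to $h^p_\a$, while leaving $X_p$ and $\mtcl N_p$ untouched and enlarging $f^p$ by at most one pair. We may assume $\n\notin\dom(h^p_\a)$. Using clause (3)(a), fix a continuous strictly increasing $h':\o_1\into\o_1$ extending $h^p_\a$ and put $\mu=h'(\n)$; this will guarantee at once that $h^{p^*}_\a=h^p_\a\cup\{(\n,\mu)\}$ is again extendible to a continuous strictly increasing function, so that clause (3)(a) holds for $p^*$. The candidate condition is $p^*=(f^{p^*},(h^{p^*}_\b\,:\,\b\in X_p),\mtcl N_p)$, where $h^{p^*}_\b=h^p_\b$ for $\b\neq\a$, and $f^{p^*}=f^p\cup\{(\mu,\z)\}$ for a suitable $\z$ in case $\mu\notin\dom(f^p)$ (and $f^{p^*}=f^p$ otherwise). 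Let $\x^+=\min(\dom(h^p_\a)\setminus(\n+1))$ and $\eta^+=h^p_\a(\x^+)$ when $\x^+$ exists; note $\mu<\eta^+$ since $h'$ is strictly increasing and $\n<\x^+$.

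Before choosing the value $\z$, I would first check that the model clauses for $h$ survive. For every $N\in\mtcl N_p$ with $\a\in N$ we have $\d_N\in\dom(h^p_\a)$ and $h^p_\a(\d_N)=\d_N$ by clause (5)(c); since $h'$ is strictly increasing, $\mu=h'(\n)<\d_N$ exactly when $\n<\d_N$ and $\mu>\d_N$ when $\n>\d_N$. Hence the new pair $(\n,\mu)$ belongs to $N$ precisely when $\n<\d_N$, i.e.\ $h^{p^*}_\a\restr\d_N\sub N$, so clause (5)(c)(i) is preserved; clauses (5)(b), (5)(c)(ii) and (5)(c)(iii), which refer only to heights already in the domain, are untouched. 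I would also choose $\mu$ inside the gap left free by $h'$ so as to avoid the finitely many model heights and the finite set $\dom(f^p)$, and (when $\x^+$ exists) above every $\d_N<\eta^+$; if $\x^+$ does not exist one simply takes $\mu$ larger than $\ssup(\range(h^p_\a))$ and than every $\d_N$, so that no model height exceeds $\mu$ and clause (5)(a) imposes no upper bound on $\z$, which may then be taken to be $\ot(e_\a``\mu)+1$.

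The crux is the existence of $\z$ with $\ot(e_\a``\mu)<\z<\d_N$ for every $N\in\mtcl N_p$ with $\mu<\d_N$; equivalently, writing $\d_0$ for the least model height exceeding $\mu$ (if any), one must show $\ot(e_\a``\mu)<\d_0$. Two mechanisms are available, and the positioning of $\mu$ ensures one of them fires. If $\d_0=\d_N$ for some $N\in\mtcl N_p$ with $\a\in N$, then, since $\mu<\d_N$ and $\a,\mu,\vec e\in N$, elementarity gives $e_\a``\mu\in N$, whence $\ot(e_\a``\mu)<\d_N=\d_0$. Otherwise, having placed $\mu$ above every model height below $\eta^+$, we have $\eta^+\leq\d_0$; when in fact $\eta^+<\d_0$, the inclusion $e_\a``\mu\sub e_\a``\eta^+$ (from $\mu<\eta^+$) together with clause (3)(b) for the domain point $\x^+$ yields $\ot(e_\a``\mu)\leq\ot(e_\a``\eta^+)<f^p(\eta^+)$, while clause (5)(a) for the model of height $\d_0$ yields $f^p(\eta^+)<\d_0$ since $\eta^+<\d_0$; hence $\ot(e_\a``\mu)<\d_0$. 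The same estimates show that in the degenerate case $\mu\in\dom(f^p)$ the existing value already satisfies $\ot(e_\a``\mu)<f^p(\mu)$, so no change to $f^p$ is needed. Once $\z$ is chosen in $(\ot(e_\a``\mu),\d_0)$, clause (3)(b) for the pair $(\n,\mu)$ and clause (5)(a) for the pair $(\mu,\z)$ both hold, and all remaining clauses are immediate, so $p^*\in\mtcl R$ extends $p$ with $\n\in\dom(h^{p^*}_\a)$.

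The main obstacle is precisely making the interval $(\ot(e_\a``\mu),\d_0)$ nonempty, i.e.\ arranging $\mu$ so that one of the two mechanisms applies. The delicate point is the boundary case $\eta^+=\d_0$ with $\d_0$ \emph{not} the height of any model containing $\a$, which neither mechanism covers directly; this is avoided by the way $\mtcl R$-conditions are built up, namely by keeping the values taken by the functions $h_\b$ at their non-height domain points off the finitely many model heights (the heights of newly added models being chosen accordingly), so that $\eta^+$ is never itself a model height and therefore $\eta^+=\d_0$ cannot occur with $\x^+$ a density point.
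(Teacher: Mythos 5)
The paper offers no proof of this lemma (it is declared obvious together with Lemmas \ref{d2} and \ref{d3}), so there is no argument of the authors' to compare yours against. On its own terms, your proposal correctly isolates the one nontrivial point, namely producing $\z$ with $\ot(e_\a``\mu)<\z<\d_0$, where $\d_0$ is the least height of a model of $\mtcl N_p$ above $\mu$; and your two mechanisms (elementarity when some model of height $\d_0$ contains $\a$; the chain $\ot(e_\a``\mu)\le\ot(e_\a``\eta^+)<f^p(\eta^+)<\d_0$ when $\eta^+<\d_0$) are both sound.

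The way you dispose of the remaining case, however, is a genuine gap. A density lemma must be proved for an \emph{arbitrary} condition satisfying the definition of $\mtcl R$, and nothing in that definition forbids $h^p_\a(\x^+)=\d_N$ for some $N\in\mtcl N_p$ with $\a\notin N$: clause (5)(c) is vacuous for such $N$, clause (5)(a) constrains $f^p$ only strictly below $\d_N$, and (3)(b) together with (5)(b) is satisfiable because $f^p(\d_N)$ is allowed to exceed $\d_N$. So an appeal to ``the way $\mtcl R$-conditions are built up'' is not available; at best it would give density below a subfamily of conditions. Worse, the uncovered case is not a removable technicality: take $\mtcl N_p=\{N\}$, $h^p_\a=\{(0,0),(\o\cdot 2,\d_N)\}$ and $\n=\o$. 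Any admissible $\mu=h'(\o)$ is a limit ordinal with $\o\le\mu<\d_N$, so $e_\a``\mu\supseteq e_\a``\o$; since the paper assumes nothing about $\vec e$ beyond surjectivity, one can arrange $\ot(e_\a``\o)\ge\d_N$ (which automatically puts $\a$ outside $N$, and one checks $p$ is still a condition), and then $\ot(e_\a``\mu)\ge\d_N$ while clauses (3)(b) and (5)(a), applied to any extension retaining $N$ in its model side as your construction does, force $\ot(e_\a``\mu)<f^{p^*}(\mu)<\d_N$. A secondary unjustified step is the claim that $\mu$ can always be placed above every model height below $\eta^+$ compatibly with extendibility in (3)(a): when $\x^+-\n$ is infinite the least admissible $\mu$ is $\eta^-+(\n-\x^-)$, which may lie below such a height. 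The upshot is that the lemma is not obvious as stated; closing your argument requires either an additional clause in the definition of $\mtcl R$ controlling the values of $h^p_\a$ at domain points that are not heights of models containing $\a$, or a restriction on $\vec e$.
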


\begin{lemma}\label{d2} For every $p\in\mtcl R$, $\a\in X_p$, every nonzero limit ordinal $\d\in \dom(h^p_\a)$, and every $\eta<h^p_\a(\d)$ there is a condition $p^*\in\mtcl R$ extending $p$ together with some $\n\in \dom(h^{p^*}_\a)\cap\d$ such that $h^{p^*}_\a(\nu)>\eta$.   
\end{lemma}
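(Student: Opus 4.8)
The plan is to extend $p$ by adjoining a single new point to $h^p_\a$ together with one matching point to $f^p$, leaving $\mtcl N_p$ untouched. First fix a continuous strictly increasing $h:\o_1\into\o_1$ extending $h^p_\a$. Since $\d$ is a nonzero limit ordinal lying in $\dom(h^p_\a)$ and $h$ is continuous, $h^p_\a(\d)=h(\d)=\ssup\{h(\x)\,:\,\x<\d\}$; as $\eta<h^p_\a(\d)$ this lets me pick $\n<\d$ with $h(\n)>\eta$, and I write $\g=h(\n)$, so that $\eta<\g<h^p_\a(\d)$ (if some point of $\dom(h^p_\a)\cap\d$ already has value $>\eta$ I simply take $p^*=p$). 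I will moreover choose $\n$ large enough below $\d$, and $\g\notin\dom(f^p)$, so that $\g$ exceeds $\d_N$ for every $N\in\mtcl N_p$ with $\a\notin N$ and $\d_N<h^p_\a(\d)$; this is possible because there are only finitely many such heights and each is $<h^p_\a(\d)=\ssup\{h(\x)\,:\,\x<\d\}$.

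I then set $p^*=(f^{p^*}, (h^{p^*}_\b\,:\,\b\in X_p), \mtcl N_p)$, where $h^{p^*}_\b=h^p_\b$ for $\b\neq\a$, $h^{p^*}_\a=h^p_\a\cup\{(\n, \g)\}$, and $f^{p^*}=f^p\cup\{(\g, \b)\}$ for a suitable $\b<\o_1$. Because $\g=h(\n)$ for a continuous strictly increasing $h$ extending $h^p_\a$, the enlarged $h^{p^*}_\a$ is again extendible to such a function, so clause (3)(a) survives, while clause (3)(b) for the new point demands only that $\g\in\dom(f^{p^*})$ and $\ot(e_\a``\g)<\b$. The real work is thus clause (5): I must choose $\b$ so that $(\g,\b)\in N$, i.e.\ $\b<\d_N$, for every $N\in\mtcl N_p$ with $\g<\d_N$, and I must check that the new pair $(\n,\g)$ lands in every $N\in\mtcl N_p$ with $\a\in N$ and $\n<\d_N$. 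The latter is automatic: for such $N$ we have $h^p_\a(\d_N)=\d_N$ by clause (5)(c)(iii), so $\g=h(\n)<\d_N$ forces $\n<\d_N$ by strict monotonicity, whence $\n,\g<\d_N$ and $(\n,\g)\in N$.

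So the one point that needs verification is that $\b$ can be found with $\ot(e_\a``\g)<\b<\d^*$, where $\d^*=\min\{\d_N\,:\,N\in\mtcl N_p,\ \g<\d_N\}$; equivalently, that $\ot(e_\a``\g)<\d^*$. A model $N$ realizing $\d^*$ either contains $\a$—and then $e_\a,\g\in N$, so $\ot(e_\a``\g)\in N\cap\o_1=\d_N$—or omits $\a$, in which case the choice of $\g$ forces $\d_N\geq h^p_\a(\d)$, so that (when $h^p_\a(\d)<\d_N$) the pair $(h^p_\a(\d), f^p(h^p_\a(\d)))$ lies in $N$ by clause (5)(a), and from $\g<h^p_\a(\d)$, the monotonicity of $\x\mapsto\ot(e_\a``\x)$, and clause (3)(b) at $h^p_\a(\d)$ one gets $\ot(e_\a``\g)\leq\ot(e_\a``h^p_\a(\d))<f^p(h^p_\a(\d))<\d_N$. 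This bookkeeping—placing the single new value of $f$ in the gap cut out simultaneously by all the model heights above $\g$, and in particular controlling $\ot(e_\a``\g)$ below those heights via closure of the models under $e_\a$—is the only delicate part and the main obstacle; everything else is immediate from the definitions, and $p^*\leq p$ holds by construction.
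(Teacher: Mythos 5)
The paper gives no proof of this lemma (it is declared obvious alongside Lemmas \ref{d1} and \ref{d3}), so there is no official argument to compare against; your strategy --- adjoin one new pair $(\n,\g)$ to $h^p_\a$ with $\g=h(\n)$ for a fixed normal extension $h$, adjoin one matching pair $(\g,\b)$ to $f^p$, and keep $\mtcl N_p$ --- is certainly the natural one, and you have correctly isolated the only delicate point: one must have $\ot(e_\a``\g)<\d_N$ for every $N\in\mtcl N_p$ with $\g<\d_N$, so that $\b$ can be chosen in the interval $(\ot(e_\a``\g),\d_N)$ as clauses (3)(b) and (5)(a) jointly demand. But your case analysis for this point has a real hole. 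For $N$ omitting $\a$ you arrange $\g>\d_N$ whenever $\d_N<h^p_\a(\d)$, and you handle $\d_N>h^p_\a(\d)$ via $\ot(e_\a``\g)\leq\ot(e_\a``h^p_\a(\d))<f^p(h^p_\a(\d))<\d_N$; the remaining case $\d_N=h^p_\a(\d)$ with $\a\notin N$ --- which your parenthetical ``when $h^p_\a(\d)<\d_N$'' silently excludes --- is not covered, and nothing in the definition of $\mtcl R$ forbids it. There the only available bound is $\ot(e_\a``\g)<f^p(\d_N)$, and clause (5)(b) forces $f^p(\d_N)\geq\ot(N\cap\o_2)$, which is in general far above $\d_N$, so no bound below $\d_N$ results.

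This is not removable bookkeeping. Since $e_\a$ is merely an arbitrary surjection of $\o_1$ onto $\a$, the nondecreasing map $\g\mapsto\ot(e_\a``\g)$ may already exceed $\d_N$ at $\g=\o+1$. For such a $\vec e$, taking $N$ with $\a\notin N$, $\mtcl N_p=\{N\}$, $h^p_\a=\{(\o,\d_N)\}$, $f^p=\{(\d_N,\m)\}$ with $\m>\max(\ot(e_\a``\d_N),\ot(N\cap\o_2))$, $\d=\o$ and $\eta=\o$, one obtains a legitimate condition for which every admissible value $\g=h^{p^*}_\a(n)$ lies in $(\o,\d_N)$ and satisfies $\ot(e_\a``\g)\geq\d_N$, so that (3)(b) and (5)(a) cannot be met simultaneously by any extension retaining $N$. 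Thus either one exploits the fact that, as printed, the extension relation for $\mtcl R$ does not require $\mtcl N_{p_0}\sub\mtcl N_{p_1}$ (which would trivialize the side conditions and is surely unintended), or one needs an additional constraint ruling this configuration out --- for instance that every nonzero limit $\d\in\dom(h^p_\a)$ has $h^p_\a(\d)=\d_M$ for some $M\in\mtcl N_p$ with $\a\in M$, as is the case for all conditions produced in Lemmas \ref{d0} and \ref{strat-wcc}; then your Case 1 argument applied to such an $M$, which has the same height as the offending $N$, closes the gap. As written, your proof does not go through in this case.
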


\begin{lemma}\label{d3} For every $p\in\mtcl R$ and every $\n<\o_1$ there is a condition $p^*\in\mtcl R$ extending $p$ and such that $\n\in \dom(f^{p^*})$. \end{lemma}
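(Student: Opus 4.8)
The plan is to obtain $p^*$ by simply adjoining the single point $\n$ to $\dom(f^p)$, leaving $X_p$, the functions $h^p_\a$, and $\mtcl N_p$ untouched, and to check that a suitable value can be assigned at $\n$. First I would assume $\n\notin\dom(f^p)$, as otherwise $p$ itself works. I would then record the key observation that $\n\neq\d_N$ for every $N\in\mtcl N_p$: by clause (5)(b) in the definition of an $\mtcl R$-condition we have $\d_N\in\dom(f^p)$, while $\n\notin\dom(f^p)$.

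Next I would set $p^*=(f^p\cup\{(\n,0)\},(h^p_\a\,:\,\a\in X_p),\mtcl N_p)$ and verify that this is a condition. Clauses (1)--(4) are immediate, as only $f$ has changed, and only by one pair. Clause (3)(b) is unaffected, since each relevant value $h^p_\a(\n')$ already lies in $\dom(f^p)$, on which $f^{p^*}$ agrees with $f^p$; note that $\n$ cannot be any such $h^p_\a(\n')$, as these belong to $\dom(f^p)$. Clauses (5)(b) and (5)(c) are inherited from $p$ because $X_{p^*}=X_p$ and, using $\n\neq\d_N$, $f^{p^*}(\d_N)=f^p(\d_N)$ for every $N\in\mtcl N_p$.

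The only clause that needs a moment's thought is (5)(a), which requires $f^{p^*}\restr\d_N\sub N$ for each $N\in\mtcl N_p$. Since $f^p\restr\d_N\sub N$ already holds, the only new pair that can matter is $(\n,0)$, and it lies in $f^{p^*}\restr\d_N$ precisely when $\n<\d_N$, i.e.\ when $\n\in N$. In that case $(\n,0)\in N$, since both $\n$ and $0$ are in $N$ and $N$ is closed under pairing. Hence $p^*$ is a condition extending $p$ with $\n\in\dom(f^{p^*})$, as required. I do not expect any genuine obstacle: the only point to be careful about is choosing the value $f^{p^*}(\n)$ inside every model $N$ for which $\n<\d_N$, and the choice $0$ achieves this automatically; in fact any ordinal below $\min\{\d_N\,:\,N\in\mtcl N_p,\ \n<\d_N\}$ would serve equally well.
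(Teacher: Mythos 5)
Your proof is correct and takes the same (direct) route the paper intends: the paper simply declares this lemma obvious and records no argument, and your verification — adjoin $(\n,0)$ to $f^p$, note that $\n\neq\d_N$ for every $N\in\mtcl N_p$ since each $\d_N$ already lies in $\dom(f^p)$, and check clause (5)(a) using $\n,0\in N$ whenever $\n<\d_N$ — is exactly the routine check being left to the reader.
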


It follows from Lemmas \ref{d0}--\ref{d3} together that if $G$ is $\mtcl R$-generic and we set $$f^G=\bigcup\{f^p\,:\, p\in G\}$$ and $$C^G_\a=\bigcup\{\range(h^p_\a)\,:\,p\in G,\,\a\in X_p\}$$ for each nonzero $\a<\o_2$, then $f^G:\o_1^V\into \o_1^V$ is a function, each $C^G_\a$ is a club of $\o_1^V$, and $\ot(e_\a``\n)<f^G(\n)$ for each $\a$ and $\n\in C^G_\a$. Hence, if we can show that $\mtcl R$ has the $\al_{1.5}$-c.c. with respect to finite stratified  families of models, an application of $\MA^{1.5}_{\al_2}(\mbox{stratified})$ to $\mtcl R$ will show that $\MA^{1.5}_{\al_2}(\mbox{stratified})$ implies $\lnot\wCC$.

\begin{lemma}\label{strat-wcc}
$\mtcl R$ has the $\al_{1.5}$-c.c.\ with respect to finite stratified families of models.
\end{lemma}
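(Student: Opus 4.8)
The plan is to follow closely the proof of Lemma \ref{aleph15}. Fix a cardinal $\t$ with $\mtcl R\in H(\t)$ and a finite stratified collection $\mtcl N^*$ of countable elementary submodels of $H(\t)$ containing $\vec e$; as there, I would first reduce to the case where each $N^*\in\mtcl N^*$ is the union of a continuous $\in$-chain of countable elementary submodels of $H(\t)$ containing $\vec e$. Set $\mtcl N=\{N^*\cap H(\o_2)\,:\,N^*\in\mtcl N^*\}$, let $N_0\in\mtcl N$ be of minimal height, and let $p_0\in N_0\cap\mtcl R$. I would then define the candidate generic condition $p^*=(f^{p^*}, (h^{p^*}_\a\,:\,\a\in X_{p_0}), \mtcl N_{p_0}\cup\mtcl N)$ by extending $f^{p_0}$ and each $h^{p_0}_\a$ as follows. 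For each height $\d$ occurring in $\mtcl N$ I would set $f^{p^*}(\d)=v_\d$ for some $v_\d$ in the interval $[\,\max\{\ot(N\cap\o_2)\,:\,N\in\mtcl N,\ \d_N=\d\},\ \min\{\d_{N'}\,:\,N'\in\mtcl N,\ \d_{N'}>\d\}\,)$, and for each $\a\in X_{p_0}$ I would add to $h^{p_0}_\a$ the diagonal pairs $(\d_N, \d_N)$ for all $N\in\mtcl N$ with $\a\in N$.

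Verifying that $p^*\in\mtcl R$ extends $p_0$ is the content of the first block of the argument. The family $\mtcl N_{p_0}\cup\mtcl N$ is stratified because $\mtcl N_{p_0}\sub N_0$, so $\ot(M\cap\o_2)<\d_{N_0}\leq\d_N$ for every $M\in\mtcl N_{p_0}$ and $N\in\mtcl N$; the interval defining $v_\d$ is nonempty precisely by stratification of $\mtcl N$, and the value $v_\d$, lying below the next height, automatically belongs to every model of $\mtcl N$ of larger height, securing clause (5a). The key clause $\ot(e_\a`` h^{p^*}_\a(\n))<f^{p^*}(h^{p^*}_\a(\n))$ holds for the new diagonal pairs since $\ot(e_\a``\d_N)=\ot(N\cap\a)<\ot(N\cap\o_2)\leq v_{\d_N}=f^{p^*}(\d_N)$, the strict inequality coming from $\a<\ssup(N\cap\o_2)$. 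Each $h^{p^*}_\a$ is still extendable to a continuous strictly increasing function because the added pairs are diagonal at limit ordinals and sit above $\dom(h^{p_0}_\a)\sub\d_{N_0}$, and the remaining clauses are routine.

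The heart of the matter, as in Lemma \ref{aleph15}, is to show that $p^*$ is $(N^*, \mtcl R)$-generic for each $N^*\in\mtcl N^*$. Given an open dense $D\in N^*$ and $p\leq p^*$ with $p\in D$, I would use the continuous chain $N^*=\bigcup_{\n<\d_{N^*}}N^*_\n$ to choose $\n_0$ so that, writing $N^+=N^*_{\n_0}$, the set $(f^p\cup\bigcup_{\a\in X_p}h^p_\a)\cap N^*$ is contained in $N^+$; in particular $f^p\restr\d_{N^+}\sub N^+$ and $h^p_\a\restr\d_{N^+}\sub N^+$ for $\a\in X_p\cap N^+$. Writing $\Delta=\{\d_N\,:\,N\in\mtcl N_p,\ \d_N<\d_{N^+}\}$, a finite subset of $N^+$, I would then, by a reflection argument exactly as in Lemma \ref{aleph15}, find in $N^+$ a condition $r\in D$ together with an isomorphism $\pi:\mtbb M(p)\into\mtbb M(r)$ which is the identity on the common part and such that $X_p\cap N^+\sub X_r$, $\mtcl N_p\cup\mtcl N_r$ is stratified, and—this is the extra ingredient—$r$ \emph{respects $\Delta$}, meaning that $f^r(\g)<\d$ and $h^r_\a(\g)<\d$ whenever $\d\in\Delta$, $\g<\d$, and $\a\in X_r$ lies in the $\mtcl N_p$-model of height $\d$, while every height in $\mtcl N_r$ is a closure point of $f^p\restr\d_{N^+}$ and of each $h^p_\a\restr\d_{N^+}$. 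That such an $r$ exists in $N^+$ is witnessed by $p$ itself: all these clauses are true of $p$, the respect-for-$\Delta$ and closure clauses holding because $f^p\restr\d_N\sub N$ and $h^p_\a\restr\d_N\sub N$ (for $\a\in N$) force the relevant values below $\d_N$.

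Finally I would amalgamate $p$ and $r$ into $p'=(f^p\cup f^r, (h^p_\a\cup h^r_\a\cup\{(\d_N,\d_N)\})_\a, \mtcl N_p\cup\mtcl N_r)$, where for each $\a\in X_p\cup X_r$ and $N\in\mtcl N_p\cup\mtcl N_r$ with $\a\in N$ the diagonal pair $(\d_N,\d_N)$ is thrown into $h_\a$. Checking that $p'\in\mtcl R$ is the main obstacle, and the place where the separation properties are used. That $f^p\cup f^r$ is a function follows since $\pi$ fixes the overlap (which lies below $\d_{N^+}$). The clause $f^{p'}\restr\d_N\sub N$ for $N\in\mtcl N_p$ with $\d_N<\d_{N^+}$ is exactly where respect-for-$\Delta$ is needed, keeping $f^r$ and the $h^r_\a$ below $\d_N$ there; for $\d_N\geq\d_{N^+}$ it is automatic because all of $f^r$ lies below $\d_{N^+}\leq\d_N$, hence inside $N$; and the symmetric clause for $N\in\mtcl N_r$ uses that $\mtcl N_r$-heights are closure points of $f^p\restr\d_{N^+}$. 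The domination clause survives for every diagonal pair $(\d_N,\d_N)$ since $f^{p'}(\d_N)\geq\ot(N\cap\o_2)>\ot(N\cap\a)=\ot(e_\a``\d_N)$, and monotonicity of the clubs is preserved because all added pairs are diagonal at limit heights. As $p'$ extends both $p$ and $r$, this shows $p^*$ is $(N^*, \mtcl R)$-generic and completes the proof. The intricacy lies entirely in this amalgamation, though it is somewhat lighter than in Lemma \ref{aleph15}, since the clubs are pinned to the identity at model heights and there is no coherence requirement to maintain.
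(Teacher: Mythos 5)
Your proposal follows the paper's proof essentially step for step: the same candidate condition $p^*$ (the paper takes $v_\d=\m(\d):=\max\{\ot(N\cap\o_2)\,:\,N\in\mtcl N,\ \d_N=\d\}$, which is the left endpoint of your interval), the same diagonal pairs $(\d_N,\d_N)$, and the same reflect-inside-$N^+$-and-amalgamate argument for genericity. The one point where your write-up diverges is the ``respects $\Delta$'' clause: as stated it is conditioned on which $\a\in X_r$ lie in a model $N\in\mtcl N_p$ of height $\d\in\Delta$, and for $\a\in X_r\setminus X_p$ this is not determined by parameters available in $N^+$ --- under mere stratification the low models of $\mtcl N_p$, and even their traces on $N^+$, need not be elements of $N^+$ --- so that clause cannot be secured by elementarity of $N^+$ alone. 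The paper instead arranges, via the $\eta$/$\eta_\a$ device of Lemma \ref{aleph15} that you invoke, that every $\a\in X_r\setminus X_p$ lies in \emph{no} $N\in\mtcl N_p$ of height below $\d_{N^*}$; with that, your respect-for-$\Delta$ requirement becomes vacuous for new ordinals and is expressible in $N^+$ (hence witnessed by $p$) for $\a\in X_p\cap X_r$, and the rest of your amalgamation, including the domination and extendability checks, goes through as you describe.
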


\begin{proof}
Let $\t$ be a large enough cardinal, let $\mtcl N^*$ be a finite stratified family of countable elementary submodels of $H(\t)$ containing $\vec e$ and $\vec f$, and let $p_0\in\mtcl R\cap N^*_0$, where $N^*_0$ is of minimum height within $\mtcl N^*$. We will prove that there is a condition $p^*\in\mtcl R$ stronger than $p_0$ such that $p^*$ is $(N^*,  \mtcl R)$-generic for each $N^*\in\mtcl N^*$. 

Let $\mtcl N=\{N^*\cap H(\o_2)\,:\,N^*\in\mtcl N^*\}$ and for every $\d\in\{\d_N\,:\,N\in\mtcl N\}$ let $$\m(\d)=\max\{\ot(N\cap\o_2)\,:\, N\in\mtcl N,\,\d_N=\d\}$$ Let $$p^*=(f^{p_0}\cup\{(\d_N, \m(\d_N))\,:\,N\in\mtcl N\}, (h^{p^*}_\a\,:\,\a\in X_{p_0}), \mtcl N_{p_0}\cup\mtcl N),$$ where $$h^{p^*}_\a=h^{p_0}_\a\cup\{(\d_N, \d_N)\,:\,N\in\mtcl N,\,\a\in N\}$$ for each $\a\in X_{p_0}$. It is easy to check that $p^*$ is a condition in $\mtcl R$ (for this it is enough to notice that $\m(\d_{N_0})<\d_{N_1}$ holds for all $N_0$, $N_1\in\mtcl N$ with $\d_{N_0}<\d_{N_1}$, which follows from the stratification of $\mtcl N$), and it of course extends $p_0$ by construction. Hence, it will be enough to show that $p^*$ is $(N^*, \mtcl R)$-generic for every $N^*\in\mtcl N^*$. Let $D\in N^*$ be a dense and open subset of $\mtcl R$ and let $p$ be an extension of $p^*$ in $D$. We will show that there is a condition in $D\cap N^*$ compatible with $p$. 

As in the proof of Lemma \ref{aleph15}, we may assume that $N^*=\bigcup_{\n<\d_{N^*}}N^*_\n$, where $(N^*_\n)_{\n<\d_{N^*}}$ is a $\sub$-continuous $\in$-chain of models.
%\footnote{Alternatively, in what follows we could instead assume that $N^*\elsub (H(\o_2); \in, \vec{\mtcl C})$ for some $\Box_{\o_1, \o_1}$-sequence $\vec{\mtcl C}$, which we know exists by Theorem \ref{thm1}, and argue similarly as in the proof of Lemma \ref{Q-proper} (jn the proof of Theorem \ref{mainthm}).} 
By moving to a suitable $N^*_{\n_0}$ and arguing there as in the proof of Lemma \ref{aleph15} using the stratification of $\mtcl N_p$, we may find a condition $r\in D\cap N^*_{\n_0}$ such that 
\begin{enumerate}
\item for every $\a\in X_p\cap X_r$, $h^p_\a\cup h^r_\a$ can be extended to a strictly increasing and continuous function $h:\o_1\into \o_1$,
\item for every $\a\in X_r\cap X_q$ and every $N\in\mtcl N_p$ such that $\d_N<\d_{N^*}$, $\a\notin N$, and 
\item $\mtcl N_p\cup\mtcl N_r$ is stratified.
\end{enumerate}
Let now $$p'=(f^r\cup (f^p\restriction (\o_1\setminus\d_{N^*})), (h^{p'}_\a\,:\,\a\in X_p\cup X_r), \mtcl N_p\cup\mtcl N_r),$$ where
\begin{enumerate}
\item for every $\a\in X_p\cap X_r$, $h^{p'}_\a=h^p_\a\cup h^r_\a$,
\item for every $\a\in X_p\setminus X_r$, $h^{p'}_\a=h^p_\a$, and
\item for every $\a\in X_r\setminus X_p$, $h^{p'}_\a=h^r_\a\cup\{(\d_N, \d_N)\,:\, N\in\mtcl N_p,\,\a\in N\}$.
\end{enumerate}
Then $p'$ is a condition in $\mtcl R$, which finishes the proof of the lemma since $p'$ is of course stronger than both $p$ and $r$. 
\end{proof}

The above lemma concludes the proof of Theorem \ref{wcc}. 

The following corollary is an immediate by-product of the proof of Theorem \ref{wcc}.

\begin{corollary}
$\ZFC$ proves that there is a poset $\mtcl P$ such that
\begin{enumerate}
\item $\mtcl P$ is proper,
\item $\mtcl P$ has the $\al_2$-c.c., and
\item $\mtcl P$ forces $\lnot\wCC$.\end{enumerate}
\end{corollary}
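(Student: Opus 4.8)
The plan is to simply take $\mtcl P$ to be the forcing notion $\mtcl R$ built in the proof of Theorem \ref{wcc}, and to read off the three required properties from facts already established. For clauses (1) and (2) I would invoke Lemma \ref{strat-wcc}, which gives that $\mtcl R$ has the $\al_{1.5}$-c.c.\ with respect to finite stratified families of models, together with Proposition \ref{al15-2}, which turns this into properness and the $\al_2$-chain condition. In particular this guarantees that $\mtcl R$ preserves both $\o_1$ and $\o_2$, a point I will need below.

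For clause (3), I would fix an $\mtcl R$-generic filter $G$ and consider the generic objects $f^G:\o_1\into\o_1$ and $(C^G_\a\,:\,0<\a<\o_2)$ already extracted from $G$, recalling that each $C^G_\a$ is a club of $\o_1$ satisfying $\ot(e_\a``\n)<f^G(\n)$ for all $\n\in C^G_\a$. The key step is the observation that, for each nonzero $\a<\o_2$, the map $\n\mapsto\ot(e_\a``\n)$ is precisely a canonical function $g_\a$ for $\a$, because $e_\a:\o_1\into\a$ is a surjection. Since $\mtcl R$ preserves $\o_1$ and $\o_2$, the function $e_\a$ remains a surjection of $\o_1$ onto $\a$ in $V[G]$, so $g_\a$ is still a canonical function for $\a$ there. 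By construction $f^G$ dominates $g_\a$ everywhere on the club $C^G_\a$, so $\{\n<\o_1\,:\,f^G(\n)<g_\a(\n)\}$ is disjoint from $C^G_\a$ and hence non-stationary.

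Finally I would combine this with the mod-club uniqueness of canonical functions (the Fact recorded in the introduction, to the effect that any two surjections $\o_1\into\a$ have equal images on a club): since any two canonical functions for a fixed $\a$ agree on a club, the set $\{\n\,:\,f^G(\n)<g(\n)\}$ is non-stationary for \emph{every} canonical function $g$ for $\a$, for \emph{every} nonzero $\a<\o_2$. This is exactly the statement that $f^G$ witnesses the failure of $\wCC$ in $V[G]$, giving clause (3) and completing the argument. I do not expect any genuine obstacle here; the one point that must not be glossed over is the preservation of $\o_1$ and $\o_2$, since without it the functions $g_\a$ need not remain canonical functions in the extension, and this is precisely where clauses (1) and (2) get used.
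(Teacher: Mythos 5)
Your proposal is correct and is precisely the argument the paper has in mind: the paper states this corollary as an ``immediate by-product'' of the proof of Theorem \ref{wcc}, namely taking $\mtcl P=\mtcl R$, getting properness and the $\al_2$-c.c.\ from Lemma \ref{strat-wcc} via Proposition \ref{al15-2}, and reading off $\lnot\wCC$ in the extension from the generic objects $f^G$ and $(C^G_\a)_{0<\a<\o_2}$ supplied by the density lemmas. Your explicit attention to the preservation of $\o_1$ and $\o_2$ (so that the $e_\a$ still induce canonical functions in $V[G]$) and to the mod-club uniqueness of canonical functions fills in exactly the details the paper leaves implicit.
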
 

\section{Another proof of the inconsistency of $\MM_{\al_2}(\al_2\mbox{-c.c.})$}\label{s4}

In this final section of the paper we will give another proof of Theorem \ref{mainthm} in Section \ref{s2}. Our argument essentially follows an argument, due to Shelah, showing that stationary preserving forcing cannot be iterated, and in fact that there is a forcing iteration of length $\o$ each of whose iterands is forced to preserve stationary subsets of $\o_1$ but such that any limit of it collapses $\o_1$.  

Let us assume that $\MM_{\al_2}(\al_2\mbox{-c.c.})$ holds. Hence, $\MA_{1.5}(\mbox{stratified})$ holds as well and so, by Theorem \ref{wcc}, there is a function $f:\o_1\into \o_1$ such that $\{\n<\o_1\,:\,g(\n)<f(\n)\}$ contains a club for every nonzero $\a<\o_2$ and every canonical function $g$ for $\a$. We will build a sequence $(f_n)_{n<\o}$ of functions from $\o_1$ to $\o_1$, together with clubs $C_n$ of $\o_1$, such that for every $n$ and every $\n\in C_n$, $f_{n+1}(\n)<f_n(\n)$. This of course will yield a contradiction since then, if $\n\in \bigcap_n C_n$, then $f_{n+1}(\n)<f_n(\n)$ for all $n$, which is impossible. 

We will make sure that the construction can keep going by arranging, for every $n<\o$ and every nonzero $\a<\o_2$, that $f_n$ dominates every canonical function for $\a$ on a club. We start our construction by letting $f_0=f$.  

Given $n<\o$ and assuming $f_n$ has been constructed, we will find $f_{n+1}$ by an application of $\MM_{\al_2}(\al_2\mbox{-c.c.})$ to the following slight variant $\mtcl R_{f_n}$ of the poset $\mtcl R$ in the proof of Theorem \ref{wcc}.

Let $\mtcl K^{f_n}$ be the collection of countable $N\elsub (H(\o_2); \in, \vec e, f_n)$ such that $\ot(N\cap\o_2)<f_n(\d_N)$. 

A condition in $\mtcl R_{f_n}$ is a tuple $p=(f^p, d_p, (h^p_\a\,:\,\a\in X_p), \mtcl N_p)$ with the following properties.

\begin{enumerate}
\item $f^p\sub\o_1\times\o_1$ is a finite function.
\item $d_p\sub\o_1\times\o_1$ is a finite function which can be extended to a continuous strictly increasing function $d:\o_1\into\o_1$.
\item For every $\n\in\dom(d_p)$, $f^p(d_p(\n))<f_n(d_p(\n))$.
\item $X_p\in [\o_2\setminus\{0\}]^{{<}\o}$
\item For each $\a\in X_p$, 
\begin{enumerate}
\item $h^p_\a\sub\o_1\times\o_1$ is a finite function which can be extended to a continuous strictly increasing function $h:\o_1\into\o_1$, and
\item for each $\n\in \dom(h^p_\a)$ we have that $h^p_\a(\n)\in \dom(f^p)$ and $\ot(e_\a``h^p_\a(\n))<f^p(h^p_\a(\n))$.
\end{enumerate}
\item $\mtcl N_p$ is a finite stratified family of members of $\mtcl K^{f_n}$.
\item The following holds for each $N\in \mtcl N_p$.
\begin{enumerate}
\item $f^p\restr \d_N\sub N$;
\item $\d_N\in\dom(f^p)$ and $f^p(\d_N)\geq\ot(N\cap\o_2)$;
\item $d_p\restr\d_N\sub N$, $\d_N\in \dom(d_p)$, and $d_p(\d_N)=\d_N$.
\item for every $\a\in X_p\cap N$, 
\begin{enumerate}
\item $h^p_\a\restr\d_N\sub N$,
\item $\d_N\in \dom(h^p_\a)$, and
\item $h^p_\a(\d_N)=\d_N$.
\end{enumerate}
\end{enumerate}
\end{enumerate}

Given $\mtcl R_{f_n}$-conditions $p_0$ and $p_1$, $p_1$ extends $p_0$ iff
\begin{enumerate}
\item $f^{p_0}\sub f^{p_1}$,
\item $d_{p_0}\sub d_{p_1}$, 
\item $X_{p_0}\sub X_{p_1}$, and
\item for every $\a\in X_{p_0}$, $h^{p_0}_\a\sub h^{p_1}_\a$.
\end{enumerate}

We now have the following density lemmas. Lemma \ref{d0'} is proved by the same argument as in the proof of Lemma \ref{d0} using the fact that all models of $\mtcl N_p$ are in $\mtcl K^{f_n}$, and Lemmas \ref{d1'}--\ref{d4'} are straightforward.

\begin{lemma}\label{d0'}
For every $p\in\mtcl R_{f_n}$ and every nonzero $\b<\o_2$ there is a $\mtcl R_{f_n}$-condition $p^*$ extending $p$ and such that $\b\in X_{p^*}$.
\end{lemma}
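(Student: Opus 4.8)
The final statement is Lemma \ref{d0'}, a density lemma for the poset $\mtcl R_{f_n}$ asserting that for every condition $p$ and every nonzero $\b<\o_2$ there is an extension $p^*$ with $\b\in X_{p^*}$. The plan is to mimic verbatim the proof of Lemma \ref{d0} for the original poset $\mtcl R$, since $\mtcl R_{f_n}$ differs from $\mtcl R$ only by the additional coordinate $d_p$ (which plays no role in adjoining a new index to $X_p$) and by requiring the models in $\mtcl N_p$ to belong to $\mtcl K^{f_n}$ rather than merely being countable elementary submodels of $(H(\o_2);\in,\vec e)$.

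First I would assume without loss of generality that $\b\notin X_p$, as otherwise $p$ itself works. Then I would define the extension by leaving $f^p$, $d_p$, and $\mtcl N_p$ untouched and only enlarging the family of $h$-functions, setting
$$p^*=\bigl(f^p,\, d_p,\, (h^p_\a\,:\,\a\in X_p)\cup\{(\b, \{(\d_N, \d_N)\,:\,N\in\mtcl N_p,\,\b\in N\})\},\, \mtcl N_p\bigr).$$
This is exactly the recipe from Lemma \ref{d0}: the new function $h^{p^*}_\b$ sends $\d_N$ to $\d_N$ for each $N\in\mtcl N_p$ containing $\b$, and is empty if no such $N$ exists.

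The only thing to verify is that $p^*$ is genuinely a condition in $\mtcl R_{f_n}$, and the sole non-immediate clause is 5(b), namely $\ot(e_\b``h^{p^*}_\b(\n))<f^p(h^{p^*}_\b(\n))$ for $\n\in\dom(h^{p^*}_\b)$. For $\n=\d_N$ with $N\in\mtcl N_p$ and $\b\in N$, we have $h^{p^*}_\b(\d_N)=\d_N$, so the required inequality reads $\ot(e_\b``\d_N)<f^p(\d_N)$. Here the membership $N\in\mtcl K^{f_n}$ is exactly what supplies the bound: since $\b\in N$ and $e_\b\in N$, one has $e_\b``\d_N=N\cap\b$, whence
$$\ot(e_\b``\d_N)=\ot(N\cap\b)<\ot(N\cap\o_2)<f_n(\d_N),$$
where the strict inequality $\ot(N\cap\o_2)<f_n(\d_N)$ is precisely the defining property of $\mtcl K^{f_n}$. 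Combined with clause 7(b), which gives $f^p(\d_N)\geq\ot(N\cap\o_2)$, and the fact that $\d_N\in\dom(f^p)$, this confirms that the new values respect 5(b). Since all of $f^p$, $d_p$, and $\mtcl N_p$ are carried over unchanged, every remaining clause holds trivially, and $p^*$ obviously extends $p$.

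I do not expect any real obstacle here: the point of stating this as Lemma \ref{d0'} with the remark that it is ``proved by the same argument as in the proof of Lemma \ref{d0} using the fact that all models of $\mtcl N_p$ are in $\mtcl K^{f_n}$'' is that the membership in $\mtcl K^{f_n}$ replaces the use of $\ot(N\cap\o_2)\leq f^p(\d_N)$ from the earlier argument, and the computation $\ot(e_\b``\d_N)=\ot(N\cap\b)<\ot(N\cap\o_2)$ goes through identically. The extra $d_p$ coordinate imposes no new constraint relative to adjoining $\b$, so it can simply be copied.
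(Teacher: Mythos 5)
Your proof is correct and takes essentially the same route as the paper: Lemma \ref{d0'} is proved there by repeating the proof of Lemma \ref{d0} verbatim, adjoining $h^{p^*}_\b=\{(\d_N, \d_N)\,:\,N\in\mtcl N_p,\,\b\in N\}$ while leaving $f^p$, $d_p$, and $\mtcl N_p$ unchanged, and checking $\ot(e_\b``\d_N)=\ot(N\cap\b)<\ot(N\cap\o_2)\leq f^p(\d_N)$. (The only cosmetic point: the decisive bound for clause 5(b) is the one against $f^p(\d_N)$ coming from clause 7(b), which you do invoke; the comparison with $f_n(\d_N)$ via $\mtcl K^{f_n}$ is what makes the condition clauses mutually consistent rather than what is used in this particular verification.)
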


\begin{lemma}\label{d1'} For every $p\in\mtcl R_{f_n}$, $\a\in X_p$ and $\n<\o_1$ there is some $p^*\in\mtcl R_{f_n}$ extending $p$ and such that $\n\in \dom(h^{p^*}_\a)$.
\end{lemma}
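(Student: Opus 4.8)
The plan is to extend $p$ in the simplest possible way: leave $d_p$, $X_p$ and $\mtcl N_p$ untouched, add a single new point to the domain of $h^p_\a$, and enlarge $f^p$ just enough to keep clause (5)(b). First I would dispose of the trivial cases: if $\n\in\dom(h^p_\a)$ we take $p^*=p$, and if $\n=\d_N$ for some $N\in\mtcl N_p$ with $\a\in N$, then $\n\in\dom(h^p_\a)$ already by clause (7)(d), so again there is nothing to do. So assume $\n\notin\dom(h^p_\a)$, put $\mu^-=\max(\dom(h^p_\a)\cap\n)$ and $\mu^+=\min(\dom(h^p_\a)\setminus(\n+1))$ when these exist, and fix, using clause (5)(a), a continuous strictly increasing $h\colon\o_1\into\o_1$ extending $h^p_\a$. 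The new point will be $(\n,\gamma)$ for a carefully chosen $\gamma$ in the interval $(h^p_\a(\mu^-),h^p_\a(\mu^+))$, with the obvious conventions when $\mu^-$ or $\mu^+$ is absent.

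Then I would set $p^*=(f^{p^*},d_p,(h^{p^*}_\b\,:\,\b\in X_p),\mtcl N_p)$ with $h^{p^*}_\b=h^p_\b$ for $\b\neq\a$, $h^{p^*}_\a=h^p_\a\cup\{(\n,\gamma)\}$, and $f^{p^*}=f^p\cup\{(\gamma,\ot(e_\a``\gamma)+1)\}$. For this to make sense I must pick $\gamma$ outside the finite set $\dom(f^p)\cup\range(d_p)$, a limit ordinal when $\n\in\Lim(\o_1)$, and so that $h^p_\a\cup\{(\n,\gamma)\}$ still extends to a continuous strictly increasing function; since the interval is infinite and $h$ already witnesses extendability, a routine interpolation delivers such a $\gamma$, which also secures clause (5)(a). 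Granting this, most clauses are immediate: (1), (2), (4) and (6) are untouched; (3) survives because $\gamma\notin\range(d_p)$, so $f^{p^*}$ agrees with $f^p$ on $\range(d_p)$; (5)(b) at the new point holds by the very choice $f^{p^*}(\gamma)>\ot(e_\a``\gamma)$; and clauses (7)(b)--(7)(d) are unaffected, using for (7)(d)(i) that $\gamma<\d_N$ whenever $\n<\d_N$ and $\a\in N$ (as then $\mu^+\leq\d_N$, whence $\gamma<h^p_\a(\mu^+)\leq h(\d_N)=\d_N$).

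The hard part --- though still routine --- will be clause (7)(a) for the one value I have added, i.e.\ showing $\ot(e_\a``\gamma)+1<\d_N$, equivalently $\ot(e_\a``\gamma)<\d_N$, for every $N\in\mtcl N_p$ with $\gamma<\d_N$. When $\a\in N$ this is immediate, since $\gamma,e_\a\in N$ force $\ot(e_\a``\gamma)\in N$ and hence $\ot(e_\a``\gamma)<\d_N$. For $N$ with $\a\notin N$ I would use the invariant, valid for $p$ itself, that $h^p_\a(\mu)<\d_N$ implies $\ot(e_\a``h^p_\a(\mu))<\d_N$ for every $\mu\in\dom(h^p_\a)$: indeed clause (5)(b) gives $\ot(e_\a``h^p_\a(\mu))<f^p(h^p_\a(\mu))$ while clause (7)(a) gives $f^p(h^p_\a(\mu))<\d_N$. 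Choosing $\gamma$ above every model height $<h^p_\a(\mu^+)$ forces $h^p_\a(\mu^+)\leq\d_N$ for every $N$ with $\gamma<\d_N$; when this inequality is strict, the invariant applied to $\mu^+$ gives $\ot(e_\a``\gamma)\leq\ot(e_\a``h^p_\a(\mu^+))<\d_N$, and the boundary case $h^p_\a(\mu^+)=\d_N$ is settled using clause (7)(d) at the $\a$-models of height $\d_N$ together with the stratification of $\mtcl N_p$ and Fact \ref{agreement}. Once (7)(a) is secured, $p^*$ is the desired extension of $p$ with $\n\in\dom(h^{p^*}_\a)$.
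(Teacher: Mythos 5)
The paper gives no proof of this lemma (it is declared straightforward together with Lemmas \ref{d2'}--\ref{d4'}), so the only question is whether your argument stands on its own. Its architecture --- add a single pair $(\n,\g)$ to $h^p_\a$ and the pair $(\g,\ot(e_\a``\g)+1)$ to $f^p$, then verify the clauses --- is the natural one, and your verifications of clauses (3), (5)(b), (7)(b)--(d), and of (7)(a) in the case $\a\in N$, are correct. But there is a genuine gap at the very first step, the choice of $\g$: the interval $(h^p_\a(\mu^-),h^p_\a(\mu^+))$ need \emph{not} be infinite, and when it is finite you cannot in general arrange $\g\notin\dom(f^p)\cup\range(d_p)$, let alone the further requirements you impose afterwards ($\g$ above all small model heights, $\ot(e_\a``\g)<\d_N$ for the relevant $N$). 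This is not merely a presentational slip, because the statement as literally written fails in exactly this configuration: take $\mtcl N_p=\emptyset$, $d_p=\emptyset$, $X_p=\{\a\}$, $h^p_\a=\{(0,5),(2,7)\}$ and $f^p=\{(5,\ot(e_\a``5)+1),\,(6,0),\,(7,\ot(e_\a``7)+1)\}$. This is a condition, but any extension $p^*$ with $1\in\dom(h^{p^*}_\a)$ must have $h^{p^*}_\a(1)=6$ by strict monotonicity and $f^{p^*}(6)=0$ since $f^p\sub f^{p^*}$, contradicting clause (5)(b) as $\ot(e_\a``6)\geq 1$. (A secondary soft spot: in your ``boundary case'' $h^p_\a(\mu^+)=\d_N$ with $\a\notin N$ there need not exist any model of height $\d_N$ containing $\a$, so the appeal to clause (7)(d) has nothing to act on.)

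The good news is that your construction does prove the statement that the paper actually needs, namely that $\dom(h^G_\a)$ --- and hence $C^G_\a=\range(h^G_\a)$ --- is unbounded in $\o_1$: given $\n$, replace it if necessary by an ordinal $\n'\geq\n$ above $\max(\dom(h^p_\a))$ of the appropriate limit/successor type. Then $\mu^+$ does not exist, $\g$ may be taken arbitrarily large --- in particular above every $\d_N$ for $N\in\mtcl N_p$, outside the finite set $\dom(f^p)\cup\range(d_p)$, and compatible with extendability --- and every one of your clause verifications, including (7)(a) and (7)(d)(i), becomes vacuous or immediate with no boundary cases. I would record the lemma in that weakened form (``for every $\n$ there are $p^*\leq p$ and $\n'\geq\n$ with $\n'\in\dom(h^{p^*}_\a)$''), or else add a hypothesis excluding $\n$ from the finite gaps of $\dom(h^p_\a)$; as stated, the lemma is false and your proof cannot be repaired at the quoted step.
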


\begin{lemma}\label{d2'} For every $p\in\mtcl R_{f_n}$, $\a\in X_p$, every nonzero limit ordinal $\d\in \dom(h^p_\a)$, and every $\eta<h^p_\a(\d)$ there is a condition $p^*\in\mtcl R_{f_n}$ extending $p$ together with some $\n\in \dom(h^{p^*}_\a)\cap\d$ such that $h^{p^*}_\a(\nu)>\eta$.   
\end{lemma}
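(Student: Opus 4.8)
The plan is to follow the proof of Lemma~\ref{d2} for $\mtcl R$, the new ingredients being the auxiliary function $d_p$ and, more seriously, the need to place the newly created value inside the relevant models. First I would fix, using clause~(5)(a), a continuous strictly increasing $h:\o_1\into\o_1$ extending $h^p_\a$. Since $\d$ is a nonzero limit ordinal, continuity gives $h^p_\a(\d)=h(\d)=\ssup\{h(\x)\,:\,\x<\d\}$, so the hypothesis $\eta<h^p_\a(\d)$ yields ordinals $\x<\d$ with $h(\x)>\eta$. I will take $\n$ to be one such $\x$ and put $\g=h(\n)$; the condition $p^*$ will have $h^{p^*}_\a=h^p_\a\cup\{(\n,\g)\}$, which remains extendable to a continuous strictly increasing function simply because it is still a subset of $h$. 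The output ordinal is then $\n\in\dom(h^{p^*}_\a)\cap\d$ with $h^{p^*}_\a(\n)=\g>\eta$.

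The clause that forces work is (5)(b): we need $\g\in\dom(f^{p^*})$ together with $\ot(e_\a``\g)<f^{p^*}(\g)$, so $f^p$ must be enlarged by a single pair $(\g,\t)$. To keep everything away from the finitely many committed points, I would choose $\n<\d$ with $h(\n)>\eta$ subject to $\g=h(\n)\notin\dom(f^p)\cup\range(d_p)$ and $\g$ strictly above every model-height $\d_N$ (for $N\in\mtcl N_p$) with $\d_N<h^p_\a(\d)$; this is possible since $\{h(\x)\,:\,\x<\d\}$ is cofinal in $h^p_\a(\d)$ and the sets to be avoided are finite. Keeping $\mtcl N_{p^*}=\mtcl N_p$, $d_{p^*}=d_p$ and $X_{p^*}=X_p$, clauses (1)--(4), (6), the $d_p$-clause~(3) and the extension relation are then immediate, because neither $d_p$ nor the models have been altered and nothing has been added to $\range(d_p)$.

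The heart of the matter is the coherence clauses (7)(a) and (7)(d) for the two new pairs $(\n,\g)$ and $(\g,\t)$: for each $N\in\mtcl N_p$ with $\g<\d_N$ we must be able to take $\t<\d_N$, since then $\n,\g,\t<\d_N=N\cap\o_1$ gives $(\n,\g),(\g,\t)\in N$ automatically. By the choice of $\g$, every such $N$ satisfies $\d_N\geq h^p_\a(\d)$. If $\a\in N$, then $\g,e_\a\in N$, hence $\ot(e_\a``\g)\in N\cap\o_1=\d_N$; if $\a\notin N$ but $h^p_\a(\d)<\d_N$, then clause~(7)(a) applied to the point $h^p_\a(\d)\in\dom(f^p)$ gives $f^p(h^p_\a(\d))<\d_N$, so by (5)(b) at $\d$ and monotonicity of $e_\a``(\cdot)$,
$\ot(e_\a``\g)\leq\ot(e_\a``h^p_\a(\d))<f^p(h^p_\a(\d))<\d_N$.
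Thus $\ot(e_\a``\g)<\d_{\min}$, where $\d_{\min}$ is the least model-height exceeding $\g$, and I would pick $\t$ with $\ot(e_\a``\g)<\t<\d_{\min}$; this $\t$ lies below $\d_N$ for every relevant $N$, so $f^{p^*}=f^p\cup\{(\g,\t)\}$ makes $p^*$ a condition extending $p$.

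I expect the single genuine obstacle to be the residual case of a model $N$ with $\a\notin N$ and $\d_N=h^p_\a(\d)$ \emph{exactly}, where the inequality above breaks down (note $\g<h^p_\a(\d)=\d_N$ is then unavoidable, and in $\mtcl R_{f_n}$ the membership $N\in\mtcl K^{f_n}$ forces $f_n(\d_N)>\d_N$, so the $f_n$-bound does not help). This case I would handle by adjoining to $\mtcl N_{p^*}$ an auxiliary model $N^*\elsub(H(\o_2);\in,\vec e,f_n)$ with $\a,\g\in N^*$ and $\g<\d_{N^*}<\d_N$, simultaneously extending $h^p_\a$ by $(\d_{N^*},\d_{N^*})$ and $f^p$, $d_p$ at $\d_{N^*}$ as required by (7); since $\a\in N^*$ this pulls $\ot(e_\a``\g)$ below $\d_{N^*}$, making $\d_{\min}=\d_{N^*}$ an $\a$-height and placing $\t$ below every relevant height. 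Verifying that this enlarged tuple still satisfies (5)(a) and all of (7) is the one place where care is needed; everything else is routine.
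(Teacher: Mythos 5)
Your main construction is the right one, and it is surely what the authors have in mind when they declare Lemmas \ref{d1'}--\ref{d4'} straightforward (the paper offers no proof): pick $\gamma=h(\nu)$ with $\nu<\delta$, $\gamma>\eta$, $\gamma$ above every $\delta_N$ with $\delta_N<h^p_\a(\d)$ and outside $\dom(f^p)\cup\range(d_p)$, and adjoin $(\gamma,\tau)$ to $f^p$ with $\ot(e_\a``\gamma)<\tau$ below every model height exceeding $\gamma$. Your two verifications that $\ot(e_\a``\gamma)$ lies below the relevant heights --- via elementarity when $\a\in N$, and via clause (7)(a) applied to $h^p_\a(\d)\in\dom(f^p)$ when $h^p_\a(\d)<\d_N$ --- are both correct.

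The residual case you isolate is, however, a genuine gap, and the auxiliary-model device does not close it. If $N^*$ is a countable elementary submodel with $\a,\gamma\in N^*$, then $\ot(e_\a``\gamma)\in N^*\cap\o_1=\d_{N^*}$; so an $N^*$ with $\d_{N^*}<\d_N$ can exist \emph{only} when the inequality $\ot(e_\a``\gamma)<\d_N$ you are trying to secure already holds, in which case you could have taken $\tau\in(\ot(e_\a``\gamma),\d_N)$ directly and $N^*$ buys nothing; and when that inequality fails for every admissible $\gamma>\eta$, no such $N^*$ exists and in fact no extension $p^*$ exists at all, since clause (5)(b) forces $f^{p^*}(\gamma)>\ot(e_\a``\gamma)\geq\d_N$ while clause (7)(a) forces $f^{p^*}(\gamma)<\d_N$. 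This situation is realized by a legitimate condition: nothing in the definition of $\vec e$ prevents a countable $\a$ with $e_\a``\o=\a$, and then $N\in\mtcl K^{f_n}$ with $\d_N\leq\a<f_n(\d_N)$, $\mtcl N_p=\{N\}$, $X_p=\{\a\}$, $h^p_\a=\{(\o,\d_N)\}$, $d_p=\{(\d_N,\d_N)\}$, $f^p=\{(\d_N,\b)\}$ with $\max(\a,\ot(N\cap\o_2))<\b<f_n(\d_N)$ is a condition for which the lemma fails at $\d=\eta=\o$, because $\ot(e_\a``\gamma)=\a\geq\d_N$ for every infinite $\gamma$ and no model containing the countable ordinal $\a$ can have height below $\a$. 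So this case has to be \emph{excluded} rather than handled --- e.g.\ by adding to the definition of $\mtcl R_{f_n}$ a clause guaranteeing that a value $h^p_\a(\x)=\d_N$ can occur only when $\a\in N$ (note that every explicit construction in this section inserts $(\d_N,\d_N)$ into $h^p_\a$ only for $N$ with $\a\in N$), or by restricting the density claim to such conditions --- and neither your write-up nor the paper's bare assertion does this. A secondary defect of the fix is that adjoining a model of non-maximal height to $\mtcl N_p$ retroactively imposes all of clause (7) at the new height on the already existing $f^p$, $d_p$ and $h^p_{\a'}$, which is far from automatic.
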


\begin{lemma}\label{d3'} For every $p\in\mtcl R_{f_n}$ and every $\n<\o_1$ there is a condition $p^*\in\mtcl R_{f_n}$ extending $p$ and such that $\n\in \dom(f^{p^*})$. \end{lemma}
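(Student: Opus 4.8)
The plan is to prove this density lemma by a single‑point extension of the first coordinate $f^p$, exactly as one proves Lemma \ref{d3}, while keeping the extra coordinate $d_p$ fixed. First I would dispose of the trivial case: if $\n\in\dom(f^p)$ then $p^*=p$ works, so I assume $\n\notin\dom(f^p)$.

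The key preliminary observation I would record is that in any $\mtcl R_{f_n}$-condition one automatically has $\range(d_p)\sub\dom(f^p)$ and $\range(h^p_\a)\sub\dom(f^p)$ for every $\a\in X_p$: this is forced by clauses (3) and (5)(b), whose very formulation presupposes that $f^p$ is defined at $d_p(\mu)$ and at each $h^p_\a(\mu)$. Hence, since $\n\notin\dom(f^p)$, the ordinal $\n$ lies outside $\range(d_p)$ and outside every $\range(h^p_\a)$, so no clause imposes any lower bound on the value we are free to assign to $f^{p^*}$ at $\n$.

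Next I would isolate the only genuine constraint, which comes from clause (7)(a): for each $N\in\mtcl N_p$ we must preserve $f^{p^*}\restr\d_N\sub N$, so whenever $\n<\d_N$ the new pair $(\n, f^{p^*}(\n))$ must land in $N$, equivalently $f^{p^*}(\n)<\d_N$ (note that $\n\in N$ already, as $\n<\d_N$). Since each $\d_N$ is a nonzero limit ordinal, the value $0$ works uniformly, as $0<\d_N$ for every $N\in\mtcl N_p$. I would therefore simply set
\[
p^*=(f^p\cup\{(\n, 0)\},\, d_p,\, (h^p_\a\,:\,\a\in X_p),\, \mtcl N_p).
\]

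Finally I would check that $p^*\in\mtcl R_{f_n}$ and extends $p$. Every clause is inherited from $p$ except for the bookkeeping around the single new point $(\n,0)$: clause (1) is fine since $\n\notin\dom(f^p)$; clauses (3) and (5)(b) are untouched because the values of $f^{p^*}$ on $\range(d_p)$ and on each $\range(h^p_\a)$ are unchanged and $\n$ is not among those ordinals; clause (7)(b) is unchanged since $\n\neq\d_N$ for all $N$ (as $\d_N\in\dom(f^p)$ while $\n\notin\dom(f^p)$); and clause (7)(a) holds by the choice $f^{p^*}(\n)=0$, as discussed. Extension is immediate from $f^p\sub f^{p^*}$ and the other three coordinates being unchanged. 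I do not expect any real obstacle here: this is a pure density step, and the only point requiring any care is the preliminary remark that $\n\notin\dom(f^p)$ keeps $\n$ out of the ranges of $d_p$ and the $h^p_\a$, which is exactly what guarantees that the constant value $0$ is admissible.
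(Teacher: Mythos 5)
Your proof is correct and is exactly the straightforward one-point extension the paper has in mind (it omits the proof, declaring Lemmas \ref{d1'}--\ref{d4'} straightforward). The key observations — that $\n\notin\dom(f^p)$ forces $\n$ out of $\range(d_p)$ and of every $\range(h^p_\a)$, and that the value $0$ is compatible with clause (7)(a) — are precisely the points that make the lemma routine.
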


\begin{lemma}\label{d4'} For every $p\in\mtcl R_{f_n}$ and every $\n<\o_1$ there is a condition $p^*\in\mtcl R_{f_n}$ extending $p$ and such that $\n\in \dom(d_{p^*})$. \end{lemma}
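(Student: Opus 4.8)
The plan is to argue exactly as for the companion statement about $f^p$ (Lemma \ref{d3'}), the only genuinely new work being the interaction of the freshly added point of $\dom(d_{p^*})$ with clauses (3) and (7). We may of course assume $\n\notin\dom(d_p)$, since otherwise $p^*=p$ already works.

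First I would fix the value to be assigned to $\n$. Since $p$ is a condition, clause (2) supplies a continuous strictly increasing $d:\o_1\into\o_1$ extending $d_p$, and I would look for a value $\mu$ realizable as $d(\n)$ for such an extension. The payoff of reading $\mu$ off a witnessing $d$ is that $\mu$ then lands automatically in the right models: for every $N\in\mtcl N_p$ with $\n<\d_N$ clause (7)(c) gives $d(\d_N)=d_p(\d_N)=\d_N$, so strict monotonicity of $d$ and $\n<\d_N$ yield $\mu=d(\n)<\d_N$, i.e.\ $\mu\in N$; moreover $\mu\neq\d_N$ for \emph{every} $N\in\mtcl N_p$ (for $\n<\d_N$ we just saw $\mu<\d_N$, and for $\d_N\leq\n$ we have $\mu=d(\n)\geq d(\d_N)=\d_N$, with equality impossible since $\n\notin\dom(d_p)\ni\d_N$). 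I would then put $d_{p^*}=d_p\cup\{(\n,\mu)\}$, leave $X_p$ and $\mtcl N_p$ unchanged, and enlarge $f^p$ to $f^{p^*}=f^p\cup\{(\mu,\g)\}$ for a suitable $\g<f_n(\mu)$ in case $\mu\notin\dom(f^p)$ (no change to $f^p$ being needed when $\mu\in\dom(f^p)$ already witnesses $f^p(\mu)<f_n(\mu)$).

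The verification that $p^*\in\mtcl R_{f_n}$ is then the routine bookkeeping already used in Lemma \ref{d0'}: $d_{p^*}$ is extendable to a continuous strictly increasing function (the chosen $d$ witnesses this); clause (3) survives because the only new element of $\range(d_{p^*})$ is $\mu$ and $f^{p^*}(\mu)=\g<f_n(\mu)$; and the model clauses (7) are preserved because the new pairs $(\n,\mu)\in d_{p^*}$ and $(\mu,\g)\in f^{p^*}$ lie in each $N\in\mtcl N_p$ for which they matter — for such $N$ one has $\mu\in N$, hence $f_n(\mu)\in N$ as $f_n\in N$, so $\g$ may be taken below $f_n(\mu)<\d_N$ and thus in $N$, while $\mu\neq\d_N$ keeps clause (7)(b) undisturbed. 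The extension relation is immediate.

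I expect the one delicate point, and the main obstacle, to be the \emph{choice of $\mu$}: one must be able to select $\mu$ that both admits a continuous strictly increasing interpolation between the $\dom(d_p)$-neighbours $\n^-,\n^+$ of $\n$ and avoids the finite set $\dom(f^p)$ on which $f^p$ might already disagree with the requirement $f^p(\mu)<f_n(\mu)$. This amounts to checking that the interval $\bigl(d_p(\n^-),\,d_p(\n^+)\bigr)$ of admissible values always leaves enough room to dodge $\dom(f^p)$ once the slots forced by strict monotonicity on $(\n^-,\n)$ and $(\n,\n^+)$ are reserved; the slack is extracted from the fact that the relevant bounds and the heights $\d_N$ are limit ordinals, so consecutive values of $d_p$ are separated by room of limit order type. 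With $\mu$ so chosen, the argument closes exactly as indicated, completing the density lemma and hence the list of facts needed for $\mtcl R_{f_n}$.
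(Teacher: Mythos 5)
The paper offers no argument for this lemma (it is declared straightforward alongside Lemmas \ref{d1'}--\ref{d3'}), so the comparison is with the evidently intended one-line construction: add a pair $(\n,\m)$ to $d_p$ and, if needed, a pair $(\m,\g)$ to $f^p$. Your verification of clauses (2) and (7) for such a $p^*$ is fine. The gap is exactly at the step you yourself flag as delicate: the choice of $\m$. The assertion that ``consecutive values of $d_p$ are separated by room of limit order type'' is not justified by anything in the definition of $\mtcl R_{f_n}$: clause (7) forces the heights $\d_N$ into $\dom(d_p)$ as fixed points, but $\dom(d_p)$ and $\range(d_p)$ may otherwise contain arbitrary (e.g.\ successor, even finite) ordinals. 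Concretely, if $\n^-<\n<\n^+$ with $\n^-,\n^+\in\dom(d_p)$, $\n^+=\n^-+k$ for finite $k$, and $d_p(\n^+)=d_p(\n^-)+k$, then strict monotonicity forces $\m=d_p(\n^-)+(\n-\n^-)$: the admissible set is a singleton and there is no room to dodge anything.

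This is not only a defect of your argument. If that unique $\m$ already lies in $\dom(f^p)$ with $f^p(\m)\geq f_n(\m)$, then, since extensions may only enlarge $f^p$, no extension of $p$ can put $\n$ into the domain of $d$ while respecting clause (3); and such conditions exist, e.g.\ $d_p=\{(5,10),(7,12)\}$, $f^p$ defined at $10,11,12$ with $f^p(11)=f_n(11)$ and with values below $f_n$ at $10$ and $12$, $X_p=\mtcl N_p=\emptyset$, $\n=6$ (clause (3) only constrains $f^p$ on $\range(d_p)=\{10,12\}$). So the statement as literally written fails for pathological conditions, and a correct treatment must either settle for what is actually used downstream --- that $\range(d_{p^*})$ is forced to be unbounded, for which it suffices to add a pair $(\x,\m)$ with $\x$ above $\max(\dom(d_p))\cup\{\n\}$ and $\m$ large, avoiding the finitely many points of $\dom(f^p)$ and chosen with $f_n(\m)>0$ --- or build into the definition of condition a clause (such as $f^p(\b)<f_n(\b)$ for all $\b\in\dom(f^p)$) that guarantees the needed slack. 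Your write-up correctly isolates the obstacle but then asserts it away with a claim that the definition does not support.
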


%\begin{corollary}
% $\MA_{1.5}(\mbox{stratified})$ implies that there is a function $f:\o_1\into \o_1$ such that
% \begin{enumerate}
% \item for every nonzero $\a<\o_2$ and every canonical function $g$ for $\a$, $f$ dominates $g$ on a club, and
 %\item $\{X\in [H(\o_2)]^{\al_0}\,:\, f(\d_X)=\ot(X\cap\o_2)\}$ is a projective stationary subset of $[H(\o_2)]^{\al_0}$.
% \end{enumerate}
%\end{corollary} 

It follows from the above density lemmas that if $G$ is $\mtcl R_{f_n}$-generic and we let $$D^G=\bigcup\{\range(d_p)\,:\, p\in G\},$$ $$f^G=\bigcup\{f^p\,:\,p\in G\},$$ and $$C^G_\a=\bigcup\{\range(h^p_\a)\,:\,p\in G,\,\a\in X_p\}$$ for each nonzero $\a<\o_2$, then $D^G$ is a club of $\o_1^V$, $f^G:\o_1^V\into \o_1^V$ is a function, each $C^G_\a$ is a club of $\o_1^V$, $\ot(e_\a``\n)<f^G(\n)$ for each $\a$ and $\n\in C^G_\a$, and $f^G(\n)<f_n(\n)$ for each $\n\in D^G$. It follows that an application of $\MM_{\al_2}(\al_2\mbox{-c.c.})$ to $\mtcl R_{f_n}$ will provide us with $f_{n+1}$. Hence, we just need to prove that $\mtcl R_{f_n}$ preserves stationary subsets of $\o_1$ and has the $\al_2$-c.c. This we will prove by means of the following version of Lemma \ref{Q-proper} in Section \ref{s2}.

\begin{lemma}\label{R-proper}
 Let $\t$ be a cardinal such that $\mtcl R_{f_n}\in H(\t)$ and let $M^0$, $M^1\prec H(\t)$ be countable models of the same height and such that $\mtcl R_{f_n}\in M^0\cap M^1$ and $M^0\cap H(\o_2)$, $M^1\cap H(\o_2)\in \mtcl K^{f_n}$. Then for every $p_0\in\mtcl R_{f_n}\cap M^0$ there is an extension $p^*\in \mtcl Q$ of $p_0$ such that $p^*$ is $(M^i, \mtcl R_{f_n})$-generic for $i=0$, $1$. 
\end{lemma}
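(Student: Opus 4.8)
The plan is to combine the two-model structure (with $M^0$, $M^1$ of equal height) of Lemma~\ref{Q-proper} with the amalgamation machinery developed for $\mtcl R$ in the proof of Lemma~\ref{strat-wcc}, the only genuinely new ingredients being the extra coordinate $d_p$ and the restriction to models in $\mtcl K^{f_n}$. Write $\d:=\d_{M^0}=\d_{M^1}$ and $\mtcl N=\{M^0\cap H(\o_2),M^1\cap H(\o_2)\}$; since $M^0$ and $M^1$ have the same height, $\mtcl N$ is trivially stratified, and $\mtcl N_{p_0}\cup\mtcl N$ is stratified as well because every $N\in\mtcl N_{p_0}\sub M^0$ has $\ot(N\cap\o_2)<\d$ (its order type is a countable ordinal lying in $M^0$). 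I would then take as candidate generic extension
$$p^*=\Big(f^{p_0}\cup\{(\d,\m)\},\ d_{p_0}\cup\{(\d,\d)\},\ (h^{p_0}_\a\cup\{(\d,\d)\}\,:\,\a\in X_{p_0}),\ \mtcl N_{p_0}\cup\mtcl N\Big),$$
where $\m=\max\{\ot(M^0\cap\o_2),\ot(M^1\cap\o_2)\}$.

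The first task is to check that $p^*$ is a condition extending $p_0$, and here the role of $\mtcl K^{f_n}$ becomes visible. At the new argument $\d$, clause~(7)(b) demands $f^{p^*}(\d)\geq\ot(M^i\cap\o_2)$ while clause~(3), together with $d_{p^*}(\d)=\d$, demands $f^{p^*}(\d)<f_n(\d)$; these are simultaneously met by $\m$ precisely because $M^0,M^1\in\mtcl K^{f_n}$ gives $\ot(M^i\cap\o_2)<f_n(\d)$. The remaining clauses go through as in Lemma~\ref{strat-wcc}: the restrictions $f^{p_0}\restr\d$, $d_{p_0}\restr\d$ and $h^{p_0}_\a\restr\d$ are finite subsets of $M^0\cap M^1$ (any countable ordinal belonging to $M^0$, resp.\ $M^1$, lies below $\d$, so all relevant values fall below $\d$ and hence into both models); $d_{p^*}$ and each $h^{p^*}_\a$ remain extendable to continuous strictly increasing functions since $\d$ is a countable limit with $d_{p^*}(\d)=h^{p^*}_\a(\d)=\d$ sitting above finitely many smaller values; and clause~(5)(b) holds at $\d$ because $\ot(e_\a``\d)=\ot(M^0\cap\a)<\ot(M^0\cap\o_2)\leq\m$ for every $\a\in X_{p_0}\sub M^0$.

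It then remains to show that $p^*$ is $(M^i,\mtcl R_{f_n})$-generic for $i=0,1$, the two cases being symmetric. Exactly as in the proof of Lemma~\ref{aleph15} I would write $M^i=\bigcup_{\n<\d}M^i_\n$ for a continuous $\in$-chain of countable models (available for the models arising in the intended applications, which may be taken of the form $N^{**}\cap H(\chi)$). Given $D\in M^i$ open dense and $p\leq p^*$ in $D$, I would pass to an initial segment $M^i_{\n_0}$ capturing $(f^p\cup d_p\cup\bigcup_\a h^p_\a)\cap M^i$ together with the heights of the models of $\mtcl N_p$ lying below $\d$, and, arguing there as in Lemma~\ref{strat-wcc}, find a condition $r\in D\cap M^i_{\n_0}$ agreeing with $p$ below $\d$ on $f$, $d$ and every $h_\a$, and satisfying the analogues of the reflection requirements~(1)--(3) in that proof (in particular $\mtcl N_p\cup\mtcl N_r$ is stratified). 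The amalgamation is then $p'=(f^{p'},d_{p'},(h^{p'}_\a\,:\,\a\in X_p\cup X_r),\mtcl N_p\cup\mtcl N_r)$ with $f^{p'}=f^r\cup(f^p\restr(\o_1\setminus\d))$, $d_{p'}=d_r\cup(d_p\restr(\o_1\setminus\d))$, and $h^{p'}_\a=h^p_\a\cup h^r_\a$ (adding $(\d_N,\d_N)$ for each $N\in\mtcl N_p$ with $\a\in N$ when $\a\in X_r\setminus X_p$, and using the one-sided value otherwise).

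The main obstacle is to verify that this amalgamation is a genuine $\mtcl R_{f_n}$-condition, and the only point beyond what Lemma~\ref{strat-wcc} already handles is the new coordinate $d$ and its coupling to $f_n$ through clause~(3). Extendability of $d_{p'}$ to a continuous strictly increasing function is immediate, since $d_r\sub\d\times\d$, $d_{p'}(\d)=\d$, and $d_p$ contributes only values $\geq\d$. For clause~(3): if $\n<\d$ then $d_{p'}(\n)=d_r(\n)<\d$, where $f^{p'}$ agrees with $f^r$, so the inequality holds because $r$ is a condition; if $\n\geq\d$ then $d_{p'}(\n)=d_p(\n)\geq\d$, where $f^{p'}$ agrees with $f^p$, so it holds because $p$ is a condition. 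Stratification of $\mtcl N_p\cup\mtcl N_r$ follows as in Lemma~\ref{strat-wcc} (its members lie in $\mtcl K^{f_n}$ by construction), using that $M^1\in\mtcl N_p$ shares the height $\d$ with $M^0$ and so imposes no stratification constraint on the lower models reflected into $M^i$. Once $p'$ is seen to be a condition it extends both $p$ and $r\in D\cap M^i$, which finishes the genericity argument and hence the lemma.
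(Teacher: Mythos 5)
Your proposal is correct and follows essentially the same route as the paper's proof: the same candidate condition $p^*$ (adding $(\d_{M^0},\m)$ to $f^{p_0}$, $(\d_{M^0},\d_{M^0})$ to $d_{p_0}$ and to each $h^{p_0}_\a$, and the two models to $\mtcl N_{p_0}$), followed by the same reflection-into-$M^i_{\n_0}$ and amalgamation argument borrowed from Lemmas \ref{aleph15} and \ref{strat-wcc}. Your explicit verification that $\m$ simultaneously satisfies clauses (3) and (7)(b) via $M^i\cap H(\o_2)\in\mtcl K^{f_n}$, and your check of clause (3) for the amalgamated condition, are exactly the points the paper leaves to the reader.
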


\begin{proof}
Let $\m=\max\{\ot(M^0\cap\o_2), \ot(M^1\cap\o_2)\}$ and $$p^*=(f^{p^*},  d_{p^*}, (h^{p^*}_\a\,:\,\a\in X_{p_0}), \mtcl N_{p_0}\cup\{M^0\cap H(\o_2), M^1\cap H(\o_2)\}),$$ where 
\begin{enumerate}
\item $f^{p^*}=f^{p_0}\cup\{(\d_{M^0}, \m)\}$,
\item $d_{p^*}=d_{p_0}\cup\{(\d_{M^0}, \d_{M^0})\}$, and
\item $h^{p^*}_\a=h^{p_0}_\a\cup\{(\d_{M^0}, \d_{M^0})\}$ for each $\a\in X_{p_0}$.
\end{enumerate}

Using the fact that $M^0\cap H(\o_2)$, $M^1\cap H(\o_2)\in\mtcl K^{f_n}$, it is immediate to check that $p^*\in\mtcl R_{f_n}$, and it of course extends $p_0$. It will thus suffice to show that $p^*$ is $(M^i, \mtcl R_{f_n})$-generic for $i=0$, $1$. For this, let $D\in M^i$ be a dense and open subset of $\mtcl R_{f_n}$ and let $p\in D$ be an extension of $p^*$. We will show that there is a condition $r\in D\cap M^i$ compatible with $p$.

We can find $r$ by arguing, in $M^i$, in the same way as in the reflection argument in the proof of Lemma \ref{strat-wcc}. More specifically, and exactly as in that proof, we may assume that $M^i=\bigcup_{\n<\d_{M^i}}M^i_\n$, where $(M^i_\n)_{\n<\d_{M^i}}$ is a $\sub$-continuous $\in$-chain of models. Then, by moving to a suitable $M^i_{\n_0}$ and arguing there as in the proof of Lemma \ref{aleph15} using the stratification of $\mtcl N_p$, we may find a condition $r\in D\cap M^i_{\n_0}$ such that 
\begin{enumerate}
\item for every $\a\in X_p\cap X_r$, $h^p_\a\cup h^r_\a$ can be extended to a strictly increasing and continuous function $h:\o_1\into \o_1$,
\item for every $\a\in X_r\cap X_q$ and every $N\in\mtcl N_p$ such that $\d_N<\d_{M^i}$, $\a\notin N$, and 
\item $\mtcl N_p\cup\mtcl N_r$ is stratified.
\end{enumerate}

Let us define $h^{p'}_\a$, for $\a\in X_p\cup X_r$, as follows:

\begin{enumerate}
\item for every $\a\in X_p\cap X_r$, $h^{p'}_\a=h^p_\a\cup h^r_\a$;
\item for every $\a\in X_p\setminus X_r$, $h^{p'}_\a=h^p_\a$;
\item for every $\a\in X_r\setminus X_p$, $h^{p'}_\a=h^r_\a\cup\{(\d_N, \d_N)\,:\, N\in\mtcl N_p,\,\a\in N\}$.
\end{enumerate}

We then have that $$p'=(f^r\cup (f^p\restriction (\o_1\setminus\d_{M^i})), d_p\cup d_r, (h^{p'}_\a\,:\,\a\in X_p\cup X_r), \mtcl N_p\cup\mtcl N_r)$$ is a common extension in $\mtcl R_{f_n}$ of $p$ and $r$, which finishes the proof of the lemma. 
\end{proof}

\begin{lemma}\label{proj-stat-wcc} $\mtcl K^{f_n}$ is projective stationary. \end{lemma}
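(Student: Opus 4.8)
The plan is to exploit the defining property of $f_n$: for every nonzero $\a<\o_2$, $f_n$ dominates, on a club, every canonical function for $\a$. (This holds for $f_0=f$ by Theorem \ref{wcc}, and is maintained at each step of the construction, since the generic $f^G$ added by $\mtcl R_{f_{m}}$ satisfies $\ot(e_\a``\n)<f^G(\n)$ on the club $C^G_\a$ for each $\a$.) So fix a cardinal $\t$ with $\mtcl R_{f_n}\in H(\t)$, a stationary $S\sub\o_1$, and a club $D\sub [H(\t)]^{\al_0}$; I must produce $N\in D$ with $N\cap H(\o_2)\in\mtcl K^{f_n}$ and $\d_N\in S$.

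First I would build a continuous increasing $\in$-chain $(N_\x)_{\x<\o_1}$ of countable elementary submodels of $H(\t)$, each lying in $D$ and containing $\vec e$, $f_n$, $S$ and $D$, and with $\d_{N_\x}=N_\x\cap\o_1$ continuous and increasing. Set $M=\bigcup_{\x<\o_1}N_\x$; then $\o_1\sub M$, $|M|=\al_1$, and $\g:=\ot(M\cap\o_2)$ satisfies $\o_1\leq\g<\o_2$. Writing $A_\x=N_\x\cap\o_2$, the sequence $(A_\x)_{\x<\o_1}$ is a continuous increasing chain of countable sets with union $M\cap\o_2$, so $\x\mapsto\ot(A_\x)$ is continuous, increasing and cofinal in $\g$.

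The heart of the argument is to recognise $\x\mapsto\ot(N_\x\cap\o_2)$, reparametrised by $\d_{N_\x}$, as a canonical function for $\g$ modulo clubs. For this I would produce, by a routine bookkeeping construction along the chain, a surjection $\s:\o_1\into M\cap\o_2$ such that $\s``\d_{N_\x}=N_\x\cap\o_2$ for every $\x$ in some club $C_0\sub\o_1$. Composing $\s$ with the increasing enumeration of $M\cap\o_2$ in order type $\g$ shows that $g(\n):=\ot(\s``\n)$ is a canonical function for $\g$, while by construction $g(\d_{N_\x})=\ot(N_\x\cap\o_2)$ for all $\x\in C_0$. Since $f_n$ dominates $g$ on a club $E$, we get $\ot(N_\x\cap\o_2)=g(\d_{N_\x})<f_n(\d_{N_\x})$ for every $\x\in C_0$ with $\d_{N_\x}\in E$; for any such $\x$ this means precisely that $N_\x\cap H(\o_2)\in\mtcl K^{f_n}$.

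Finally, the set of $\d_{N_\x}$ with $\x\in C_0$ and $\d_{N_\x}\in E$ contains a club of $\o_1$, so it meets the stationary set $S$; choosing $\x$ with $\d_{N_\x}\in S$ as well yields $N_\x\in D$ with $N_\x\cap H(\o_2)\in\mtcl K^{f_n}$ and $\d_{N_\x}\in S$, as required. I expect the main obstacle to be exactly the middle step: the naive computation gives only $\ot(N\cap\o_2)=\ssup_{\b\in N\cap\o_2}\ot(N\cap\b)=\ssup_{\b\in N\cap\o_2}g_\b(\d_N)$, a supremum of countably many values each below $f_n(\d_N)$ which need not itself be below $f_n(\d_N)$. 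Passing instead to the single canonical function for $\g=\ot(M\cap\o_2)$, which is available because domination is assumed for \emph{every} nonzero ordinal below $\o_2$ and not just for ordinals lying in the models, is what circumvents this difficulty.
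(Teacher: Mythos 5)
Your proof is correct, but it takes a genuinely different route from the paper's. The paper avoids the $\o_1$-chain and the canonical function for an ordinal of size $\al_1$ altogether: it fixes a single ordinal $\a\in S^2_0$ above $\o_1$ closed under a function $F:[\o_2]^{{<}\o}\into\o_2$ projecting the club $D$, takes one countable $N\elsub H(\chi)$ with $\d_N\in S$ containing the relevant parameters, and sets $X=N\cap\a$. The strict inequality $\ot(X)<f_n(\d_N)$ is then obtained by truncation: for each nonzero $\b\in N\cap\o_2$, elementarity puts a club witnessing the domination of the canonical function for $\b$ inside $N$, so $\ot(N\cap\b)=\ot(e_\b``\d_N)<f_n(\d_N)$, whence $\ot(N\cap\o_2)\leq f_n(\d_N)$ and $\ot(X)=\ot(N\cap\a)<\ot(N\cap\o_2)\leq f_n(\d_N)$. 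So the obstacle you correctly identify at the end --- that the supremum over $\b\in N\cap\o_2$ of countably many values below $f_n(\d_N)$ need not itself stay strictly below $f_n(\d_N)$ --- is handled in the paper by cutting the model at $\a$, rather than by passing to the canonical function for $\g=\ot(M\cap\o_2)$ as you do. Your route produces a member of the club $D$ directly (no reduction to closure under a finitary function is needed to recover a model from the trace on $\o_2$), at the cost of building the continuous chain and invoking domination at an ordinal of cardinality $\al_1$ that lies in no countable model of the construction; the paper's route is shorter and uses domination only at ordinals inside the single countable model, but needs the standard projection of $D$ to $[\o_2]^{\al_0}$ to turn the truncated set $X$ back into a member of $D$. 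Both arguments are sound.
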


\begin{proof} 
Given a cardinal $\t\geq\o_2$, a function $F:[\o_2]^{{<}\o}\into \o_2$, and a stationary set $S\sub\o_1$, it is enough to show that there is a countable $X\sub\o_2$ closed under $F$ such that $\d:=X\cap\o_1\in S$ and such that $f_n(\d)>\ot(X)$. 

In order to find such an $X$ we first pick $\a\in S^2_0$ above $\o_1$ such that $F``[\a]^{{<}\o}\sub \a$. We then let $N$ be a countable elementary submodel of some larger $H(\chi)$ containing $F$, $\a$, $f_n$, and $\vec f$ and such that $\d_N\in S$, and let $X=N\cap\a$. Then $F``[X]^{{<}\o}\sub X$, $X\cap\o_1=\d_N\in S$, and $f_n(\d_N)\geq \ot(N\cap \o_2)>\ot(N\cap \a)=\ot(X)$, where the first inequality follows from the fact that for every nonzero $\b\in N\cap \o_2$ there is a club $C\in N$ of $\o_1$ such that $\ot(e_\b``\n)<f_n(\n)$ for all $\n\in C$, which implies that $\ot(N\cap\b)=\ot(e_\b``\d)<f_n(\d)$ as $\d\in C$. \end{proof}

We now have the following corollary from Lemmas  \ref{R-proper} and \ref{proj-stat-wcc}.

\begin{corollary}\label{cor000}
$\mtcl R_{f_n}$ is proper with respect to $\mtcl K^{f_n}$ and therefore it preserves stationary subsets of $\o_1$.\end{corollary}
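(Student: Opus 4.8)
The plan is to read off both assertions as direct consequences of Lemmas \ref{R-proper} and \ref{proj-stat-wcc}, in exactly the manner in which Corollary \ref{Q-proper-cor} was derived from Lemma \ref{Q-proper} in Section \ref{s2}.

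First, to see that $\mtcl R_{f_n}$ is proper with respect to $\mtcl K^{f_n}$, I would simply unwind the definition of properness with respect to a class of models. So fix a cardinal $\t$ with $\mtcl R_{f_n}\in H(\t)$ and let $D$ be the club of those countable $N\prec H(\t)$ with $\mtcl R_{f_n}$, $\vec e$, $f_n\in N$. The key point is that Lemma \ref{R-proper}, although phrased for two models $M^0$ and $M^1$, applies in particular to the degenerate case $M^0=M^1=N$: a single model trivially has the same height as itself, and the remaining hypothesis then reduces to $N\cap H(\o_2)\in\mtcl K^{f_n}$. Consequently, for every $N\in D$ with $N\cap H(\o_2)\in\mtcl K^{f_n}$ and every $p_0\in\mtcl R_{f_n}\cap N$, Lemma \ref{R-proper} yields an extension $p^*$ of $p_0$ which is $(N, \mtcl R_{f_n})$-generic. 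This is precisely what the definition of properness with respect to $\mtcl K^{f_n}$ demands.

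Second, for the preservation of stationary subsets of $\o_1$ I would appeal to Proposition \ref{proj-stat}. By Lemma \ref{proj-stat-wcc}, $\mtcl K^{f_n}\cap [H(\t)]^{\al_0}$ is projective stationary in $[H(\t)]^{\al_0}$ for every relevant $\t$, which is exactly the hypothesis Proposition \ref{proj-stat} imposes on the class $\mtcl K$. Feeding this together with the properness with respect to $\mtcl K^{f_n}$ just established into Proposition \ref{proj-stat} gives at once that $\mtcl R_{f_n}$ preserves stationary subsets of $\o_1$, completing the argument.

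Since both halves are immediate applications of results already proved, I do not expect a genuine obstacle. The only points requiring attention are purely bookkeeping: confirming that the collapse to $M^0=M^1$ in Lemma \ref{R-proper} produces exactly the single-model genericity appearing in the definition of properness with respect to $\mtcl K^{f_n}$ (in particular that $\{N\}$ is trivially a stratified family of members of $\mtcl K^{f_n}$ once $N\cap H(\o_2)\in\mtcl K^{f_n}$), and that the projective stationarity verified in Lemma \ref{proj-stat-wcc}, whose proof operates at the level of $H(\o_2)$, is correctly read as holding inside every $[H(\t)]^{\al_0}$ with $\mtcl R_{f_n}\in H(\t)$, as Proposition \ref{proj-stat} requires.
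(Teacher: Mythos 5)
Your proposal is correct and is exactly the argument the paper intends: Corollary \ref{cor000} is stated as an immediate consequence of Lemma \ref{R-proper} (applied in the degenerate case of a single model to get properness with respect to $\mtcl K^{f_n}$) together with Lemma \ref{proj-stat-wcc} and Proposition \ref{proj-stat} for the preservation of stationary subsets of $\o_1$. The bookkeeping points you flag (a singleton is trivially a stratified family, and the projective stationarity transfers to $[H(\t)]^{\al_0}$ for every relevant $\t$) are exactly the right ones to check and present no difficulty.
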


We also have the following corollary from Lemma \ref{R-proper}.

\begin{lemma}\label{al2wcc}
$\mtcl R_{f_n}$ has the $\al_2$-c.c.
\end{lemma}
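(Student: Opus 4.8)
The plan is to argue exactly as in the proof of Lemma \ref{Q-cc} in Section \ref{s2}, now substituting Lemma \ref{R-proper} for Lemma \ref{Q-proper}. I would assume towards a contradiction that $A=(q_i\,:\,i<\l)$ is a one-to-one enumeration of a maximal antichain of $\mtcl R_{f_n}$ of size $\l\geq\o_2$. Fixing a large enough cardinal $\t$, I would first use Lemma \ref{proj-stat-wcc} (the projective stationarity, hence in particular stationarity, of $\mtcl K^{f_n}$) to choose, for each $i<\o_2$, a countable $M_i\prec H(\t)$ with $M_i\cap H(\o_2)\in\mtcl K^{f_n}$ and $q_i$, $\mtcl R_{f_n}$, $f_n$, $\vec e$, $A\in M_i$. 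Then I would take $P\prec H(\chi)$ for some $\chi>\t$ with $\o_1\sub P$, $|P|=\al_1$, and $f_n$, $((q_i, M_i)\,:\,i<\l)\in P$. Since the $q_i$ are pairwise distinct and $\l\geq\o_2>\al_1=|P|$, there is some $i_0<\l$ with $q_{i_0}\notin P$.

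The key reflection step is to find a same-height partner inside $P$. Because $\o_1\sub P$, we have $\d_{M_{i_0}}=M_{i_0}\cap\o_1\in P$. I would then observe that the statement ``there are $i<\l$ and a countable $M\prec H(\t)$ with $M\cap H(\o_2)\in\mtcl K^{f_n}$, $\d_M=\d_{M_{i_0}}$, and $q_i$, $\mtcl R_{f_n}$, $f_n$, $\vec e$, $A\in M$'' is true in $H(\chi)$—witnessed by $i_0$ and $M_{i_0}$—and can be expressed with the parameters $\d_{M_{i_0}}$, $f_n$, $\vec e$, $A$, $((q_i, M_i)\,:\,i<\l)$, all of which lie in $P$. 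By elementarity I obtain $i_1\in P\cap\l$ with $\d_{M_{i_1}}=\d_{M_{i_0}}$; and since $((q_i, M_i)\,:\,i<\l)\in P$, the associated model $M_{i_1}$ is in $P$ as well, so $M_{i_1}\sub P$ as it is countable.

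At this point $M_{i_0}$ and $M_{i_1}$ are countable elementary submodels of $H(\t)$ of the same height, both meeting $H(\o_2)$ in members of $\mtcl K^{f_n}$, with $\mtcl R_{f_n}\in M_{i_0}\cap M_{i_1}$. I would apply Lemma \ref{R-proper} with $M^0=M_{i_0}$ and $M^1=M_{i_1}$ to the condition $q_{i_0}\in\mtcl R_{f_n}\cap M_{i_0}$, obtaining an extension $q^*$ of $q_{i_0}$ which is $(M_{i_1}, \mtcl R_{f_n})$-generic. Since $A\in M_{i_1}$ is a maximal antichain, the set $\{q\,:\,q\leq q'\mbox{ for some }q'\in A\}$ is a dense open subset of $\mtcl R_{f_n}$ lying in $M_{i_1}$, so genericity of $q^*$ yields some $q_{i_2}\in A\cap M_{i_1}$ compatible with $q^*$. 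As $q_{i_2}\in M_{i_1}\sub P$ while $q_{i_0}\notin P$, we have $q_{i_2}\neq q_{i_0}$; yet a common extension of $q^*$ and $q_{i_2}$ refines both $q_{i_0}$ and $q_{i_2}$, contradicting that these are distinct members of the antichain $A$.

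I do not expect a genuine obstacle here: once Lemma \ref{R-proper} is available the argument is routine. The only point requiring care—and the respect in which this is even simpler than Lemma \ref{Q-cc}—is that the properness lemma for $\mtcl R_{f_n}$ demands of the two-model family only that $M^0$ and $M^1$ have the same height and lie in $\mtcl K^{f_n}$, with no $\vec{\mtcl C}$-stratification clause to reflect. Consequently, locating a same-height partner $i_1\in P$ whose model is in $\mtcl K^{f_n}$ is all that the reflection step must deliver, and that is guaranteed by $\d_{M_{i_0}}\in P$ together with the expressibility of $\mtcl K^{f_n}$-membership over $(H(\o_2); \in, \vec e, f_n)$ with parameters in $P$.
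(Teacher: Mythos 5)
Your proposal is correct and follows essentially the same route as the paper's own proof: reflect the height of $M_{i_0}$ into an $\aleph_1$-sized elementary submodel $P$ to find a same-height partner $M_{i_1}\in P$, apply Lemma \ref{R-proper} to get an $(M_{i_1},\mtcl R_{f_n})$-generic extension of $q_{i_0}$, and derive the contradiction from maximality of the antichain. The extra details you supply (using Lemma \ref{proj-stat-wcc} to obtain the models $M_i$, arranging $\o_1\sub P$ so that $\d_{M_{i_0}}\in P$) are correct elaborations of steps the paper leaves implicit.
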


\begin{proof}
This is similar to the proof of Lemma \ref{Q-cc}. Suppose $(p_i\,;\,i<\l)$, for some cardinal $\l\geq\o_2$, is a one-to-one enumeration of a maximal antichain $A$ of $\mtcl R_{f_n}$. Let $\t$ be a large enough cardinal and for every $i<\o_2$ let $M_i$ be a countable elementary submodel of $H(\t)$ such that $p_i$, $A\in M_i$ and such that $M_i\cap H(\o_2)\in \mtcl K^{f_n}$. 

Let $P$ be an elementary submodel of some higher $H(\chi)$ such that $|P|=\al_1$ and $((p_i, M_i)\,:\,i<\l)\in P$. We may then find $i_0$ such that $p_{i_0}\notin  P$. Working in $P$, we may find $i_1\in P\cap\l$ such that $\d_{M_{i_0}}=\d_{M_{i_1}}$. By Lemma \ref{R-proper} there is a condition $p^*\in \mtcl R_{f_n}$ extending $p_{i_0}$ and such that $p^*$ is $(M_{i_1}, \mtcl R_{f_n})$-generic. Since $A\in M_{i_1}$ is a maximal antichain of $\mtcl R_{f_n}$, we can find a common extension $p'$ of $p^*$ and some $p_{i_2}\in A\cap M_{i_1}$, which is a contradiction since $p_{i_2}\neq p_{i_0}$ yet $p'$ extends both $p_{i_0}$ and $p_{i_2}$.
\end{proof}

Lemmas \ref{cor000} and \ref{al2wcc} complete our second proof of Theorem \ref{mainthm}.

\end{document}